\title{Regularity partitions and the topology of graphons}
\author{L\'aszl\'o Lov\'asz\footnote{Research supported by OTKA grant No.~67867 and ERC Advanced Research Grant No.~227701}\\
Institute of Mathematics\\
E\"otvos Lor\'and University\\
and\\
Bal\'azs Szegedy\\
Department of Mathematics\\
University of Toronto}
\newtheorem{theorem}{Theorem}[section]
\newtheorem{prop}[theorem]{Proposition}
\newtheorem{lemma}[theorem]{Lemma}
\newtheorem{claim}[theorem]{Claim}
\newtheorem{corollary}[theorem]{Corollary}
\newtheorem{example}{Example}
\newtheorem{conj}[theorem]{Conjecture}
\newtheorem{remark}[theorem]{Remark}
\newenvironment{proof}{\medskip\noindent{\bf Proof. }}{\hfill$\square$\medskip}
\newenvironment{proof*}[1]{\medskip\noindent{\bf Proof of #1.}}{\hfill$\square$\medskip}
\long\def\killtext#1{}
\begin{document}
\addtolength{\baselineskip}{3pt} \setlength{\oddsidemargin}{0.2in}

\def\hom{{\rm hom}}
\def\iso{{\rm iso}}
\def\inj{{\rm inj}}
\def\surj{{\rm sur}}
\def\Inj{{\rm Inj}}
\def\neighb{{\rm neighb}}
\def\inj{{\rm inj}}
\def\ind{{\rm ind}}
\def\sur{{\rm sur}}
\def\PAG{{\rm PAG}}
\def\mul{{\rm mult}}
\def\flat{{\rm flat}}
\def\supp{{\rm supp}}
\def\tbp{t^{\sf b}}

\def\eps{\varepsilon}
\def\thet{\vartheta}

\def\CUT{\text{\rm CUT}}
\def\IO{{\infty\to1}}
\def\GR{\text{\rm GR}}
\def\CLQ{\text{\rm CLQ}}
\def\LT{{\text{\rm LEFT}}}
\def\RT{{\text{\rm RIGHT}}}
\def\Haus{{\rm Hf}}

\def\Fi{\mathbf{\Phi}}

\def\maxcut{{\sf maxcut}}
\def\MaxCut{{\sf MaxCut}}
\def\id{{\rm id}}
\def\irreg{{\rm irreg}}

\def\tv{{\rm tv}}
\def\tr{{\rm tr}}
\def\cost{\hbox{\rm cost}}
\def\val{\hbox{\rm val}}
\def\rk{{\rm rk}}
\def\diam{{\rm diam}}
\def\Ker{{\rm Ker}}
\def\simi{{\rm sim}}

\def\eul{\text{\sf eul}}
\def\Eu{\text{\sf euln}}
\def\Tu{\text{\sf tut}}
\def\Cr{\text{\sf chr}}
\def\Fl{\text{\sf flo}}
\def\Fs{\text{\sf f}}
\def\Ham{\text{\sf ham}}
\def\Pmg{\text{\sf pmg}}
\def\Ecp{\text{\sf ecp}}
\def\Mch{\text{\sf match}}
\def\Edge{\text{\sf edge}}
\def\Subg{\text{\sf subg}}
\def\Loop{\text{\sf loop}}
\def\Sflow{\text{\sf sflo}}

\def\Pr{{\sf P}}
\def\E{{\sf E}}
\def\Var{{\sf Var}}
\def\Ent{{\sf Ent}}
\def\PD{{\sf Pd}}
\def\SYM{{\sf Sym}}

\def\T{{^\top}}
\def\wt{\widetilde}
\def\wh{\widehat}

\def\AA{\mathcal{A}}\def\BB{\mathcal{B}}\def\CC{\mathcal{C}}
\def\DD{\mathcal{D}}\def\EE{\mathcal{E}}\def\FF{\mathcal{F}}
\def\GG{\mathcal{G}}\def\HH{\mathcal{H}}\def\II{\mathcal{I}}
\def\JJ{\mathcal{J}}\def\KK{\mathcal{K}}\def\LL{\mathcal{L}}
\def\MM{\mathcal{M}}\def\NN{\mathcal{N}}\def\OO{\mathcal{O}}
\def\PP{\mathcal{P}}\def\QQ{\mathcal{Q}}\def\RR{\mathcal{R}}
\def\SS{\mathcal{S}}\def\TT{\mathcal{T}}\def\UU{\mathcal{U}}
\def\VV{\mathcal{V}}\def\WW{\mathcal{W}}\def\XX{\mathcal{X}}
\def\YY{\mathcal{Y}}\def\ZZ{\mathcal{Z}}

\def\Ab{\mathbf{A}}\def\Bb{\mathbf{B}}\def\Cb{\mathbf{C}}
\def\Db{\mathbf{D}}\def\Eb{\mathbf{E}}\def\Fb{\mathbf{F}}
\def\Gb{\mathbf{G}}\def\Hb{\mathbf{H}}\def\Ib{\mathbf{I}}
\def\Jb{\mathbf{J}}\def\Kb{\mathbf{K}}\def\Lb{\mathbf{L}}
\def\Mb{\mathbf{M}}\def\Nb{\mathbf{N}}\def\Ob{\mathbf{O}}
\def\Pb{\mathbf{P}}\def\Qb{\mathbf{Q}}\def\Rb{\mathbf{R}}
\def\Sb{\mathbf{S}}\def\Tb{\mathbf{T}}\def\Ub{\mathbf{U}}
\def\Vb{\mathbf{V}}\def\Wb{\mathbf{W}}\def\Xb{\mathbf{X}}
\def\Yb{\mathbf{Y}}\def\Zb{\mathbf{Z}}

\def\ab{\mathbf{a}}\def\bb{\mathbf{b}}\def\cb{\mathbf{c}}
\def\db{\mathbf{d}}\def\eb{\mathbf{e}}\def\fb{\mathbf{f}}
\def\gb{\mathbf{g}}\def\hb{\mathbf{h}}\def\ib{\mathbf{i}}
\def\jb{\mathbf{j}}\def\kb{\mathbf{k}}\def\lb{\mathbf{l}}
\def\mb{\mathbf{m}}\def\nb{\mathbf{n}}\def\ob{\mathbf{o}}
\def\pb{\mathbf{p}}\def\qb{\mathbf{q}}\def\rb{\mathbf{r}}
\def\sb{\mathbf{s}}\def\tb{\mathbf{t}}\def\ub{\mathbf{u}}
\def\vb{\mathbf{v}}\def\wb{\mathbf{w}}\def\xb{\mathbf{x}}
\def\yb{\mathbf{y}}\def\zb{\mathbf{z}}

\def\Abb{\mathbb{A}}\def\Bbb{\mathbb{B}}\def\Cbb{\mathbb{C}}
\def\Dbb{\mathbb{D}}\def\Ebb{\mathbb{E}}\def\Fbb{\mathbb{F}}
\def\Gbb{\mathbb{G}}\def\Hbb{\mathbb{H}}\def\Ibb{\mathbb{I}}
\def\Jbb{\mathbb{J}}\def\Kbb{\mathbb{K}}\def\Lbb{\mathbb{L}}
\def\Mbb{\mathbb{M}}\def\Nbb{\mathbb{N}}\def\Obb{\mathbb{O}}
\def\Pbb{\mathbb{P}}\def\Qbb{\mathbb{Q}}\def\Rbb{\mathbb{R}}
\def\Sbb{\mathbb{S}}\def\Tbb{\mathbb{T}}\def\Ubb{\mathbb{U}}
\def\Vbb{\mathbb{V}}\def\Wbb{\mathbb{W}}\def\Xbb{\mathbb{X}}
\def\Ybb{\mathbb{Y}}\def\Zbb{\mathbb{Z}}

\def\R{{\mathbb R}}
\def\Q{{\mathbb Q}}
\def\Z{{\mathbb Z}}
\def\N{{\mathbb N}}
\def\C{{\mathbb C}}
\def\U{{\mathbb U}}
\def\Ge{{\mathbb G}}
\def\Ha{{\mathbb H}}

\def\one{{\sf\bf 1}}

\maketitle

\tableofcontents

\begin{abstract}
We highlight a topological aspect of the graph limit theory. Graphons
are limit objects for convergent sequences of dense graphs. We
introduce the representation of a graphon on a unique metric space
and we relate the dimension of this metric space to the size of
regularity partitions. We prove that if a graphon has an excluded
induced sub-bigraph then the underlying metric space is compact and
has finite packing dimension. It implies in particular that such
graphons have regularity partitions of polynomial size.
\end{abstract}

\section{Introduction}

One can define convergence of a growing graph sequence
\cite{BCLSSV,BCLSV0,BCLSV1}, and construct a limit object to such a
sequence \cite{LSz1} in the form of a symmetric measurable function
$W:~J\times J\to[0,1]$, where $J$ is any probability space (one may
assume here that $J=[0,1]$ with the Lebesgue measure, but this is not
always convenient). We call the pair $(J,W)$ a {\it graphon}.

The goal of this paper is to show that one can introduce also a
topology on $J$ (in fact, a metric), and that topological properties
of this space are related to combinatorial properties of the graphon
(or of the graphs whose limit it represents). A related metric was
introduced in \cite{LSz3}, and the topology on $J$ was used in
\cite{LSz6}.

The theory of graph limits is tied to the Regularity Lemma of
Szemer\'edi \cite{Szem1,Szem2} in several ways. In \cite{LSz3} it was
shown that the Regularity Lemma is equivalent to the compactness of
the space of graphons in an appropriate metric, and also to a
``dimensionality'' of particular graphons. This paper relates to the
latter result.

The metric in question is simply the $L_1$ metric on functions
$W(x,.)$, $x\in J$. This metric itself can be weird (it may not even
be defined on all points of $J$). We show in Section \ref{TOPGRAPH}
that that every graphon is ``equivalent'' (technically: weakly
isomorphic, see the end of Section \ref{PRELIM}) to a graphon $(J,W)$
with special properties: $J$ is a complete separable metric space,
and the probability measure on $J$ has full support. We call such
graphons {\it pure}. We also prove that the pure version of a graphon
is uniquely determined up to changing the function $W$ on a $0$-set
in each row. We define another metric in which $J$ is compact, and
characterize the cases when the two define the same topology. We
prove that several important functions defined on $J$ are continuous
in this topology, which shows that it is indeed the ``right''
topology to define on $J$.

In Section \ref{THIN} we show that topological properties of pure
graphons are related to their graph-theoretic properties. Our main
result states tha {\it if we exclude any bipartite graph from the
graphon, then $J$ must be compact and finite dimensional.}

In \cite{LSz3} it was shown that weak regularity partitions of a
graphon $(J,W)$ (which generalize weak regularity partitions of
graphs in a natural way) correspond to covering $J$ with sets of
small diameter. In Section \ref{REGPART} we give a stronger and
cleaner version of this result. Combined with the results in Section
\ref{THIN}, we obtain the following fact: If a graph does not contain
a fixed bipartite graph $F$ as an induced sub-bigraph, then it has
polynomial size strong regularity partitions (in the error bound
$\eps$).

A motivation for our paper comes from extremal combinatorics. In
\cite{LSz6} we study the structure of graphons that arise as unique
solutions of extremal problems involving the densities of finitely
many subgraphs (we call such graphons {\it finitely forcible}). Such
graphons come up naturally in extremal graph theory. Quite
interestingly, all the examples of finitely forcible graphons
produced in \cite{LSz6} have a compact and finite dimensional
underlying metric space. The question arises wether every extremal
problem (involving a finite number of subgraph densities) has a
solution of this type.

Finally we mention that graph limit theory has a close connection to
the theory of dynamical systems. Probability spaces with measure
preserving actions can often be endowed by a natural topology in
which the action is continuous. The corresponding theory is called
topological dynamics. Informally speaking, we can say that the
relationship between graphons and topological graphons is similar to
the relationship between dynamics and topological dynamics.

\section{Preliminaries}\label{PRELIM}

We make a technical but useful distinction between bipartite graphs
and bigraphs. A {\it bipartite graph} is a graph $(V,E)$ whose node
set has a partition into two classes such that all edges connect
nodes in different classes. A {\it bigraph} is a triple $(U_1,U_2,E)$
where $U_1$ and $U_2$ are finite sets and $E\subseteq U_1\times U_2$.
So a bipartite graph becomes a bigraph if we fix a bipartition and
specify which bipartition class is first and second. On the other
hand, if $F=(V,E)$ is a graph, then $(V,V,E')$ is an associated
bigraph, where $E'=\{(x,y):~xy\in E\}$. This bigraph is obtained from
$F$ by a standard construction of doubling the nodes.

If $G=(V,E)$ is a graph, then an {\it induced sub-bigraph} of $G$ is
determined by two subsets $S,T\subseteq V$, and its edge set consists
of those pairs $(x,y)\in S\times T$ for which $xy\in E$ (so this is
an induced subgraph of the bigraph associated with $G$).

Let $J_i=(\Omega_i,\AA_i,\pi_i)$ ($i=1,2$) be (standard) probability
spaces. A measurable function $W:~J_1\times J_2 \to [0,1]$ is called
a {\it bigraphon}. A {\it graphon} is a special bigraphon where
$J_1=J_2=J$ and $W$ is symmetric: $W(x,y)=W(y,x)$ for all $x,y\in J$.

For a fixed probability space $J$, graphons can be considered as
elements of the space $L_\infty(J\times J)$. The norm that it most
important in their is study is, however, not the $L_\infty$ norm, but
the {\it cut-norm}, defined by
\[
\|W\|_\square=\sup_{S,T\subseteq J}\Bigl|\int\limits_{S\times T}
W(x,y)\,dx\,dy\Bigr|.
\]
We will also use the $L_1$ norm
\[
\|W\|_1=\int\limits_{J\times J} |W(x,y)|\,dx\,dy.
\]

A graphon $(J,W)$ is called a {\it stepfunction}, if there is a
partition of $J$ into a finite number of measurable sets
$S_1,\dots,S_n$ so that $W$ is constant on every $S_i\times S_j$. The
partition classes will be called the {\it steps} of the stepfunction.

Every graph $F=(V,E)$ can be considered as a graphon, if we consider
$V$ as a finite probability space with the uniform measure, and $E$,
as the indicator function of adjacency. We can resolve the atoms into
intervals of length $1/|V|$, to get a graphon $([0,1],W_F)$ (which is
a stepfunction). More explicitly, we split $[0,1]$ in $|V|$ equal
intervals $L_i$, and define $W_F(x,y)=E(i,j)$ for $i x\in L_i$ and
$y\in L_j$. This graphon is weakly isomorphic to $(V,E)$ (see below).

In a similar way, every bigraph can be considered as a finite
bigraphon, and defines a bigraphon $([0,1],[0,1],W_F)$.

\begin{remark}\label{REM:DIGRAPH}
We could consider the version of this notion where $J_1=J_2$ but $W$
is not necessarily symmetric. Such a structure arises as the limit
object of a convergent sequence of directed graphs with no parallel
edges, and therefore can be called a {\it digraphon}. We do not need
them in this paper.
\end{remark}

Every bigraphon $(J_1,J_2,W)$ can be considered as a linear kernel
operator $L_1(J_1)\to L_\infty(J_2)$, defined by
\[
f\mapsto \int\limits_J W(.,y)f(y)\,dy.
\]
Of course, this operator reamin well-defined if we increase the
subscript in $L_1$ in the domain and lower the subscript in
$L_\infty$ in the range. In the case of a graphon $(J,W)$, it is
useful to consider it as an operator $L_2(J)\to L_2(J)$, since it is
then a Hilbert-Schmidt operator, and a rich theory is applicable. In
particular, we know that it has a discrete spectrum.

If $(J_1,J_2,U)$ and $(J_2,J_3,W)$ are two bigraphons, we can define
their {\it operator product} $(J_1,J_3,U\circ W)$ by
\[
(U\circ W)(x,y)=\int\limits_{J_2} U(x,z)W(z,y)\,dz.
\]
(We will write $dz$ instead of $d\pi_2(z)$, where $\pi_2$ is the
measure on $J_2$: integrating over $J_2$ means that we integrate with
respect to the probability measure of $J_2$.)

The notion of the density of a graph in a graphon has been introduced
in \cite{FLS}. Here we need several versions, which unfortunately
leads to some messy notation. For a graphon $(J,W)$ and graph
$F=(V,E)$, we associate a variable $x_v\in J$ with every node $v\in
V$, and define
\[
t(F,W;x)= \prod_{uv\in E(F)} W(x_u,x_v), \qquad
t(F,W)=\int\limits_{J^V} t(F,W;x)\,dx.
\]
We can think of $t(F,W)$ as ``counting subgraphs isomorphic to $F$''.
We also need the induced version:
\begin{align*}
t_{\ind}(F,W;x)&= \prod_{uv\in E(F)} W(x_u,x_v)\prod_{u,v\in V\atop
uv\notin E(F)}
(1-W(x_u,x_v)) \\
t_\ind(F,W)&=\int\limits_{J^V} t_{\ind}(F,W;x)\,dx.
\end{align*}
For any subset $S\subseteq V$, we define $t_S(F,W;.):~J^S\to\R$ by
integrating only over variables corresponding to $V\setminus S$: If
$x'$ and $x''$ denote the restrictions of $x\in J^V$ to $S$ and
$V\setminus S$, respectively, then
\[
t_S(F,W; x')=\int\limits_{J^{V\setminus S}} t(F,W;x)\, dx''.
\]
Note that $t_\emptyset(F,W)=t(F,W)$ and $t_V(F,W;.)=t(F,W;.)$.

These quantities have obvious analogues for bigraphs and bigraphons.
For a bigraphon $(J_1,J_2,W)$ and bipartite graph $(U_1,U_2,E)$, we
introduce variables $x_u\in J_1$ $(u\in U_1)$ and $y_v\in J_2$ $(v\in
U_2)$, and define
\[
\tbp(F,W;x,y)=\prod_{uv\in E(F)} W(x_u,y_v), \qquad
\tbp(F,W)=\int\limits_{J_1^{U_1}}\int\limits_{J_2^{U_2}}\tbp(F,W;x,y)
\,dy\,dx.
\]
Again, we define an induced version:
\begin{align*}
\tbp_\ind (F,W;x,y) &= \prod_{ij\in E(F)} W(x_i,y_j)\prod_{i\in U_1,
j\in U_2\atop ij\notin E(F)} (1-W(x_i,y_j))\\ \tbp_\ind (F,W)
&=\int\limits_{J_1^{U_1}}\int\limits_{J_2^{U_2}} \tbp_\ind (F,W;x,y)
\,dy\,dx.
\end{align*}
Assume that subsets $S_i\subseteq U_i$ are specified. We define the
function $\tbp(F,W;.):~J_1^{S_1}\times J_2^{S_2}\to\R$ by
\[
\tbp_{S_1,S_2}(F,W; x',y')=\int\limits_{J_1^{U_1\setminus S_1}}
\int\limits_{J_2^{U_2\setminus S_2}}\tbp(F,W;x,y)\,dy''\,dx'',
\]
where, similarly as above, $x'$ and $x''$ denote the restrictions of
$x\in J_1^{U_1}$ to $S_1$ and $U_1\setminus S_1$, respectively, and
similarly for $y$. We can define $\tbp_{\ind;S_1,S_2}(F,W)(x',y')$
analogously.

Two graphons $(J,W)$ and $(J',W')$ are {\it weakly isomorphic} if for
every graph $F$, $t(F,W)=t(F,W')$. Various characterizations of weak
isomorphism were given in \cite{BCL}. Every graphon is weakly
isomorphic to a graphon on $[0,1]$ (with the Lebesgue measure), and
also to a (possibly different) graphon which is twin-free in the
sense that $W(x,.)$ and $W(x',.)$ differ on a set of positive measure
for all $x\not= x'$.

\section{The topology of graphons}\label{TOPGRAPH}

\subsection{The neighborhood distance}

Let $(J,W)$ be a graphon. We can endow the space $J$ with a distance
function by
\begin{align*}
r_W(x,y)=\|W(x,.)-W(y,.)\|_1.
\end{align*}
This function is defined for almost all pairs $x,y$; we can delete
those points from $J$ where $W(x,.)\notin L_1(W)$ (a set of measure
$0$), to have $r_W$ defined on all pairs. It is clear that $r_W$ is a
pre-metric (it is symmetric and satisfies the triangle inequality).
We call $r_W$ the {\it neighborhood distance} on $W$.

We also define metrics on bigraphons, endowing the spaces $J_1$ and
$J_2$ with distance functions by
\begin{align*}
r_1(x,y)&=\|W(x,.)-W(y,.)\|_1\qquad (x,y\in J_1),\\
r_2(x,y)&=\|W(.,x)-W(.,y)\|_1\qquad (x,y\in J_2).
\end{align*}
These functions are defined for almost all pairs $x,y$.

\begin{example}\label{EXA:SPHERE}
Let $S^k$ denote the unit sphere in $\R^{k+1}$, consider the uniform
probability measure on it, and let $W(x,y)=1$ if $x\cdot y\ge 0$ and
$W(x,y)=0$ otherwise. Then $(S^k,W)$ is a graphon, in which the
neighborhood distance of two points $a,b\in S^k$ is just their
spherical distance (normalized by dividing by $\pi$). Furthermore,
$1-2(W\circ W)(x,y)$ is just the spherical distance of $x$ and $y$,
and from here is is easy to see that the similarity distance is
within constant factors of the neighborhood distance.
\end{example}

\begin{example}\label{EXA:DIST}
Let $(M,d)$ be a metric space, and let $\pi$ be a Borel probability
measure on $M$. Assume that the diameter of $M$ is at most $1$. Then
$d$ can be viewed as a graphon on $(M,d)$. For $x,y\in M$, we have
\[
r_d(x,y)=\int\limits_M |d(x,z)-d(y,z)|\,d\pi(z) \le \int\limits_M
d(x,y) \,d\pi(z) = d(x,y),
\]
so the identity map $(M,d)\to (M,r_d)$ is contractive. This implies
that if $(M,d)$ is compact, and/or finite dimensional (in many senses
of dimension), then so is $(M,r_d)$. For most "everyday" metric
spaces like (like segments, spheres, or balls) $r_d(x,y)$ can be
bounded from below by $\Omega(d(x,y))$, in which case $(M,d)$ and
$(M,r_d)$ are homeomorphic.

More generally, if $F:~[0,1]\to[0,1]$ is a continuous function, then
$W(x,y)=F(d(x,y))$ defines a graphon, and the identity map
$(M,d)\to(M,r_W)$ is continuous.
\end{example}

\begin{example}\label{EXA:FINFORCE}
Finitely forcible graphons, mentioned in the introduction, give
interesting examples, for whose details we refer to \cite{LSz6}. One
class is stepfuctions (equivalent to finite weighted graphs), which
were proved to be finitely forcible by Lov\'asz and S\'os
\cite{LSos}; for these, the underlying metric space is finite. Other
examples introduced in \cite{LSz6} provide as underlying topologies
an interval, the Cantor set, and the one-point compactification of
$\N$.
\end{example}

\subsection{Pure [bi]graphons}

A bigraphon $(J_1,J_2,W)$ is {\it pure} if $(J_i, r_i)$ is a complete
separable metric space and the probability measure has full support
(i.e., every open set has positive measure). This definition includes
that $r_i(x,y)$ is defined for all $x,y\in J_i$ and $r_i(x,y)>0$ if
$x\not=y$, i.e., the bigraphon has no "twin points". We say that a
graphon is {\it pure}, if the underlying metric probability space is
complete, separable and the probability measure has full support.

\begin{theorem}\label{THM:PURE}
Every [bi]graphon is weakly isomorphic to a pure [bi]graphon.
\end{theorem}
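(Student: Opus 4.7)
The strategy is to pass to the ``row map'' into $L_1$, take the support of the pushforward measure, and lift the graphon. I would first reduce to the twin-free case using the construction cited in Section \ref{PRELIM}. The row map $\phi: J \to L_1(J,\pi)$ defined by $\phi(x) = W(x,\cdot)$ is then injective on a conull subset, and it is Borel measurable because $W$ is a measurable kernel on a standard probability space (so $L_1(J,\pi)$ is a separable, and hence Polish, Banach space). Form the pushforward $\mu = \phi_*\pi$ on $L_1(J,\pi)$, and take the underlying space of the new graphon to be the topological support $J' = \supp(\mu)$, which is closed in $L_1(J,\pi)$, hence itself a complete separable metric space carrying $\mu$ with full support.

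Next, I would lift the graphon to $J' \times J'$. By the Lusin--Souslin theorem, the Borel-injective map $\phi$ is a Borel isomorphism onto its image $\phi(J)$, which is a Borel, $\mu$-conull subset of $J'$, and $\phi^{-1}: \phi(J) \to J$ is Borel measurable. Define $W': J' \times J' \to [0,1]$ symmetrically by
\[
W'(f,g) = W\bigl(\phi^{-1}(f), \phi^{-1}(g)\bigr)
\]
on $\phi(J) \times \phi(J)$, and extend across the $\mu \otimes \mu$-null complement so that each row $W'(f,\cdot)$ is precisely the $L_1$-representative given by $f$ itself (read through $\phi$). The product map $\phi \times \phi$ is then measure-preserving and intertwines $W$ and $W'$ off a null set, so for every graph $F$ a direct change of variables gives
\[
t(F,W') = \int_{(J')^V}\prod_{uv\in E(F)} W'(x_u,x_v)\,d\mu^V = \int_{J^V}\prod_{uv\in E(F)} W(x_u,x_v)\,d\pi^V = t(F,W),
\]
so $(J,W)$ and $(J',W')$ are weakly isomorphic.

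It remains to verify purity. By construction $(J', \|\cdot\|_{L_1})$ is a complete separable metric space with full-support probability measure $\mu$, so it suffices to show that the neighborhood distance $r_{W'}$ coincides with the ambient $L_1$-metric. For $f = \phi(x)$ and $g = \phi(y)$ in $\phi(J)$, one more change of variables gives
\[
r_{W'}(f,g) = \int_{J'} |W'(f,h) - W'(g,h)|\,d\mu(h) = \int_J |W(x,z) - W(y,z)|\,d\pi(z) = \|\phi(x) - \phi(y)\|_{L_1(J,\pi)},
\]
and the coherent extension of $W'$ described above propagates the identity to all of $J'$. The bigraphon version follows by applying the same recipe separately to the two row maps $\phi_i: J_i \to L_1(J_{3-i})$.

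The main technical obstacle is the measurability bookkeeping in the second step: invoking Lusin--Souslin to upgrade $\phi$ to a Borel isomorphism onto its image so that the pulled-back $W'$ is genuinely Borel measurable, and extending $W'$ across the null set $J' \setminus \phi(J)$ coherently enough that the intrinsic neighborhood distance $r_{W'}$ agrees with the inherited $L_1$-metric pointwise rather than merely almost everywhere. Once that is settled, both the weak-isomorphism computation and the identification of the two metrics reduce to routine change-of-variable calculations.
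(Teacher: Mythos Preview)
Your core strategy matches the paper's exactly: push $J$ forward into $L_1$ via the row map $x\mapsto W(x,\cdot)$, take the support of the pushforward measure as the new underlying space, and lift the kernel. The one real difference is in how the lifted kernel is defined. You invert $\phi$ via Lusin--Souslin (after first reducing to the twin-free case so that $\phi$ is injective) and set $W'(f,g)=W(\phi^{-1}(f),\phi^{-1}(g))$; this forces you to worry about the coherent null-set extension so that $r_{W'}$ coincides with the ambient $L_1$ metric pointwise. The paper bypasses all of that by using the tautological evaluation: for the bigraphon case it sets $\widetilde W(f,y)=f(y)$ on $T_1\times J_2$ (and then repeats on the second side). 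This needs no preliminary twin-free reduction, no descriptive-set-theoretic inversion, and no separate coherence argument, since by construction the row $\widetilde W(f,\cdot)$ \emph{is} $f$, so $r_{\widetilde W}$ is literally the $L_1$ distance on $T_1$. Your route is correct, but the paper's shortcut is what dissolves the ``main technical obstacle'' you flagged.
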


\begin{remark}\label{REDUCE}
It was shown in \cite{BCL} that every graphon is weakly isomorphic to
a graphon on a standard probability space with no parallel points,
which means that for any two points $x,x'\in J$, $W(x,.)$ and
$W(x',.)$ differ on a set of positive measure. Lemma \ref{LEM:TYP}
can be considered as a strengthening of this result.
\end{remark}

\begin{proof}
We give the proof for bigraphons; the case of graphons is similar. We
assume that $J_1$ and $J_2$ are standard probability spaces; this can
be achieved similarly as for graphons. Let $T_1$ be the set of
functions $f\in L_1[J_2]$ such that for every $L_1$-neighborhood $U$
of $f$, the set $\{x\in J_1:~W(x,.)\in U\}$ has positive measure.

\begin{claim}\label{CLAIM:TYP}
For almost every point $x\in J_1$, $W(x,.)\in T_1$.
\end{claim}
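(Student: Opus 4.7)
The plan is to exhibit $T_1$ as the topological support of a pushforward Borel probability measure on the separable metric space $L_1(J_2)$, and then invoke the standard fact that such a support carries full measure.

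First I would define the map $\Phi\colon J_1\to L_1(J_2)$ by $\Phi(x)=W(x,\cdot)$. This map is well defined on a set of full measure, since by Fubini's theorem $W(x,\cdot)\in L_1(J_2)$ for almost every $x\in J_1$; we may redefine $\Phi$ arbitrarily on the exceptional null set. Next, I would verify that $\Phi$ is Borel measurable. For any fixed $g\in L_1(J_2)$, the function $x\mapsto \|W(x,\cdot)-g\|_1=\int_{J_2}|W(x,y)-g(y)|\,dy$ is measurable on $J_1$ by Fubini, so the preimage under $\Phi$ of every open ball in $L_1(J_2)$ lies in $\AA_1$. Since $J_2$ is standard, $L_1(J_2)$ is separable, so its Borel $\sigma$-algebra is generated by open balls; hence $\Phi$ is Borel measurable.

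I would then push forward to obtain a Borel probability measure $\mu=\Phi_\ast\pi_1$ on $L_1(J_2)$. Unwinding the definition of $T_1$, we have $f\in T_1$ iff $\mu(U)>0$ for every open neighborhood $U$ of $f$ in $L_1(J_2)$; that is, $T_1$ is exactly the topological support $\supp(\mu)$.

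The final step is to show $\mu(T_1)=1$, which gives $\pi_1(\Phi^{-1}(T_1))=1$, and thus $W(x,\cdot)\in T_1$ for almost every $x\in J_1$. This is the classical fact that in a separable metric space every Borel probability measure is concentrated on its support: the complement $L_1(J_2)\setminus T_1$ is open, every point of it admits an open ball of $\mu$-measure zero, and by separability we may cover the complement by countably many such balls, giving $\mu(L_1(J_2)\setminus T_1)=0$. The only delicate ingredient is the Borel measurability of $\Phi$ and the separability of $L_1(J_2)$; once these are in hand, the rest is routine. The separability would be the main technical point to nail down carefully, since it is exactly where the assumption that $J_2$ is standard enters.
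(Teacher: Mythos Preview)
Your argument is correct and is essentially the same as the paper's: both identify $T_1$ as the support of the pushforward of $\pi_1$ under $x\mapsto W(x,\cdot)$ and use separability of $L_1(J_2)$ to cover the complement by countably many open sets of measure zero. You are more explicit about the measurability of $\Phi$ and phrase things in the language of pushforward measures and supports, whereas the paper carries out the same covering argument directly without naming these notions.
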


Indeed, it is clear that for almost all $x\in J_1$, $W(x,.)\in
L_1[J_2]$. Every function $g \in L_1[J_2]\setminus T_1$ has an open
neighborhood $U_g$ in $L_1[J_2]$ such that $\pi_1\{x\in
J_1:~W(x,.)\in U_g\}=0$. Let $U = \bigcup_{g\notin T_1} U_g$. Since
$L_1[J_2]$ is separable, $U$ equals the union of some countable
subfamily $\{U_{g_i}:~i\in\N\}$ and thus $\pi_1\{x\in J_1:~W(x,.)\in
U\}=0$. Since if $W(x,.)\notin T_1$ then $W(x,.) \in U$, this proves
the Claim.

Clearly $T_1$ inherits a metric from $L_1[J_2]$, and it is complete
and separable in this metric. The functions $W(x,.)$ are everywhere
dense in $T_1(W)$ and have measure $1$. It also inherits a
probability measure $\pi'_1$ from $J_1$ through
\[
\pi'_1(X)= \pi_1\{x\in\Omega_1:~W(x,.)\in X\}.
\]
So $T_1$ is a complete separable metric space with a probability
measure on its Borel sets. It also follows from the definition of
$T_1$ that every open set has positive measure.

Define $\widetilde{W}:~T_1\times J_2\to[0,1]$ by
$\widetilde{W}(f,y)=f(y)$ for $f\in T_1$ and $y\in J_2$. Then we can
replace $J_1$ by $T_1$ and $W$ by $\widetilde{W}$, to get a weakly
isomorphic graphon. Similarly, we can replace $J_2$ by $T_2$.
\end{proof}

We say that two graphons $(J,W)$ and $(J',W')$ are {\it isometric} if
there is an isometric bijection $\phi:~J\to J'$ that is measure
preserving, and $W'(\phi(x),\phi(y))=W(x,y)$ for almost all $x,y\in
J$. The definition for bigraphons is slightly more complicated: two
bigraphons $(J_1,J_2,W)$ and $(J_1',J_2',W')$ are {\it isometric} if
there are isometric, measure preserving bijections $\phi_1:~J_1\to
J'_1$ and $\phi_2:~J_2\to J'_2$ such that
$W'(\phi_1(x),\phi_2(y))=W(x,y)$ for almost all $(x,y)\in J_1\times
J_2$.

\begin{theorem}\label{LEM:TYP}
If two pure [bi]graphons are weakly isomorphic, then they are
isometric.
\end{theorem}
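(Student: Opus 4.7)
The plan is to leverage the coupling characterization of weak isomorphism from~\cite{BCL} and then use all three clauses of purity---no twins, full support, completeness---to promote the coupling to a bona fide isometric bijection. Concretely, by the bigraphon analogue of the \cite{BCL} characterization, there exist measure-preserving maps $\phi_i:[0,1]\to J_i$ and $\phi_i':[0,1]\to J_i'$ ($i=1,2$) such that
\[
W(\phi_1(s),\phi_2(t))=W'(\phi_1'(s),\phi_2'(t))\quad\text{for a.e.\ }(s,t)\in[0,1]^2.
\]
By Fubini, fix a full-measure set $G\subseteq[0,1]$ of ``good'' $s$ for which the identity holds for a.e.\ $t$.

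I would next define $f_1(\phi_1(s)):=\phi_1'(s)$ for $s\in G$ and check that this is a well-defined isometry on $\phi_1(G)$. If $\phi_1(s)=\phi_1(s'')$ with $s,s''\in G$, then $W'(\phi_1'(s),\phi_2'(t))=W'(\phi_1'(s''),\phi_2'(t))$ for a.e.\ $t$, and since $\phi_2'$ is measure preserving this equality transports to $W'(\phi_1'(s),y)=W'(\phi_1'(s''),y)$ for a.e.\ $y\in J_2'$; the no-twin clause of purity for $W'$ then forces $\phi_1'(s)=\phi_1'(s'')$. The same change of variables gives
\[
r_1(\phi_1(s),\phi_1(s''))=\int_0^1|W(\phi_1(s),\phi_2(t))-W(\phi_1(s''),\phi_2(t))|\,dt=r_1'(\phi_1'(s),\phi_1'(s'')),
\]
so $f_1$ is isometric on $\phi_1(G)$.

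Since $\phi_1(G)$ has full $\pi_1$-measure and $\pi_1$ has full support, $\phi_1(G)$ is dense in $J_1$. Completeness of $(J_1',r_1')$ supplies a unique isometric extension $f_1:J_1\to J_1'$, and running the symmetric construction yields a map $g_1:J_1'\to J_1$ inverse to $f_1$ on dense sets, hence everywhere. The same argument on the other side produces an isometric bijection $f_2:J_2\to J_2'$. Measure-preservation of $f_1,f_2$ follows from the relations $f_i\circ\phi_i=\phi_i'$ holding on full-measure subsets of $[0,1]$, and the kernel identity $W'(f_1(x),f_2(y))=W(x,y)$ holds almost everywhere on $J_1\times J_2$ because it already holds on the full-measure image of $(\phi_1,\phi_2):[0,1]^2\to J_1\times J_2$.

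The main obstacle is the well-definedness step: one must transport an a.e.-in-$t$ statement about $W'$-rows across the measure-preserving $\phi_2'$ to an a.e.-in-$y$ statement on $J_2'$ before the no-twin axiom can be invoked, and then make sure the continuous extension remains Borel and measure preserving. Each piece is tailored to exactly one clause of purity---no twins to collapse almost-equal rows to identical points, full support to promote full measure to density, completeness to extend the isometry---so once the coupling from~\cite{BCL} is in hand the argument is essentially forced.
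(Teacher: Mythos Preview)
Your argument is correct and follows the same overall strategy as the paper: invoke the characterization of weak isomorphism from \cite{BCL}, then use the three purity clauses (no twins, full support, completeness) exactly as you describe to upgrade the a.e.\ correspondence to a global isometry.

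The only noteworthy difference is \emph{which} result from \cite{BCL} is taken as the starting point. The paper cites the sharper form (Theorem~2.1(a) there): for twin-free graphons one already obtains, after deleting null sets $S\subseteq J$ and $S'\subseteq J'$, a measure-preserving \emph{bijection} $\phi:J\setminus S\to J'\setminus S'$ with $W'(\phi(x),\phi(y))=W(x,y)$ almost everywhere. This bypasses your well-definedness step entirely; one simply observes that $\phi$ is an isometry on $J\setminus S$, that $J\setminus S$ is dense by full support, and that completeness extends $\phi$ to all of $J$. You instead start from the weaker coupling form (measure-preserving maps $\phi_i,\phi_i'$ from $[0,1]$) and recover the bijection by hand, using no-twins to show $f_1(\phi_1(s)):=\phi_1'(s)$ is well defined. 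Both routes are valid; the paper's is shorter because the no-twin work has been absorbed into the cited lemma, while yours is more self-contained and makes the role of each purity clause more explicit.
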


\begin{proof}
We describe the proof for graphons. Theorem 2.1 (a) in \cite{BCL}
says that if two graphons $(J,W)$ and $(J',W')$ are weakly
isomorphic, and they have no twins, then one can delete delete
$0$-sets $S\subseteq J$ and $S'\subseteq J'$ such that there is a
bijective measure preserving map $\phi:~J\setminus S\to J'\setminus
S'$ such that $W'(\phi(x),\phi(y))=W(x,y)$ for almost all $(x,y)\in
J\times J$. We may even assume that for every $x\in J\setminus S$,
$W'(\phi(x),\phi(y))=W(x,y)$ holds for almost all $y$ (and vice
versa), since this can be achieved by deleting further $0$-sets.
Clearly $\phi$ preserves the metric.

We also know that $J\setminus S$ is dense in $J$ (since $(J,W)$ is
pure and so its probability measure has full support), and so $J$ is
the completion of $J\setminus S$ (and similarly for $J'$). Hence
$\phi$ extends to an isometry between $J$ and $J'$, which shows that
$(J,W)$ and $(J',W')$ are isometric graphons.
\end{proof}

\begin{remark}\label{NORMALIZE}
Is purity the ultimate normalization of a graphon? There is still
some freedom left: we can change the value of $W$ on a symmetric
subset of $J\times J$ that intersects every fiber $J\times \{v\}$ in
a set of measure. We can take the integral of $W$ (which is a measure
$\omega$ on $J$), and then the derivative of $\omega$ wherever this
exists. This way we get back $W$ almost everywhere, and a well
defined value for some further points. What is left undefined is the
set of ``essential discontinuity'' of $W$ (of measure $0$). It would
be interesting to relate this set to combinatorial properties of $W$.
\end{remark}

\subsection{Density functions on pure [bi]graphons}

The following technical Lemma will be very useful in the study of
$r_W$ and related distance functions.

\begin{lemma}\label{LEM:CONTIN}
{\rm(a)} Let $(J,W)$ be a graphon, $F$, a graph, and $S\subseteq V$,
an independent set of nodes. Then the function
$t=t_S(F,W;.):~J^S\to\R$ satisfies
\[
|t(x)-t(x')|\le |E| \max_{i\in S} r_W(x_i,x_i').
\]

\smallskip

{\rm(b)} Let $(J_1,J_2,W)$ be a bigraphon, let $F=(U_1,U_2,E)$ be a
bigraph, and let $S_i\subseteq U_i$ be such that no edge connects
$S_1$ to $S_2$. Then the function $t=\tbp_{S_1,S_2}(F,W,.):
J_1^{S_1}\times J_2^{S_2}\to\R$ satisfies
\[
|t(x,y)-t(x',y')|\le |E| \max\{\max_{i \in S_1}
r_1(x_i,x_i'),\max_{j\in S_2} r_2(y_j,y_j')\}.
\]
\end{lemma}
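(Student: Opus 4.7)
The plan is a coordinate-by-coordinate telescoping. For part (a), given $x,x'\in J^S$, I would interpolate between them through a chain of $|S|+1$ tuples, each differing from the next in a single coordinate. By the triangle inequality it then suffices to bound $|t_S(F,W;z)-t_S(F,W;z')|$ when $z,z'$ agree on $S\setminus\{j\}$, and then sum the contributions over $j\in S$.

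To handle a single swap at $j\in S$, I would split the product of edge weights as
\[
t_S(F,W;z) = \int_{J^{V\setminus S}} \Big(\prod_{uv\in E_j} W(z_u,z_v)\Big)\Big(\prod_{uv\in E\setminus E_j} W(z_u,z_v)\Big)\,dz'',
\]
where $E_j$ is the set of edges of $F$ incident to $j$. The key point is that, because $S$ is independent, every edge in $E_j$ has its other endpoint in $V\setminus S$; hence after the swap $z_j\mapsto z_j'$ only the first factor changes, and aside from $z_j$ itself all of its variables are integration variables. Applying the elementary inequality $|\prod a_i-\prod b_i|\le \sum |a_i-b_i|$ for $a_i,b_i\in[0,1]$, bounding the second factor by $1$, and using Fubini to integrate out the unused coordinates in each summand, I obtain
\[
|t_S(F,W;z)-t_S(F,W;z')| \le \sum_{v:\,jv\in E} \int_J |W(z_j,w)-W(z_j',w)|\,dw = d_F(j)\,r_W(z_j,z_j'),
\]
where $d_F(j)$ denotes the degree of $j$ in $F$.

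Summing the telescope gives $|t(x)-t(x')|\le \sum_{j\in S} d_F(j)\,r_W(x_j,x_j') \le \bigl(\sum_{j\in S} d_F(j)\bigr)\max_{j\in S} r_W(x_j,x_j')$, and independence of $S$ ensures that each edge of $F$ contributes to at most one of these degrees, so the sum is at most $|E|$. Part (b) follows from the same argument with cosmetic changes: the hypothesis that no edge of $F$ joins $S_1$ to $S_2$ plays the role of independence, guaranteeing that swapping $x_i$ with $i\in S_1$ (respectively $y_j$ with $j\in S_2$) only alters edges whose other endpoint lies in $U_2\setminus S_2$ (respectively $U_1\setminus S_1$), which is precisely the integration block. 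The only thing that requires care is the notational bookkeeping; there is no analytic obstacle, since every factor is in $[0,1]$ and every integral is taken against a probability measure.
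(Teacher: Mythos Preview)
Your proof is correct and follows essentially the same strategy as the paper's: telescope the product of edge weights and use that, by independence of $S$, each factor that changes has its other endpoint among the integration variables, so the difference integrates to an $r_W$-distance. The only difference is organizational---you telescope first over the coordinates of $S$ and then over the edges incident to each coordinate, whereas the paper telescopes directly over all edges in one sweep (setting $x_i=x_i'$ for $i\notin S$); both routes land on the same bound $\sum_{j\in S} d_F(j)\,r_W(x_j,x_j')\le |E|\max_i r_W(x_i,x_i')$.
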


\begin{remark}\label{REM:LIP}
(i) It follows that the functions $t$ in (a) and (b) are Lipschitz
(and hence continuous).

\smallskip

(ii) In both parts (a) and (b) of the Lemma, the graph $F$ could have
multiple edges.
\end{remark}

\begin{proof}
We describe the proof of (a); the proof of (b) is similar. For each
$i\in U\setminus S$, let $x_i=x_i'$ be a variable. Let
$E=\{u_1v_1,\dots u_mv_m\}$, where we may assume that $v_i\in
U\setminus S$. Then
\begin{align*}
&t(x)-t(x')= \int\limits_{J^{U\setminus S}} \prod_{i=1}^m
W(x_{u_i},x_{v_i})\,dy - \int\limits_{J^{U\setminus S}}
\prod_{i=1}^m W(x'_{u_i},x'_{v_i})\,dy\\
&~~=\sum_{j=1}^m\int\limits_{J^{U\setminus S}}
\prod_{i<j}W(x_{u_i},x_{v_i})
(W(x_{u_j},x_{v_j})-W(x'_{u_j},x'_{v_j}))
\prod_{j>i}W(x'_{u_i},x'_{v_i}),dy
\end{align*}
and hence
\[
|t(x)-t(x')|\le \sum_{j=1}^m\int\limits_{J^{U\setminus S}}
|W(x_{u_j},x_{v_j})-W(x'_{u_j},x'_{v_j})|\,dy.
\]
By the assumption that $v_i\in U\setminus S$, we have
$x_{v_j}=x'_{v_j}$ for every $j$, and so
\[
|t(x)-t(x')| \le \sum_{j=1}^m r_W(x_{u_j},x'_{u_j}) \le |E|
\max_{1\le i \le k} r_W(x_i,x_i'),
\]
which proves the assertion.
\end{proof}

Lemma \ref{LEM:CONTIN} has an important corollaries for pure
graphons, which are closely related to Lemma 2.8 in \cite{LSz6}. We
do not formulate all versions, just a few that we need.

\begin{corollary}\label{COR:CONTIN}
Let $(J,W)$ be a pure graphon, and let $F$ be a graph and let
$S\subseteq V$, where $S$ is independent. Then $t_S(F,W;x)$ is a
continuous function of $x\in J^S$.
\end{corollary}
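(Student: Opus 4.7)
The plan is to derive the corollary directly from Lemma \ref{LEM:CONTIN}(a), which already supplies a Lipschitz-type bound; the only work is to justify that, under the purity hypothesis, this bound delivers genuine continuity (not merely almost-everywhere continuity) in the topology of $J^S$.

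First I would recall that, by definition, a pure graphon has $(J, r_W)$ a complete separable metric probability space with full support; in particular, every point of $J$ carries a well-defined row function $W(x,\cdot)\in L_1(J)$, so $r_W(x,y)=\|W(x,\cdot)-W(y,\cdot)\|_1$ is defined for \emph{every} pair $x,y\in J$, not merely almost every pair. The topology on $J$ is the metric topology induced by $r_W$, and the product topology on $J^S$ is the topology of the metric $\rho(x,x')=\max_{i\in S} r_W(x_i,x_i')$.

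Next I would apply Lemma \ref{LEM:CONTIN}(a): since $S$ is independent in $F$, the lemma yields
\[
|t_S(F,W;x)-t_S(F,W;x')|\le |E(F)|\cdot \max_{i\in S} r_W(x_i,x_i')=|E(F)|\cdot\rho(x,x')
\]
for all $x,x'\in J^S$. On a pure graphon this inequality holds pointwise (its proof in Lemma \ref{LEM:CONTIN} is a straightforward integral manipulation that goes through for every $x,x'$ once the row functions $W(x_i,\cdot)$ are defined everywhere). Thus $t_S(F,W;\cdot)$ is Lipschitz, and in particular continuous, with respect to $\rho$, i.e.\ with respect to the product topology on $J^S$.

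The only potential obstacle is the gap between ``$r_W$ defined a.e.'' and ``$r_W$ defined everywhere,'' but this is exactly what purity was introduced to handle (via Theorem \ref{THM:PURE}), so the argument goes through cleanly and no further work is needed.
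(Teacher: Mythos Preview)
Your proposal is correct and matches the paper's approach exactly: the paper states this corollary without a separate proof, treating it as immediate from Lemma~\ref{LEM:CONTIN}(a) together with Remark~\ref{REM:LIP}(i) (Lipschitz implies continuous), and purity is invoked precisely so that $r_W$ is an honest metric defined at every point.
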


Applying this when $F$ is a path of length 2, we get:

\begin{corollary}\label{COR:SQUARE}
For every pure graphon $(J,W)$, $W\circ W$ is a continuous function
on $J$.
\end{corollary}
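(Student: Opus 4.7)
The plan is essentially a one-line application of the preceding Corollary \ref{COR:CONTIN}. Take $F$ to be the path of length $2$, that is, the graph on vertex set $V=\{1,2,3\}$ with edges $12$ and $23$, and set $S=\{1,3\}$. Since the two endpoints of $P_2$ are non-adjacent, $S$ is an independent set in $F$, so the hypothesis of Corollary \ref{COR:CONTIN} is satisfied.

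Unpacking the definition of $t_S(F,W;\cdot)$, the only vertex in $V\setminus S$ is the middle vertex $2$, so integrating out $x_2$ gives
\[
t_S(F,W;x_1,x_3)=\int_J W(x_1,x_2)\,W(x_2,x_3)\,dx_2=(W\circ W)(x_1,x_3).
\]
Hence $t_S(F,W;\cdot)$ is exactly the operator product $W\circ W$ regarded as a function on $J\times J$, and Corollary \ref{COR:CONTIN} immediately yields that $W\circ W$ is continuous on $J^2$.

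I do not anticipate any real obstacle: the entire content of the corollary is the observation that $W\circ W$ can be written as a subgraph-density function $t_S(P_2,W;\cdot)$ whose free variables sit on the two (non-adjacent) endpoints of the path, so that Lemma \ref{LEM:CONTIN}(a), through Corollary \ref{COR:CONTIN}, provides the required continuity. The only point worth double-checking is that the endpoints of $P_2$ do indeed form an independent set, which is why the choice $F=P_2$ (and not, say, a triangle) is essential.
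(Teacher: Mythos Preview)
Your proposal is correct and is exactly the argument the paper gives: the paper simply states ``Applying this when $F$ is a path of length 2, we get'' Corollary \ref{COR:SQUARE}, and you have spelled out precisely that application, choosing $S=\{1,3\}$ to be the two non-adjacent endpoints and identifying $t_S(P_2,W;\cdot)$ with $W\circ W$.
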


Another application of Corollary \ref{COR:CONTIN} gives:

\begin{corollary}\label{COR:DERAND1}
Let $(J,W)$ be a pure graphon, and let $F_1,\dots,F_m$ be graphs
whose node set contains a common set $S$, which is independent in
each. Let $T\subseteq S$, and let $a_1,\dots,a_m$ be real numbers.
Let $x\in J^T$, and assume that the equation
\begin{equation}\label{EQ:1}
\sum_{i=1}^m a_i t_S(F_i,W;x,y) =0
\end{equation}
holds for almost all $y\in J^{S\setminus T}$. Then it holds for all
$y\in J^{S\setminus T}$.
\end{corollary}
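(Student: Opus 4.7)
The plan is to combine the continuity provided by Corollary \ref{COR:CONTIN} with the full-support property built into the definition of a pure graphon. The target function
\[
g(y) \;=\; \sum_{i=1}^m a_i\, t_S(F_i,W;x,y),\qquad y\in J^{S\setminus T},
\]
is a finite linear combination of functions $y \mapsto t_S(F_i,W;x,y)$. Since $S$ is independent in each $F_i$, Corollary \ref{COR:CONTIN} applies to $t_S(F_i,W;\cdot)$ as a function on $J^S$, so in particular each $y \mapsto t_S(F_i,W;x,y)$ is continuous on $J^{S\setminus T}$ for the fixed $x$. Hence $g$ is continuous.

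Next I would invoke purity. Because $(J,W)$ is pure, the Borel probability measure on $J$ has full support, i.e.\ every non-empty open subset of $J$ has positive measure. The product measure on $J^{S\setminus T}$ then also has full support: if $U = \prod_{i} U_i$ is a non-empty basic open box, then each $U_i$ is a non-empty open set in $J$ and hence has positive measure, so $U$ has positive product measure.

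Now I would conclude by a standard continuity argument. Suppose for contradiction that $g(y_0) \ne 0$ for some $y_0 \in J^{S\setminus T}$. By continuity of $g$ there is an open neighborhood $V$ of $y_0$ on which $g$ does not vanish. By the full-support property just established, $V$ has positive product measure, contradicting the hypothesis that $g=0$ almost everywhere on $J^{S\setminus T}$. Therefore $g \equiv 0$, which is exactly \eqref{EQ:1} for every $y$.

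There is no real obstacle here; the statement is essentially a ``continuous + almost-everywhere zero $\Rightarrow$ everywhere zero'' principle, and both ingredients (continuity of $t_S$, and full support of the measure) have been set up precisely so that this routine deduction goes through. The only thing to be careful about is to verify that the product measure on $J^{S\setminus T}$ inherits the full-support property, which as noted follows immediately from the definition of the product topology.
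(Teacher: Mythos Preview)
Your proof is correct and follows essentially the same approach as the paper: invoke Corollary~\ref{COR:CONTIN} to get continuity of the left-hand side in $y$, then use the full-support property of the pure graphon to conclude that a continuous function vanishing almost everywhere must vanish identically. The only difference is that you spell out explicitly why the product measure on $J^{S\setminus T}$ has full support, which the paper leaves implicit.
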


\begin{proof}
By Corollary \ref{COR:CONTIN}, the left hand side of \eqref{EQ:1} is
a continuous function of $(x,y)$, and so it remains a continuous
function of $y$ if we fix $x$. Hence the set where it is not $0$ is
an open subset of $J^{S\setminus T}$. Since the graphon is pure, it
follows that this set is either empty of has positive measure.
\end{proof}

We formulate one similar corollary for bigraphons.

\begin{corollary}\label{COR:DERAND}
Let $(J_1,J_2,W)$ be a pure bigraphon, and let $F_1,\dots,F_m$ be
bigraphs with the same bipartition classes $U_1$ and $U_2$. Let
$a_1,\dots,a_m$ be real numbers. Assume that the equation
\begin{equation}\label{EQ:2}
\sum_{i=1}^m a_i\tbp_{U_1}(F_i,W;x)=0
\end{equation}
holds for almost all $x\in J_1^{U_1}$. Then it holds for all $x\in
J_1^{U_1}$.
\end{corollary}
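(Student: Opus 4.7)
The plan is to imitate the proof of Corollary \ref{COR:DERAND1} almost verbatim, substituting the bigraphon version of the continuity statement. The only mild subtlety is checking that full support transfers to the relevant product space.

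First I would note that Lemma \ref{LEM:CONTIN}(b) applies to each $\tbp_{U_1}(F_i,W;\cdot)$ with the choice $S_1=U_1$ and $S_2=\emptyset$. The hypothesis ``no edge connects $S_1$ to $S_2$'' is vacuous here because $S_2$ is empty, so the lemma gives
\[
|\tbp_{U_1}(F_i,W;x)-\tbp_{U_1}(F_i,W;x')|\le |E(F_i)|\,\max_{u\in U_1} r_1(x_u,x_u').
\]
Thus each $\tbp_{U_1}(F_i,W;\cdot)$ is Lipschitz, hence continuous, on the metric space $(J_1^{U_1},\max_u r_1)$, and so the function
\[
h(x)=\sum_{i=1}^m a_i\tbp_{U_1}(F_i,W;x)
\]
is continuous on $J_1^{U_1}$.

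Next I would argue that the set $Z=\{x\in J_1^{U_1}:h(x)\ne 0\}$ is open in the product topology, and that the product probability measure on $J_1^{U_1}$ has full support. The latter is immediate from purity: since the measure on $(J_1,r_1)$ gives positive mass to every nonempty open set, a basic open set $\prod_{u\in U_1}O_u\subseteq J_1^{U_1}$ with each $O_u$ nonempty open has positive product measure. Hence every nonempty open subset of $J_1^{U_1}$ has positive measure.

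Finally, by hypothesis $h$ vanishes almost everywhere on $J_1^{U_1}$, so $Z$ has measure $0$; being open and of measure $0$ in a space with full-support measure, $Z$ must be empty. This gives $h\equiv 0$ on all of $J_1^{U_1}$, as claimed. There is no real obstacle here beyond lining up the continuity lemma and the full-support observation; everything else is the same topological ``continuous plus almost everywhere zero implies identically zero'' argument already used for Corollary \ref{COR:DERAND1}.
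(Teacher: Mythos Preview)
Your argument is correct and is exactly the approach the paper intends: it states Corollary~\ref{COR:DERAND} as the bigraphon analogue of Corollary~\ref{COR:DERAND1} without separate proof, and your application of Lemma~\ref{LEM:CONTIN}(b) with $S_1=U_1$, $S_2=\emptyset$ together with the full-support/open-set argument is precisely the intended adaptation.
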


\subsection{The similarity distance}

It turns out (it was already noted in \cite{LSz3}) that the distance
function $r_{W\circ W}$ defined by the operator square of $W$ is also
closely related to combinatorial properties of a graphon. We call
this the {\it similarity distance} (for reasons that will become
clear later). In explicit terms, we have
\begin{align}\label{EQ:SIM-DEF}
r_{W\circ W}(a,b)&= \int\limits_J\Bigl| \int\limits_J
W(a,y)W(y,x)\,dy- \int\limits_J
W(b,y)W(y,x)\,dy\Bigr|\,dx\nonumber\\
&= \int\limits_J\Bigl|\int\limits_J
W(x,y)\bigl(W(y,a)-W(y,b)\bigr)\,dy\Bigr|\,dx.
\end{align}

\begin{remark}\label{SAMPLE}
Let $\Xb,\Yb,\Zb$ be independent uniform random points from $J$, then
we can rewrite the definitions of these distances as
\begin{align}
r_W(a,b)&= \E_\Xb |W(\Xb,a)-W(\Xb,b)|,\label{EQ:NEIGH-DEF1}\\
r_{W\circ W}(a,b)&=
\E_\Xb\bigl|\E_\Yb(W(\Xb,\Yb)(W(\Yb,a)-W(\Yb,b)))\bigr|.\label{EQ:SIM-DEF1}
\end{align}
This formulation shows that this distance can be computed with
arbitrary precision from a bounded size sample. We do not go into the
details of this.
\end{remark}

\begin{lemma}\label{LEM:HAUSDORFF}
If $(J,W)$ is a pure graphon, then the similarity distance $r_{W\circ
W}$ is a metric.
\end{lemma}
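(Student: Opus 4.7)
The plan is to verify the three axioms of a metric for $r_{W\circ W}$. Symmetry and the triangle inequality are immediate from the identity $r_{W\circ W}(a,b)=\|(W\circ W)(a,\cdot)-(W\circ W)(b,\cdot)\|_1$, so the only nontrivial point is positivity: one must show that $r_{W\circ W}(a,b)=0$ forces $a=b$.

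Suppose $r_{W\circ W}(a,b)=0$. Then $(W\circ W)(a,x) = (W\circ W)(b,x)$ for almost every $x\in J$. The key preparatory step is to upgrade this to equality for \emph{every} $x$: by Corollary \ref{COR:SQUARE}, $W\circ W$ is continuous on $J\times J$, and since the probability measure of a pure graphon has full support, an a.e.\ zero continuous function is identically zero. Setting $\tilde W(y) := W(a,y)-W(b,y)$ and using the symmetry of $W$, the vanishing translates to $(W\tilde W)(x)=0$ for every $x\in J$.

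The rest is a short $L_2$ identity. Since $|W|\le 1$, the function $\tilde W$ lies in $L_2(J)$, and
\[
\|\tilde W\|_2^2 \;=\; \int_J \tilde W(y)\bigl(W(a,y)-W(b,y)\bigr)\,dy \;=\; (W\tilde W)(a) - (W\tilde W)(b) \;=\; 0.
\]
Hence $\tilde W = 0$ in $L_2(J)$, so $W(a,\cdot)=W(b,\cdot)$ almost everywhere, i.e.\ $r_W(a,b)=0$. Since $(J,W)$ is pure, it has no twin points by definition, and we conclude $a=b$.

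The only real subtlety is the passage from ``a.e.\ in $x$'' to ``for every $x$'' in the first step; without it one could not legitimately evaluate $W\tilde W$ at the specific points $a$ and $b$, and the short $L_2$ trick would collapse. Corollary \ref{COR:SQUARE} (itself a consequence of the Lipschitz estimate in Lemma \ref{LEM:CONTIN}) is tailored exactly for this purpose, after which the proof reduces to the one-line norm computation displayed above.
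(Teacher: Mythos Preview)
Your proof is correct and follows essentially the same route as the paper: upgrade the almost-everywhere equality $(W\circ W)(a,\cdot)=(W\circ W)(b,\cdot)$ to everywhere via continuity and full support, then evaluate at the two special points $a$ and $b$ and subtract to obtain $\|W(a,\cdot)-W(b,\cdot)\|_2^2=0$, whence $a=b$ by the no-twins property. The only cosmetic difference is that the paper invokes Corollary~\ref{COR:DERAND} for the ``a.e.\ $\Rightarrow$ everywhere'' step while you invoke Corollary~\ref{COR:SQUARE} directly; both rest on the same Lipschitz estimate in Lemma~\ref{LEM:CONTIN}.
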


\noindent So $(J,r_{W\circ W})$ is a metric space, and hence
Huasdorff. We will show later that it is always compact.

\begin{proof}
The only nontrivial part of this lemma is that $r_{W\circ W}(x,y)=0$
implies that $x=y$. The condition $r_{W\circ W}(x,y)=0$ implies that
for almost all $u\in J$ we have $(W\circ W)(x,u)=(W\circ W)(y,u)$, or
more explicitly
\[
\int\limits_J (W(x,z)-W(y,z))W(z,u)\,dz =0.
\]
Using that $(J,W)$ is pure, Corollary \ref{COR:DERAND} implies that
this holds for every $u\in J$. in particular, it holds for $u=x$ and
$u=y$. Taking the difference, we get that
\[
\int\limits_J (W(x,z)-W(y,z))(W(z,x)-W(z,y))\,dz =0,
\]
and hence $W(x,z)=W(y,z)$ almost everywhere. Using again that $(J,W)$
is pure, we get that $x=y$.
\end{proof}

For every $x\in J$, the function $W(x,.)$ is in $L_\infty(J)$, and
hence the weak topology of $L_1(J)$ gives a topology on $J$. It is
well known that when restricted to $L_\infty(J)$, this topology is
the weak-$*$ topology on $L_\infty(J)$, and hence it is metrizable,
and the unit ball of $L_\infty(J)$ is compact in it (Alaoglu's
Theorem). A sequence of points $(x_n)$ is convergent in this topology
if and only if
\[
\int_A W(x_n,y)\,dy\rightarrow \int_A W(x,y)\,dy
\]
for every measurable set $A\subseteq J$. We call this the {\it weak
topology} on $J$. We need this name only temporarily, since we are
going to show that $r_{W\circ W}$ gives a metrization of the weak
topology.

\begin{theorem}\label{THM:WEAKWW}
For any pure graphon, the metric $r_{W\circ W}$ defines exactly the
weak topology.
\end{theorem}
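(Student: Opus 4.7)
The plan is to prove the two topological containments separately. The easier direction, that weak convergence $x_n \to x$ implies $r_{W \circ W}(x_n, x) \to 0$, follows from dominated convergence. For each fixed $z \in J$ the function $W(\cdot, z) \in L_\infty(J) \subseteq L_1(J)$ is a legitimate test function, so weak convergence gives $(W \circ W)(x_n, z) \to (W \circ W)(x, z)$ pointwise in $z$; since these values lie in $[0,1]$ and $J$ is a probability space, the dominated convergence theorem yields $\|(W \circ W)(x_n, \cdot) - (W \circ W)(x, \cdot)\|_1 \to 0$, which is exactly $r_{W \circ W}(x_n, x) \to 0$.

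The reverse direction is the substantive one. Because $L_1(J)$ is separable, Alaoglu's theorem makes the closed unit ball $B \subseteq L_\infty(J)$ compact and metrizable in the weak-$*$ topology, and the weak topology on $J$ is the pullback of the topology on $B$ under the injective map $\iota : x \mapsto W(x, \cdot)$. To show $x_n \to x$ weakly, it suffices by the standard subsequence principle to show that every subsequence admits a further subsequence along which $W(x_{n_k}, \cdot) \to W(x, \cdot)$ weak-$*$. Assume $r_{W \circ W}(x_n, x) \to 0$, and extract any weak-$*$ cluster point $f \in B$ of the sequence $W(x_n, \cdot)$; the goal reduces to proving $f = W(x, \cdot)$ almost everywhere. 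Applying the dominated convergence argument of the easy direction pointwise in $z$, the weak-$*$ convergence $W(x_{n_k}, \cdot) \to f$ yields $(W \circ W)(x_{n_k}, z) \to \int W(z, y) f(y)\, dy$ for each $z$, and uniform boundedness upgrades this to $L_1$ convergence. Comparing with the hypothesis $(W \circ W)(x_{n_k}, \cdot) \to (W \circ W)(x, \cdot)$ in $L_1$, we deduce $\int W(z, y)\, h(y)\, dy = 0$ for almost every $z \in J$, where $h := f - W(x, \cdot) \in L_\infty(J)$.

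The remaining step is to conclude $h = 0$. The function $z \mapsto \int W(z, y) h(y)\, dy$ is Lipschitz with constant $\|h\|_\infty$ in the $r_W$-metric, by the same calculation that underlies Lemma \ref{LEM:CONTIN}; since $(J, W)$ is pure, the probability measure has full $r_W$-support, so any continuous function vanishing almost everywhere must vanish identically. Hence $\int W(z, y) h(y)\, dy = 0$ for every $z \in J$, i.e., $h \perp W(z, \cdot)$ in $L_2(J)$ for every $z \in J$. Let $V \subseteq L_2(J)$ be the closed linear span of $\{W(z, \cdot) : z \in J\}$; then $h \in V^\perp$. On the other hand, $L_2(J) \subseteq L_1(J)$ on a probability space, so the weak-$*$ convergence $W(x_{n_k}, \cdot) \to f$ in $L_\infty$ implies weak convergence in $L_2$; since $V$ is a closed convex subspace of $L_2(J)$ it is weakly closed, and every $W(x_{n_k}, \cdot)$ lies in $V$, so $f \in V$; trivially $W(x, \cdot) \in V$, hence $h \in V$. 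Therefore $h \in V \cap V^\perp = \{0\}$, and $f = W(x, \cdot)$ almost everywhere, completing the proof.

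The main obstacle is this last step. Obtaining $\int W(z, y) h(y)\, dy = 0$ for all $z$ only controls the action of $W$ on $h$, and for a generic $L_\infty$ function this need not force $h = 0$. The decisive observation is that the particular $h$ at hand is built from a weak-$*$ limit of elements of the span $V$, which places $h$ simultaneously in $V$ and in $V^\perp$; this is where purity (via the Lipschitz continuity lemma and full support of $\pi$) combines with the Hilbert space structure to close the argument.
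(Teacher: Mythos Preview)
Your proof is correct. The easy direction (weak convergence implies $r_{W\circ W}$-convergence via dominated convergence) matches the paper's argument exactly.

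For the reverse direction you take a genuinely different route. The paper dispatches it in one line of soft topology: having shown that the weak topology is finer than the $r_{W\circ W}$-topology, it invokes the general fact that a compact topology (the weak topology, via Alaoglu) which refines a Hausdorff topology (the $r_{W\circ W}$-topology, by Lemma~\ref{LEM:HAUSDORFF}) must coincide with it. Your argument instead works directly with sequences: you pass to a weak-$*$ cluster point $f$ of $W(x_n,\cdot)$ in the unit ball of $L_\infty$, and identify $f$ with $W(x,\cdot)$ by the Hilbert-space trick $h \in V \cap V^\perp$, where $V$ is the closed $L_2$-span of the fibers $W(z,\cdot)$. This is essentially a generalization of the computation inside Lemma~\ref{LEM:HAUSDORFF} (there both functions being compared are fibers; here one of them is an abstract weak-$*$ limit), combined with Mazur's theorem to place $f$ in $V$. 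The paper's approach is shorter and more conceptual, but it takes for granted that the weak topology on $J$ is itself compact, i.e., that the image $\{W(x,\cdot):x\in J\}$ is weak-$*$ closed in the unit ball of $L_\infty$; your argument is more self-contained in that it does not need this a priori, and in effect shows that any weak-$*$ limit of fibers along an $r_{W\circ W}$-Cauchy sequence is again a fiber.
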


\begin{proof}
First we show that the weak topology is finer than the topology of
$(J, r_{W\circ W})$. Suppose that $x_n\to x$ in the weak topology,
and consider
\[
r_{W\circ W}(x_n,x) = \int\limits_J\Bigl|\int\limits_J
\bigl(W(x_n,y)-W(x,y)\bigr)W(y,z)\,dy\Bigr|\,dz.
\]
Here the inner integral tends to $0$ for every $z$, by the weak
convergence $x_n\to x$. Since it also remains bounded, it follows
that the outer integral tends to $0$. This implies that $x_n\to x$ in
$(J, r_{W\circ W})$.

From here, the equality of the two topologies follows by general
arguments: the weak topology is compact, and the coarser topology of
$r_{W\circ W}$ is Hausdorff, which implies that they are the same.
\end{proof}

\begin{corollary}\label{COR:WWCOMP}
For every pure graphon $(J,W)$, the space $(J, r_{W\circ W})$ is
compact.
\end{corollary}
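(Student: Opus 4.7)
The plan is to deduce compactness of $(J, r_{W\circ W})$ from Theorem~\ref{THM:WEAKWW} combined with the Banach--Alaoglu theorem. By Theorem~\ref{THM:WEAKWW}, the $r_{W\circ W}$-topology on $J$ coincides with the weak topology on $J$, i.e., the pullback under the injective map $\Phi\colon x \mapsto W(x,\cdot)$ of the weak-$*$ topology $\sigma(L_\infty(J), L_1(J))$. Since $W$ takes values in $[0,1]$, the image $\Phi(J)$ lies in the closed unit ball $B$ of $L_\infty(J)$, which is weak-$*$ compact by Banach--Alaoglu, and metrizable because $L_1(J)$ is separable ($J$ being a standard probability space). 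It therefore suffices to show that $\Phi(J)$ is weak-$*$ closed in $B$: a closed subset of a compact metric space is compact, and the homeomorphism from Theorem~\ref{THM:WEAKWW} then transports compactness to $(J, r_{W\circ W})$.

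The technical core is the continuity of the kernel operator $T\colon L_\infty(J) \to L_1(J)$, $Tf(z) = \int f(y)\, W(y, z)\, dy$, when viewed as a map from $(L_\infty, w^*)$ to $(L_1, \|\cdot\|_1)$. For each fixed $z$, $W(\cdot, z) \in L_1(J)$, so weak-$*$ convergence $f_n \to g$ gives pointwise convergence $Tf_n(z) \to Tg(z)$; the uniform bound $|Tf_n(z)| \le 1$ then upgrades this to $L_1$-convergence via dominated convergence. Applied to $f_n = W(x_n, \cdot)$, any weak-$*$ convergent sequence of rows $W(x_n, \cdot) \to g$ therefore yields $(W \circ W)(x_n, \cdot) \to Tg$ in $L_1$, so $(x_n)$ is Cauchy in $r_{W \circ W}$. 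Writing $\Psi(x) := (W \circ W)(x,\cdot)$ for the isometric embedding of $(J, r_{W\circ W})$ into $L_1$ furnished by Lemma~\ref{LEM:HAUSDORFF}, the sequence $\Psi(x_n)$ converges in $L_1$ to $Tg$. Since $W$ is a Hilbert--Schmidt, hence compact, operator on $L_2(J)$ and $\Phi(J)$ is bounded in $L_2$, the image $\Psi(J) = T\Phi(J)$ is relatively compact in $L_2$, hence in $L_1$; this already shows $(J, r_{W\circ W})$ is totally bounded.

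The remaining step, which is also the main obstacle, is to produce an actual $x \in J$ with $W(x, \cdot) = g$ almost everywhere, so that $\Phi(J)$ is weak-$*$ closed and the Cauchy sequence above has a genuine limit inside $J$. Here purity enters essentially. The plan is to use Corollary~\ref{COR:DERAND1} to promote a.e.\ identities involving $W$ (such as $T(g - W(x, \cdot)) = 0$) to pointwise identities, and then to combine this with the $r_W$-completeness of pure graphons, which guarantees that the $L_1$-support of the row distribution is closed under the relevant limits, in order to locate $g$ inside $\Phi(J)$. It is precisely this step — showing that no weak-$*$ limit of rows can escape $J$ — where the definition of \emph{pure} (as opposed to merely twin-free) does the crucial work.
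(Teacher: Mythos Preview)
Your overall strategy is exactly the paper's: the corollary is stated there as an immediate consequence of Theorem~\ref{THM:WEAKWW}, whose proof invokes the assertion that ``the weak topology is compact'' on $J$, together with Alaoglu. You have correctly unpacked what that assertion actually requires---namely that $\Phi(J)=\{W(x,\cdot):x\in J\}$ be weak-$*$ closed in the unit ball of $L_\infty(J)$---and you rightly flag this as the crux. The paper glosses over this point entirely, so up to here you are being more careful than the paper, not less.

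The gap you isolate, however, is real, and your proposed plan cannot close it. Corollary~\ref{COR:DERAND1} only governs expressions built from rows $W(x,\cdot)$; it says nothing about an extraneous weak-$*$ limit $g$ that is not yet known to be a row. And $r_W$-completeness is of no use, since a sequence that is $r_{W\circ W}$-Cauchy need not be $r_W$-Cauchy. In fact $\Phi(J)$ is \emph{not} weak-$*$ closed in general, even for pure graphons. Purify Example~\ref{EXA:SYM}: identifying twins collapses $(0,1/2]$ to a sequence of atoms $p_1,p_2,\ldots$, pairwise at $r_W$-distance $1/4$, while the other half becomes a Cantor-type space; the result is $r_W$-complete with full support, hence pure. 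A direct computation gives $r_{W\circ W}(p_k,p_l)=\tfrac{1}{8}\bigl(2^{-(k+1)}+2^{-(l+1)}\bigr)$, so $(p_k)_{k\ge2}$ is $r_{W\circ W}$-Cauchy, yet $r_{W\circ W}(p_k,y)$ stays bounded away from $0$ for every fixed $y\in J$ (by at least $2^{-(l+4)}$ if $y=p_l$, and by $1/8$ if $y$ lies in the Cantor part). Equivalently, the weak-$*$ limit of $W(p_k,\cdot)$ is identically $1/2$ on the Cantor half, which is not $0$--$1$ valued and hence not a row of $W$. So the obstacle you name is not a detail to be filled in but a genuine defect: the argument the paper intends does not go through, and the corollary as stated appears to fail for this very example.
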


To compare the topology of $(J,r_W)$ with these, note that for any
two points $x,y\in J$, we have
\begin{equation}\label{EQ:CONTRACT}
r_{W\circ W}(x,y) \le r_W(x,y),
\end{equation}
which implies that the topology of $(J, r_W)$ is finer than the
topology of $(J, r_{W\circ W})$.

\subsection{Compact Graphons}\label{SEC:COMPACT}

Graphons for which the finer space $(J,r_W)$ is also compact seem to
have a special importance in combinatorics. Let us call such a
graphon a {\bf compact graphon}.

\begin{prop}\label{THM:COMPACT}
A pure graphon $(J,W)$ is compact if and only if $(J,r_W)$ and
$(J,r_{W\circ W})$ define the same topologies.
\end{prop}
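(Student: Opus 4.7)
The plan is to use the standard fact that a continuous bijection from a compact space to a Hausdorff space is a homeomorphism, together with two ingredients already in hand: the universal inequality \eqref{EQ:CONTRACT}, which tells us that the $r_W$-topology is always \emph{finer} than the $r_{W\circ W}$-topology, and Corollary \ref{COR:WWCOMP}, which tells us that $(J,r_{W\circ W})$ is \emph{always} compact.

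For the easier direction, assume the two metrics generate the same topology. Then, since $(J,r_{W\circ W})$ is compact by Corollary \ref{COR:WWCOMP}, the identical topological space $(J,r_W)$ is also compact. For a metric space, topological compactness and metric compactness coincide, so $(J,W)$ is a compact graphon in the sense of the definition.

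For the converse, suppose $(J,r_W)$ is compact. The identity map
\[
\id\colon (J,r_W)\longrightarrow (J,r_{W\circ W})
\]
is continuous by \eqref{EQ:CONTRACT}; it is obviously a bijection. Its domain is compact by assumption, and its codomain is Hausdorff because it is a metric space (Lemma \ref{LEM:HAUSDORFF}). Hence $\id$ is a continuous bijection from a compact space to a Hausdorff space, and therefore a homeomorphism: it maps closed sets to closed sets and so its inverse is continuous as well. This means the two topologies coincide.

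There is no real obstacle here; the only thing one must be a little careful about is that the two metrics need not be \emph{equivalent as metrics} (i.e.\ bi-Lipschitz or uniformly equivalent) even when they are topologically equivalent, but the statement only asserts the topological coincidence, which is exactly what the compact-to-Hausdorff argument delivers.
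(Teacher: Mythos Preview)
Your proof is correct and follows essentially the same approach as the paper. The paper's own proof is a terser version of yours: for the converse direction it simply says ``by the argument used before in the proof of Theorem \ref{THM:WEAKWW}, the coarser Hausdorff topology of $(J,r_{W\circ W})$ must be the same,'' which is exactly the compact-to-Hausdorff homeomorphism argument you spell out explicitly.
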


\begin{proof}
If the topologies $(J,r_W)$ and $(J,r_{W\circ W})$ are the same, then
$(J,r_W)$ is compact by Corollary \ref{COR:WWCOMP}. Conversely, if
$(J,r_W)$ is compact then, by the argument used before in the proof
of Theorem \ref{THM:WEAKWW}, the coarser Hausdorff topology of
$(J,r_{W\circ W})$ must be the same.
\end{proof}

\begin{example}\label{EXA:SYM}
Let $J=[0,1]$, $f(y)=\lfloor\log(1/y)\rfloor$, and define
\[
W(x,y)=
  \begin{cases}
    x_{f(y)}, & \text{if $x>1/2$ and $y\le 1/2$}, \\
    y_{f(x)}, & \text{if $x\le 1/2$ and $y>1/2$},\\
    0, & \text{otherwise},
  \end{cases}
\]
where $x=0.x_1x_2\dots$ and $y=0.y_1y_2\dots$ are the binary
expansions of $x$ and $y$, respectively. Then selecting one point
from each interval $[2^{-{k+1}},2^{-(k)}]$, we get an infinite number
of points in $([0,1],r_2)$ mutually at distance $1/4$, so $(J,W_r)$
is not compact, but by Corollary \ref{COR:WWCOMP}, $(J,r_{W\circ W})$
is compact. So the two topologies are different.
\end{example}

We conclude this section with an observation relating the topology of
$J$ to spectral theory.

\begin{lemma}\label{EIGEN}
Let $(J,W)$ be a pure graphon. Then every eigenfunction $f\in L_2(J)$
of $W$ as a kernel operator belonging to a nonzero eigenvalue is
continuous in the metric $r_{W\circ W}$ (and therefore also in
$r_W$).
\end{lemma}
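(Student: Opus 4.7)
The plan is to pass from $W$ to its operator square $W\circ W$, which is exactly the kernel that appears in the definition of $r_{W\circ W}$, and then exhibit the eigenfunction as a Lipschitz function in that metric.

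First, I would pick a canonical representative of $f$. Since $Wf=\lambda f$ a.e.\ and $\lambda\neq 0$, we may redefine $f(x):=\lambda^{-1}\int_J W(x,y)f(y)\,dy$ for every $x\in J$; since $(J,W)$ is pure, $W(x,\cdot)\in L_\infty(J)\subseteq L_2(J)$ for every $x$, so this pointwise formula makes sense everywhere and agrees with the original $f$ almost everywhere. Applying the bound $\|W(x,\cdot)\|_2\le 1$ inside the defining integral gives
\[
\|f\|_\infty\le |\lambda|^{-1}\|f\|_2<\infty,
\]
so $f$ is bounded.

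Next, iterating the eigenvalue equation gives $W^2 f=\lambda^2 f$, i.e.
\[
f(x)=\frac{1}{\lambda^2}\int_J (W\circ W)(x,y)\,f(y)\,dy\qquad(x\in J).
\]
Now for any $x,x'\in J$ we may estimate directly
\begin{align*}
|f(x)-f(x')|
&=\frac{1}{\lambda^2}\Bigl|\int_J\bigl((W\circ W)(x,y)-(W\circ W)(x',y)\bigr)\,f(y)\,dy\Bigr|\\
&\le \frac{\|f\|_\infty}{\lambda^2}\int_J\bigl|(W\circ W)(x,y)-(W\circ W)(x',y)\bigr|\,dy\\
&=\frac{\|f\|_\infty}{\lambda^2}\,r_{W\circ W}(x,x').
\end{align*}
Thus $f$ is Lipschitz (with constant $\|f\|_\infty/\lambda^2$) in the metric $r_{W\circ W}$, and in particular continuous. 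Continuity in $r_W$ then follows from the contraction inequality \eqref{EQ:CONTRACT}, which says the $r_W$-topology is finer.

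The only genuine subtlety is the first step: one must replace $f$ by its ``smoothed'' representative $\lambda^{-1}Wf$ to have a function that is pointwise defined on all of $J$, and observe that this is automatically bounded. Once this is done, the key idea---squaring $W$ so that the kernel difference becomes exactly the definition of $r_{W\circ W}$---makes the proof a one-line integral estimate. No serious obstacle arises beyond noticing that $W\circ W$, not $W$, is the right object to work with.
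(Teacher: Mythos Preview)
Your proof is correct and follows essentially the same route as the paper's: both show $f$ is bounded via $\|f\|_\infty\le|\lambda|^{-1}\|f\|_2$, express $f$ as $\lambda^{-2}\int_J (W\circ W)(x,y)f(y)\,dy$, and then read off continuity from the $L_1$-variation of $(W\circ W)(x,\cdot)$. Your version is arguably more direct, since you immediately obtain the explicit Lipschitz bound in $r_{W\circ W}$, whereas the paper first argues continuity in $r_W$ via Corollary~\ref{COR:SQUARE} and then reapplies the argument to the graphon $(J,W\circ W)$.
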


\begin{proof}
It suffices to prove that $f$ is continuous in $(J,r_W)$, since we
can apply the argument to the graphon $(J,W\circ W)$, which also has
$f$ as an eigenvector.

First, we have
\[
|f(x)|=\frac1{|\lambda|}\left|\int_J W(x,y)f(y)\,dy\right|\le
\frac1{|\lambda|}\|f\|_1\le \frac1{|\lambda|}\|f\|_2,
\]
and so $f$ is bounded. We know by Corollary \ref{COR:SQUARE} that
$W\circ W$ is continuous in $(J,r_W)$, and hence so is
\[
f=\frac1{\lambda^2}\int\limits_J (W\circ W)(x,y)f(y)\,dy.
\]
\end{proof}

\section{Thin graphons}\label{THIN}

\subsection{The main theorem}

We say that a bigraphon $W$ is {\it thin} if there is a bigraph $F$
such that $\tbp_\ind(F,W)=0$. Trivially, if $W$ is thin, then so is
its complementary bigraphon $1-W$.

We call a graphon {\it thin} if it is thin as a bigraphon. (Note: for
this, it is not enough to require $t_\ind(F,W)=0$ for some bipartite
graph $F$. For example, consider the graphon $U:~[0,1]^2\to[0,1]$
defined by $U(x,y)=U(y,x)=1/2$ if $x\in[0,1/2]$ and $y\in(1/2,1]$,
and $U(x,y)=1$ otherwise. As a bigraphon, this is not thin, but
satisfies $t_\ind(F,W)=0$ for every bigraph with at least 3 nodes in
one of the classes.

The {\it (upper) packing dimension} of a metric space $(M,d)$ is
defined as
\[
\limsup_{\eps\to 0}\frac{\log N(\eps)}{\log(1/\eps)},
\]
where $N(\eps)$ is the maximum number of points in $M$ mutually at
distance at least $\eps$. So this dimension is finite if and only if
there is a $d\ge0$ such that every set of points mutually at distance
at least $\eps$ has at most $\eps^{-d}$ elements. It is easy to see
that we could use instead of $N(\eps)$ the minimum number of sets of
diameter at most $\eps$ covering the space.

Our main goal is to prove:

\begin{theorem}\label{THM:FINDIM}
If a pure bigraphon $(J_1,J_2,W)$ is thin, then {\rm(a)}
$W(x,y)\in\{0,1\}$ almost everywhere, {\rm(b)} $J_1,J_2$ are compact,
and {\rm(c)} $J_1,J_2$ have finite packing dimension.
\end{theorem}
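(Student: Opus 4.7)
My plan handles the three parts in order, using (a) as the reduction that lets (b)--(c) be attacked combinatorially.

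\textbf{Part (a).} I argue by contrapositive. If $W$ is not $\{0,1\}$-valued almost everywhere, then for some $\alpha>0$ the set $B_\alpha=\{(x,y):\alpha\le W(x,y)\le 1-\alpha\}$ has positive product measure. With $k=|U_1|$ and $l=|U_2|$, whenever all $kl$ pairs $(x_u,y_v)$ lie in $B_\alpha$, every factor of the integrand of $\tbp_\ind(F,W;x,y)$ (each a $W$ or a $1-W$) is at least $\alpha$, so the integrand is at least $\alpha^{kl}$ on this event. Sidorenko's inequality for the complete bipartite graph $K_{k,l}$ (an iterated H\"older argument) gives $\tbp(K_{k,l},\chi_{B_\alpha})\ge\mu(B_\alpha)^{kl}$, and therefore $\tbp_\ind(F,W)\ge\alpha^{kl}\mu(B_\alpha)^{kl}>0$, contradicting thinness.

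\textbf{Parts (b) and (c), structural step.} Having reduced to $W\in\{0,1\}$ a.e., I identify $x\in J_1$ with $S_x=\{y:W(x,y)=1\}$, so $r_1(x,x')=\pi_2(S_x\triangle S_{x'})$, and write $A_T(y)=\{x:W(x,y_j)=T(j)\ \forall j\}$ for $y\in J_2^D$, $T\in\{0,1\}^D$. Applying Lemma~\ref{LEM:CONTIN}(b) with $S_1=\emptyset$ and $S_2=U_2$, the map $y\mapsto\tbp_{\emptyset,U_2}(F,W;y)$ is Lipschitz on $J_2^{U_2}$; its integral is $\tbp_\ind(F,W)=0$, and purity makes the measure on $J_2^{U_2}$ have full support, so this non-negative continuous function vanishes identically. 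Because $W\in\{0,1\}$ the inner integral factorises as $\prod_{i=1}^k\pi_1(A_{T_i}(y))$, where $T_i\in\{0,1\}^l$ is the $i$-th row of the adjacency matrix of $F$; hence for every $y\in J_2^l$ some $T_i$ satisfies $\pi_1(A_{T_i}(y))=0$. Defining the \emph{measure-theoretic VC dimension} $d^*$ as the largest $D$ for which some tuple $y\in J_2^D$ makes every cell $A_T(y)$ with $T\in\{0,1\}^D$ have positive $\pi_1$-measure, this forces $d^*\le l-1$: any witness $l$-tuple would make every $2^l$ cells --- including those indexed by the rows of $F$ --- positive, a contradiction.

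\textbf{Parts (b) and (c), packing bound.} Given $N$ points $\eps$-separated in $(J_1,r_1)$, I combine Haussler-type sampling with a purity-based perturbation. Fix a countable dense $\mathcal{D}\subset\bigcup_{D\ge 1}J_2^D$, and let $U_0=\bigcup\{A_T(\bar y):\bar y\in\mathcal{D},\ \pi_1(A_T(\bar y))=0\}$; this is a countable union of $\pi_1$-null sets, so $\pi_1(U_0)=0$. By purity each $r_1$-ball $B(x_i,\eps/8)$ has positive $\pi_1$-measure, so I can pre-perturb $x_i\to x_i'\in B(x_i,\eps/8)\setminus U_0$, giving a $3\eps/4$-separated family whose patterns on any $\mathcal{D}$-tuple correspond to positive-measure cells. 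Sampling $y_1,\ldots,y_t\sim\pi_2$ iid with $t\asymp\log N/\eps$, a union bound over pairs gives, with positive probability, pairwise distinct traces $\tau_i'=(W(x_i',y_r))_r$; combining continuity of $y\mapsto\pi_1(A_T(y))$ (Lemma~\ref{LEM:CONTIN}(b)) with density of $\mathcal{D}$, each $x_i'$ still lies in a positive-measure cell of the sample. Sauer--Shelah applied to the family of \emph{positive-measure} sample-traces --- no $(d^*+1)$-subset of sample coordinates can have all $2^{d^*+1}$ such cells, as that would measure-shatter them --- bounds their count by $\sum_{i=0}^{d^*}\binom{t}{i}=O(t^{d^*})=O((\log N/\eps)^{l-1})$, so $N\le C(F)\eps^{-(l-1)-o(1)}$. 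Hence $(J_1,r_1)$ has finite packing dimension (c), and combined with completeness and separability from purity (Theorem~\ref{THM:PURE}) it is compact (b). The case of $J_2$ is symmetric (apply to the transposed bigraphon, yielding bound $O(\eps^{-(k-1)-o(1)})$).

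The main obstacle I anticipate is the packing step: bridging the measure-theoretic VC bound $d^*\le l-1$ (which the thinness hypothesis controls via continuity and full support) with the combinatorial VC dimension needed by the classical Haussler-type lemma, since a specific $x_i$ may lie in a zero-measure cell. The perturbation trick above sidesteps this by moving each $x_i$ into a generic position, but verifying that the ``stays in positive-measure cells'' property transfers from the countable dense $\mathcal{D}$ to the actual sample is the delicate point, and relies essentially on the continuity of $y\mapsto\pi_1(A_T(y))$ furnished by Lemma~\ref{LEM:CONTIN} together with purity.
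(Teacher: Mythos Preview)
Your overall strategy is sound and differs from the paper's in several interesting respects. For (a), the paper applies Corollary~\ref{COR:DERAND} to force $\tbp_{\ind;U_1,\emptyset}(F,W;x)=0$ for \emph{every} $x$ and then specializes to the diagonal $x=(x_0,\ldots,x_0)$; your Sidorenko/H\"older route is an independent and equally valid argument. For (b), the paper proves compactness directly by showing that any weak limit of the functions $W(x_n,\cdot)$ must itself be $\{0,1\}$-valued (via the DE-dimension bound), whence weak convergence upgrades to $L_1$ convergence; you instead derive (b) from (c) through total boundedness, which is more economical but makes (b) wait on (c). For (c), the paper takes the finite family $\HH=\{\supp(W(x,\cdot)):x\in Z\}$ for a packing $Z\subset J_1$, deletes the union $A$ of the zero-measure atoms of the Boolean algebra it generates, shows the resulting family has VC-dimension $<|U_2|$, and then invokes the Koml\'os--Pach--Woeginger $\eps$-net theorem via Lemma~\ref{LEM:VC}. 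Your random-sampling route sidesteps that theorem and even yields a sharper exponent.

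There is, however, a genuine gap in your packing step. The claim ``each $x_i'$ still lies in a positive-measure cell of the sample'' does not follow from continuity of $y\mapsto\pi_1(A_T(y))$ together with density of $\mathcal{D}$: the trace $T=\tau_i'(y)$ that $x_i'$ realizes depends on $y$ \emph{discontinuously} (since $W$ is $\{0,1\}$-valued), so knowing that $x_i'$ sits in positive-measure cells for every $\bar y\in\mathcal{D}$ tells you nothing about a nearby $y\notin\mathcal{D}$ at which the trace has jumped to a different $T$. The clean repair is Fubini rather than density: for each fixed $y\in J_2^t$ the union of its zero-measure cells is $\pi_1$-null, so the set $\{(x,y): x\text{ lies in a zero-measure cell of }y\}$ is $(\pi_1\times\pi_2^t)$-null, and hence for $\pi_1$-almost every $x$ the set of bad samples is $\pi_2^t$-null. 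Perturb each $x_i$ inside $B(x_i,\eps/8)$ to avoid this null set (taken as a countable union over $t$ if you wish); then with probability~$1$ over the random sample every $x_i'$ lies in a positive-measure cell, and your Sauer--Shelah step (restricting to any $(d^*{+}1)$ coordinates kills at least one pattern, so the trace family has VC-dimension $\le d^*$) goes through as written. This is, incidentally, the probabilistic analogue of the paper's ``delete the null atoms'' trick.
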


\begin{remark}\label{REM:FINDIM}
The proof will show that if $t_\ind(F,W)=0$ for a bigraph $F$ with
$k$ nodes, then the packing dimension of $J_i$ is bounded by $10|F|$.
\end{remark}

Before giving the proof, we describe a class of examples, and then
recall some facts about the Vapnik-\v Cervonenkis dimension.

\begin{example}\label{EXA:ASYM}
Let $V$ be a finite or countable set, $\pi$, a probability measure on
$V$, and define $J_1=[0,1]^V$, $J_2=[0,1]\times V$, with the power
measure $\mu_1$ on $J_1$ and the product measure $\mu_2$ on $J_2$. We
define a bigraphon on $J_1\times J_2$ by
\[
W(x,y)=\one_{t\le x_i}
\]
for $x=(x_i:~i\in S)$ and $y=(t,i)$. We can metrize this bigraphon by
\[
r_1(x,x')=\sum_{i\in V} \pi(i)|x_i-x_i'|
\]
for $x=(x_i:~i\in S),~x'=(x'_i:~i\in S)\in J_1$, and
\[
r_2(y,y')=
  \begin{cases}
    |t-t'| & \text{if $i=1'$}, \\
    t+t'-2tt' & \text{otherwise}.
  \end{cases}
\]
for $y=(t,i),~y'=(t',i')\in J_2$.

If $V$ is finite, then $(J_1,r_1)$ has dimension $|V|$, while
$(J_2,r_2)$ has dimension $1$, and both are compact. These facts also
follow if we observe that $W$ is thin. Indeed, if $F$ denotes the
matching with $|V|+1$ edges, then $\tbp_\ind(F,W)=0$, since among any
$|V|+1$ points in $J_2$, there are two points of the form $y=(t,i)$
and $y'=(t',i)$ with $t<t'$, and then $W(.,(t,i))\ge W(.,(t',i))$.

If $V$ is infinite, then $(J_1,r_1)$ is infinite dimensional but
compact, while $(J_2,r_2)$ is not compact.
\end{example}

\begin{example}\label{EXA:ASYM2}
Let $J_1=J_2=[0,1]$, and let $W(x,y)=x_{f(y)}$, where
$x=0.x_1x_2\dots$ is the binary expansion of $x$, and
$f(y)=\lceil\log(1/y)\rceil$. Then for $x=0.x_1x_2\dots$ and
$x'=0.x'_1x'_2\dots$ we have $r_1(x,x')=\sum_{k=1}^\infty
2^{-k}|x_k-x'_k|$, and from here is is easy to see that $([0,1],r_1)$
is compact. Furthermore, if $S\subseteq[0,1]$ is a set of points
mutually more than $2^{-n}$ apart, then any two elements of $S$ must
differ in one of their first $n$ digits, and so their number is at
most $2^n$. Hence the packing dimension of $([0,1],r_1)$ is $1$.

On the other hand, selecting a point $y_k\in[2^{-k},2^{-(k-1)}]$, we
get an infinite number of points in  $([0,1],r_2)$ mutually at
distance $1/2$, so this space is not compact and infinite
dimensional.
\end{example}

\subsection{Vapnik-\v{C}ervonenkis dimension}

For any set $V$ and family of subsets $\HH\subseteq 2^V$, a set
$S\subseteq V$ is called {\it shattered}, if for every $X\subseteq S$
there is a $Y\in\HH$ such that $X=Y\cap S$. The {\it
Vapnik-\v{C}ervonenkis dimension} or {\it VC-dimension} $\dim_{\rm
VC}(\HH)$ of a family  of sets is the supremum of cardinalities of
shattered sets \cite{VC}. For us, $k$ will be always finite.

Let $V$ be a probability space and $\HH$, a family of measurable
subsets of $V$. A finite subfamily $\HH'$ is {\it qualitatively
independent} if all the $2^{|\HH'|}$ atoms of the set algebra they
generate have positive measure. The {\it dual essential
Vapnik-\v{C}ervonenkis dimension}, or briefly {\it DE-dimension}, of
$\HH$ is a supremum of all cardinalities of qualitatively independent
subfamilies of $\HH$.

We recall two basic facts about VC-dimension:

\begin{lemma}[Sauer-Shelah Lemma]\label{LEM:SaSh}
If a family $\HH$ of subsets of an $m$-element set has VC-dimension
$k$, then
\[
|\HH|\le 1+m+\dots+\binom{m}{k}.
\]
\end{lemma}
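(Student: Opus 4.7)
The plan is to prove the bound by induction on $m = |V|$, with $k$ as a secondary parameter. The base cases $m = 0$ and $k = 0$ each force $|\HH| \le 1 = \binom{m}{0}$, matching the right-hand side.

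For the inductive step, fix any element $x \in V$ and split $\HH$ along $x$ into two auxiliary families of subsets of $V' = V \setminus \{x\}$:
\[
\HH_0 = \{H \setminus \{x\} : H \in \HH\}, \qquad \HH_1 = \{A \subseteq V' : A \in \HH \text{ and } A \cup \{x\} \in \HH\}.
\]
A simple double-counting argument — for each $A \subseteq V'$ compare how many of $A, A \cup \{x\}$ lie in $\HH$ — gives $|\HH| = |\HH_0| + |\HH_1|$. Clearly $\dim_{\rm VC}(\HH_0) \le k$, since any set shattered by $\HH_0$ is automatically shattered by $\HH$. The crucial observation is that $\dim_{\rm VC}(\HH_1) \le k - 1$: if $S \subseteq V'$ were shattered by $\HH_1$, then $S \cup \{x\}$ would be shattered by $\HH$, because any $X \subseteq S$ realized as $A \cap S$ with $A \in \HH_1$ also gives $(A \cup \{x\}) \cap (S \cup \{x\}) = X \cup \{x\}$, so both $X$ and $X \cup \{x\}$ are cut out of $S \cup \{x\}$ by members of $\HH$.

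Applying the inductive hypothesis to both pieces and summing via the Pascal identity $\binom{m-1}{i} + \binom{m-1}{i-1} = \binom{m}{i}$ then yields $|\HH| \le \sum_{i=0}^k \binom{m}{i}$, as required. I do not expect any serious obstacle here; the only mild care needed is in the VC-dimension analysis of $\HH_1$, and the remainder is essentially bookkeeping. An equally clean alternative is the \emph{down-shift} argument: iteratively replace each $H \in \HH$ containing $x$ with $H \setminus \{x\}$ whenever the latter is not already in $\HH$, producing a downward-closed family $\HH^*$ of the same cardinality and with VC-dimension at most that of $\HH$. Since every member of a downward-closed family is itself shattered, all sets in $\HH^*$ have size at most $k$, and the bound $|\HH^*| \le \sum_{i=0}^k \binom{m}{i}$ follows directly by counting subsets of $V$ of size $\le k$.
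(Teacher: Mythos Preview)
Your argument is correct: this is the standard Sauer--Shelah induction (together with the equally standard shifting alternative), and both versions are executed cleanly. Note, however, that the paper does not actually supply a proof of this lemma. It is introduced with the phrase ``We recall two basic facts about VC-dimension'' and is simply stated as a classical result, with the original reference \cite{VC} implicit; there is nothing in the paper to compare your argument against. Your write-up would serve perfectly well as the omitted proof.
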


For a family $\HH$ of sets, we denote by $\tau(\HH)$ the minimum
cardinality of a set meeting every member of $\HH$. The following
basic fact about VC-dimension was proved by Koml\'os, Pach and
Woeginger \cite{KPW}, based on the results of Vapnik and
\v{C}ervonenkis \cite{VC} (we do not state it in its sharpest form):

\begin{theorem}\label{LEM:TAU-STAR}
Let $J$ be a probability space and, $\HH$ a family of measurable
subsets of $J$ such that every $A\in\HH$ has measure at least $\eps$.
Suppose that $\HH$ has finite VC-dimension $k$. Then
\[
\tau(\HH)\le 8k\frac1\eps\log\frac1\eps.
\]
\end{theorem}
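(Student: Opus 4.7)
The plan is to apply the $\eps$-net theorem of Haussler and Welzl: a random sample of size $m = O(\frac{k}{\eps}\log\frac{1}{\eps})$ from $J$ hits every $A\in\HH$ with $\pi(A)\ge\eps$ with positive probability, and the desired bound on $\tau(\HH)$ follows by fixing such a sample. I would prove this via the standard double-sampling trick, feeding off the Sauer-Shelah Lemma (Lemma \ref{LEM:SaSh}).

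First I would draw two independent samples $X_1,X_2$ of size $m$ from $J$ (value of $m$ to be set at the end). Let $E_1$ be the event that some $A\in\HH$ with $\pi(A)\ge\eps$ satisfies $X_1\cap A=\emptyset$, and let $E_2$ be the stronger event that some such $A$ satisfies additionally $|X_2\cap A|\ge \eps m/2$. Since $|X_2\cap A|$ is binomial with mean at least $\eps m$, a one-line Chebyshev (or Chernoff) estimate gives $\Pr[E_2\mid E_1]\ge 1/2$ once $\eps m$ is a small absolute constant, and hence $\Pr[E_1]\le 2\Pr[E_2]$.

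Next I would bound $\Pr[E_2]$ by a symmetrization. Condition on the combined multiset $Z=X_1\cup X_2$ of size $2m$; then $(X_1,X_2)$ is a uniformly random ordered partition of $Z$ into two $m$-subsets. For any single $A\in\HH$, the conditional probability that $A\cap Z\subseteq X_2$ with $|A\cap Z|\ge \eps m/2$ is at most $2^{-\eps m/2}$. The key point is that $E_2$ depends only on the trace $A\cap Z$, and by Lemma \ref{LEM:SaSh} the number of distinct traces on a $2m$-set is at most $1+2m+\cdots+\binom{2m}{k}\le (2m)^k$ (for $m\ge k$). A union bound yields
\[
\Pr[E_2]\le (2m)^k \cdot 2^{-\eps m/2}.
\]

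Finally, I would substitute $m=\lceil 8k\eps^{-1}\log(1/\eps)\rceil$ and verify that this makes $2(2m)^k\cdot 2^{-\eps m/2}<1$, so $\Pr[E_1]<1$ and some realization of $X_1$ meets every $A\in\HH$ with $\pi(A)\ge\eps$; this gives $\tau(\HH)\le m$, as required. The main obstacle I anticipate is bookkeeping the constant $8$ cleanly: one must balance the Chebyshev step, the threshold $\eps m/2$ in the definition of $E_2$, and the final inequality $(2m)^k\le 2^{\eps m/4}$. A minor subsidiary issue is that $\HH$ need not be countable, but since only traces on finite sets ever appear, no measurability pathology arises; alternatively one reduces to finite subfamilies and passes to the limit.
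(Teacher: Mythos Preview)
The paper does not prove this theorem; it is quoted as a known result from Koml\'os, Pach and Woeginger \cite{KPW}, building on Vapnik--\v{C}ervonenkis \cite{VC}, and is used as a black box. Your proposal is precisely the standard double-sampling argument from that literature (Haussler--Welzl, sharpened in \cite{KPW}), so there is nothing in the paper to compare against and your outline is the expected proof.

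One small caution on your measurability remark: the reduction to traces on finite sets handles the union bound inside the conditioning on $Z$, but you still need the event $E_1$ itself to be measurable, which for an uncountable $\HH$ is not automatic. The usual fix (and the one implicit in \cite{VC,KPW}) is a mild regularity assumption such as the existence of a countable subfamily with the same traces, or simply to prove the bound for every finite subfamily and observe that $\tau(\HH)=\sup\tau(\HH')$ over finite $\HH'\subseteq\HH$; your ``pass to the limit'' alternative is exactly this. Apart from that, the only work left is the constant bookkeeping you already flagged.
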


We need a couple of further facts. For a family $\HH$ of sets, let
$\HH(\triangle)\HH=\{A\triangle B:~A,B\in\HH\}$.

\begin{lemma}\label{LEM:VC-DIFF}
For every family of sets, $\dim_{\rm VC}(\HH(\triangle)\HH)\le
10\dim_{\rm VC}(\HH)$.
\end{lemma}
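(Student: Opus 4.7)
The plan is to reduce this to a straightforward Sauer--Shelah counting argument. Set $k=\dim_{\rm VC}(\HH)$ and suppose, for contradiction, that there is a set $S\subseteq V$ of size $d=10k$ that is shattered by $\HH(\triangle)\HH$. The whole proof will hinge on comparing $2^d$ (the number of subsets of $S$ that must appear as $T\cap S$ for some $T\in\HH(\triangle)\HH$) with an upper bound on the number of possible traces $T\cap S$ that can actually arise.

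First I would observe the elementary identity $(A\triangle B)\cap S=(A\cap S)\triangle(B\cap S)$. This shows that the trace of the family $\HH(\triangle)\HH$ on $S$ is contained in the set of pairwise symmetric differences of traces of $\HH$ on $S$, and therefore its cardinality is bounded by $|\HH|_S|^2$, where $\HH|_S=\{A\cap S: A\in\HH\}$. Since shattering $S$ by $\HH(\triangle)\HH$ requires exactly $2^d$ distinct traces on $S$, this yields the inequality
\[
2^{d}\le |\HH|_S|^{2}.
\]

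Next I would apply the Sauer--Shelah Lemma (\ref{LEM:SaSh}) to $\HH$, which still has VC-dimension at most $k$ even after restriction to $S$; together with the standard estimate $\sum_{i=0}^{k}\binom{d}{i}\le (ed/k)^{k}$ valid for $d\ge k$, this gives $|\HH|_S|\le (ed/k)^{k}=(10e)^{k}$. Plugging into the previous display yields
\[
2^{10k}\le (10e)^{2k},
\]
i.e.\ $1024^{k}\le (100e^{2})^{k}$. Since $100e^{2}\approx 738.9<1024$, this is a contradiction for every $k\ge 1$. (If $k=0$, then $\HH$ has at most one element, $\HH(\triangle)\HH=\{\emptyset\}$, and the statement is trivial.) Hence no set of size $10k$ can be shattered by $\HH(\triangle)\HH$, proving $\dim_{\rm VC}(\HH(\triangle)\HH)<10k\le 10\dim_{\rm VC}(\HH)$.

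There is no real obstacle here: the only thing to check carefully is the numerical slack in the inequality $2^{10}>(10e)^{2}$, which is comfortable. If one wanted a sharper constant, the same scheme shows that any constant $c$ satisfying $2^{c}>(ce)^{2}$ (equivalently $c>2\log_{2}(ce)$) works, so the constant $10$ in the lemma is chosen for convenience rather than tightness.
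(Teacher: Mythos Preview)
Your proof is correct and follows essentially the same approach as the paper's: both use the identity $(A\triangle B)\cap S=(A\cap S)\triangle(B\cap S)$ to bound the number of traces of $\HH(\triangle)\HH$ on $S$ by the square of the number of traces of $\HH$, and then invoke Sauer--Shelah. The only difference is cosmetic: the paper lets $m=|S|$ be arbitrary and concludes $m\le 10k$ ``by standard calculation,'' whereas you fix $m=10k$ and spell out the numerics via the bound $\sum_{i\le k}\binom{d}{i}\le (ed/k)^k$, obtaining the (nominally stronger) strict inequality $\dim_{\rm VC}(\HH(\triangle)\HH)<10k$.
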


\begin{proof}
Set $k=\dim_{\rm VC}(\HH)$. Let $S$ be a subset of $V=\cup\HH$ with
$m$ elements that is shattered by $\HH(\triangle)\HH)$. Then every
$X\subseteq S$ arises as $X=(A\triangle B)\cap S$, where $A,B\in\HH$.
Since $(A\triangle B)\cap S= (A\cap S)\triangle(B\cap S)$, the number
of different sets of the form $A\cap S$ is at least $2^{m/2}$. By the
Sauer-Shelah Lemma, this implies that
\[
2^{m/2}\le 1+m+\dots+\binom{m}{k},
\]
whence $m\le 10k$ follows by standard calculation.
\end{proof}

\begin{lemma}\label{LEM:VC}
Let $\HH$ be a family of measurable sets in a probability space with
VC-dimension $k$ such that $\pi(A\triangle B)\ge\eps$ for all
$A,B\in\HH$. Then $|\HH|\le (80k)^k \eps^{-20k}$.
\end{lemma}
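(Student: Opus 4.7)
The plan is to combine the three preparatory facts (Lemmas \ref{LEM:SaSh}, \ref{LEM:VC-DIFF} and Theorem \ref{LEM:TAU-STAR}) via a standard hitting-set / trace argument: use the pairwise separation hypothesis on $\HH$ to construct a \emph{finite} witness set $T$ such that the map $A\mapsto A\cap T$ separates the elements of $\HH$, then use Sauer-Shelah to count the possible traces.

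In detail, first set
\[
\HH\triangle\HH=\{A\triangle B:\ A,B\in\HH,\ A\neq B\}.
\]
By hypothesis every member of $\HH\triangle\HH$ has measure at least $\eps$, and by Lemma \ref{LEM:VC-DIFF} its VC-dimension is at most $10k$. Applying Theorem \ref{LEM:TAU-STAR} to this family yields a finite set $T$ of points, with
\[
|T|\ \le\ 8\cdot 10k\cdot\tfrac{1}{\eps}\log\tfrac{1}{\eps}\ =\ 80k\,\eps^{-1}\log(1/\eps),
\]
that meets every nonempty symmetric difference $A\triangle B$ with $A,B\in\HH$. For distinct $A,B\in\HH$ the point of $T$ witnessing $T\cap(A\triangle B)\neq\emptyset$ lies in exactly one of $A,B$, so $A\cap T\neq B\cap T$. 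Hence the trace map $A\mapsto A\cap T$ is injective on $\HH$, and $|\HH|$ is bounded by the number of distinct traces $\{A\cap T:A\in\HH\}$.

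Next, the trace family on $T$ still has VC-dimension at most $k$ (any subset of $T$ shattered by the traces is shattered by $\HH$ itself). Therefore by Sauer-Shelah,
\[
|\HH|\ \le\ \sum_{i=0}^{k}\binom{|T|}{i}\ \le\ |T|^{k}\ \le\ (80k)^{k}\,\eps^{-k}(\log(1/\eps))^{k}.
\]
Since $\log(1/\eps)\le \eps^{-19}$ for every $\eps\in(0,1]$ (equivalently $u\le e^{19u}$ for $u\ge 0$), this simplifies to $|\HH|\le (80k)^{k}\eps^{-20k}$, as claimed.

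The only real content is the first two steps — translating pairwise separation plus small measure into ``every $A\triangle B$ gets hit by a small set'' — and these reduce immediately to Theorem \ref{LEM:TAU-STAR} after passing to the symmetric-difference family. The rest is bookkeeping, and the generous constant $20$ in the exponent is there precisely so the messy $\log(1/\eps)$ factor produced by the $\tau$-bound can be absorbed without fuss.
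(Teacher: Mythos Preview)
Your proof is correct and follows the same route as the paper: pass to $\HH\triangle\HH$, bound its VC-dimension by $10k$ via Lemma \ref{LEM:VC-DIFF}, apply Theorem \ref{LEM:TAU-STAR} to get a small transversal $T$, and then count traces on $T$ by Sauer--Shelah. The only difference is that in the final step you apply Sauer--Shelah with the VC-dimension $k$ of $\HH$ itself (which is legitimate, since any subset of $T$ shattered by the trace family is already shattered by $\HH$), whereas the paper's proof uses $10k$ there and ends with the looser constant $(80k)^{10k}$ rather than the stated $(80k)^k$.
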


\begin{proof}
Consider the family $\HH'=\HH(\triangle)\HH$. Every $A\in\HH'$ has
$\pi(A)\ge 1/\eps$, and $\dim_{\rm VC}(\HH')\le 10k$ by Lemma
\ref{LEM:VC-DIFF}. Hence by Theorem \ref{LEM:TAU-STAR}, we have
\[
\tau(\HH')\le 80 k \frac1\eps\ln\frac1\eps.
\]
Let $S\subseteq \cup \HH$ be a set of size $\tau(\HH')$ meeting every
symmetric difference $A\triangle B$ ($A,B\in\HH$). Then the sets
$S\cap A$, $A\in\HH$ are all different. By the Sauer-Shelah Lemma,
this implies that
\[
|\HH|\le 1+|S|+\dots+\binom{|S|}{10k} < |S|^{10k} \le \left(80 k
\frac1\eps\ln\frac1\eps\right)^{10k}< (80k)^{10 k} \eps^{-20k}.
\]
\end{proof}

\subsection{VC-dimension and graphons}

\begin{lemma}\label{LEM:THIN}
Let $(J_1,J_2,W)$ be a pure $0$-$1$ valued bigraphon. Then $W$ is
thin if and only if the DE-dimension of the family
$\RR_W=\{\supp(W(x,.)):~x\in T_1\}$ is finite.
\end{lemma}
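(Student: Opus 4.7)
My plan is to describe $\tbp_\ind(F,W;\cdot)$ explicitly through the set algebra generated by the fibres $A_x := \supp W(x,\cdot)$, and then read off both directions from a single identity. Given $x_1,\dots,x_k \in T_1$ and a pattern $\sigma \in \{0,1\}^k$, write $B_\sigma(x) := \bigcap_{i:\sigma_i=1} A_{x_i} \cap \bigcap_{i:\sigma_i=0}(J_2\setminus A_{x_i})$ for the corresponding atom of the algebra generated by $A_{x_1},\dots,A_{x_k}$. For a bigraph $F=(U_1,U_2,E)$ with $U_1 = \{1,\dots,k\}$, let $\sigma_j \in \{0,1\}^k$ be the column pattern $(\sigma_j)_i = \one_{ij \in E}$. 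Since $W$ takes values in $\{0,1\}$, the integrand defining $\tbp_{\ind;U_1,\emptyset}(F,W;x)$ factors as $\prod_{j \in U_2} \one_{y_j \in B_{\sigma_j}(x)}$, and Fubini gives the key identity
\begin{equation}\label{EQ:PLAN}
\tbp_{\ind;U_1,\emptyset}(F,W;x_1,\dots,x_k) \;=\; \prod_{j \in U_2} \pi_2\bigl(B_{\sigma_j}(x_1,\dots,x_k)\bigr).
\end{equation}
Both implications will fall out by reading \eqref{EQ:PLAN} in opposite directions.

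For ``DE-dimension finite $\Rightarrow$ $W$ thin'', suppose the DE-dimension of $\RR_W$ is at most $d$. I would take $F$ to be the \emph{universal} bigraph with $U_1=\{1,\dots,d+1\}$, $U_2 = \{0,1\}^{U_1}$, and edge set $\{(i,\sigma): \sigma_i=1\}$, so that every pattern in $\{0,1\}^{U_1}$ appears as some column $\sigma_j$. For any $(x_1,\dots,x_{d+1})\in T_1^{d+1}$, the family $\{A_{x_i}\}$ is by hypothesis \emph{not} qualitatively independent, so some atom has measure zero, and the product in \eqref{EQ:PLAN} vanishes. Since in a pure bigraphon we may identify $T_1$ with $J_1$ (Theorem~\ref{THM:PURE} together with Lemma~\ref{LEM:TYP}), integrating over $J_1^{U_1}$ yields $\tbp_\ind(F,W)=0$, so $W$ is thin.

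Conversely, assume the DE-dimension is infinite, and let $F=(U_1,U_2,E)$ be an arbitrary bigraph with $k=|U_1|$. Pick a qualitatively independent subfamily $A_{x_1},\dots,A_{x_k}$ in $\RR_W$; then \emph{every} atom $B_\sigma(x)$ has positive measure, so \eqref{EQ:PLAN} is strictly positive at this particular $x$. Expanding the factors $(1-W(x_i,y_j))$ by inclusion--exclusion realises $\tbp_{\ind;U_1,\emptyset}(F,W;\cdot)$ as a finite signed sum of functions of the form $\tbp_{U_1,\emptyset}\bigl((U_1,U_2,E'),W;\cdot\bigr)$; each summand is Lipschitz on $J_1^{U_1}$ by Lemma~\ref{LEM:CONTIN}(b) applied with $S_1=U_1$, $S_2=\emptyset$ (the edge-separation hypothesis is vacuous). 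Hence $\tbp_{\ind;U_1,\emptyset}(F,W;\cdot)$ is continuous, so it remains positive on an open neighbourhood $\UU$ of $(x_1,\dots,x_k)$; purity forces the product measure on $J_1^{U_1}$ to assign positive mass to $\UU$, and integrating gives $\tbp_\ind(F,W)>0$. As $F$ was arbitrary, $W$ is not thin.

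The only non-routine step is the passage from ``one qualitatively independent tuple exists'' to ``a positive-measure set of tuples realises the $F$-pattern'', and this is precisely where the full-support part of purity combines with the continuity of $\tbp_{\ind;U_1,\emptyset}$ obtained by inclusion--exclusion from Lemma~\ref{LEM:CONTIN}. Everything else is a direct unwrapping of the definitions via \eqref{EQ:PLAN}.
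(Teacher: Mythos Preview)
Your proof is correct and follows essentially the same route as the paper's. Both directions use the identical ideas: for ``finite $\Rightarrow$ thin'' you take the same universal bigraph on $d+1$ versus $2^{d+1}$ vertices, and for ``infinite $\Rightarrow$ not thin'' you pick a qualitatively independent $k$-tuple, observe that the partial density \eqref{EQ:PLAN} is strictly positive there, and pass to a positive-measure set via continuity and full support. The only cosmetic difference is that the paper packages the continuity step by invoking Corollary~\ref{COR:DERAND} directly (the contrapositive gives exactly what you need), whereas you unwind it explicitly through inclusion--exclusion and Lemma~\ref{LEM:CONTIN}(b); your version is slightly more self-contained, but the content is the same.
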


\begin{proof}
Suppose that this dimension is infinite. We claim that
$\tbp_\ind(F,W)>0$ for every bipartite graph $F=(U,U',E)$. Let
$S\subseteq J_1$ be a set such that the subfamily
$\{\supp(W(x,.)):~x\in T_1\}$ is qualitatively independent. To each
$i\in U$, assign a value $x_i\in S$ bijectively. By Corollary
\ref{COR:DERAND}, the set of points $y\in J_2$ such that
$\supp(W(.,y))\cap S = \{x_i:~i\in N(j)\}$ has positive measure for
each $j\in U'$. Hence $\tbp_\ind(F,W)>0$.

Conversely, suppose that $k=\dim(\RR_W)$ is finite. Let $F$ denote
the bipartite graph with $k+1$ nodes in one class $U$ and $2^{k+1}$
nodes in the other class $U'$, in which the nodes in $U'$ have all
different neighborhoods. Then $\tbp_\ind(F,W)=0$.
\end{proof}

\begin{remark}\label{RF-VC-QUANT}
The proof above in fact gives the following quantitative result:
$\tbp_\ind(F,W)=0$ for some bigraph $F$ with $k$ nodes in its smaller
bipartition class if and only if $\dim_{\rm DE}(\RR_W)<k$.
\end{remark}

\begin{proof*}{Theorem \ref{THM:FINDIM}}
We may assume that $W$ is pure.

(a) Suppose that the bigraph $F=(U_1,U_2,E)$ satisfies
$\tbp_\ind(F,W)=0$. Then for almost all $x\in J_1^{U_1}$, we have
$\tbp_{U_1,\ind}(F,W;x)=0$. By Corollary \ref{COR:DERAND}, it follows
that $\tbp_{U_1,\ind}(F,W;x)=0$ for every $x$. In particular,
$\tbp_{U_1,\ind}(F,W;x_0,\dots,x_0)=0$ for all $x_0\in J_1$. But for
this substitution,
\[
\tbp_{U_1,\ind}(F,W;x_0,\dots,x_0) = \int\limits_{J_2^{V_2}}
\prod_{j\in J_2} W(x_0,y_j)^{d_F(j)} (1-W(x_0,y_j))^{|U_1|-d_F(j)},
\]
and so for every $x_0$ we must have $W(x_0,y_0)\in\{0,1\}$ for almost
all $y_0$.

\smallskip

(b) By Theorem \ref{THM:COMPACT} it suffices to prove that if
$W(x_n,.)$, $n=1,2,\dots$ weakly converges to $f$, i.e.,
\[
\lim_{n\to\infty} \int\limits_S W(x_n,y)\,dy \to \int\limits_S
f(y)\,dy
\]
for every measurable set $S\subseteq J_2$, then it is also convergent
in $L_1$.

\begin{claim}\label{CLAIM:NON-RAND}
The weak limit function $f$ is almost everywhere $0$-$1$ valued.
\end{claim}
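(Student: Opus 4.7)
The plan is to contradict the thinness of $W$ by constructing arbitrarily large qualitatively independent subfamilies of $\RR_W$, and then invoking Lemma \ref{LEM:THIN}.

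First I would reduce to a clean setup. Suppose for contradiction that $f$ is not almost everywhere $\{0,1\}$-valued; then there exist $\alpha>0$ and a measurable set $A\subseteq J_2$ with $\pi_2(A)>0$ and $\alpha\le f(y)\le 1-\alpha$ for every $y\in A$. Write $S_n=\supp W(x_n,\cdot)\in\RR_W$; by part (a) we may treat $W(x_n,\cdot)$ as the indicator $\one_{S_n}$.

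I would then construct indices $n_1<n_2<\cdots$ by induction on $k$ so that the restricted family $\{S_{n_j}\cap A\}_{j=1}^{k}$ is qualitatively independent in the probability space $A$, i.e., all $2^k$ atoms
\[
B_\eps^{(k)}=\bigcap_{j:\eps_j=1}(S_{n_j}\cap A)\cap\bigcap_{j:\eps_j=0}(A\setminus S_{n_j}),\qquad \eps\in\{0,1\}^k,
\]
have positive $\pi_2$-measure. The inductive step is the heart of the argument. Given $n_1,\dots,n_k$, the hypothesis $\pi_2(B_\eps^{(k)})>0$ together with $B_\eps^{(k)}\subseteq A$ forces
\[
\int_{B_\eps^{(k)}} f\,d\pi_2\in\bigl[\alpha\pi_2(B_\eps^{(k)}),(1-\alpha)\pi_2(B_\eps^{(k)})\bigr]\subset\bigl(0,\pi_2(B_\eps^{(k)})\bigr),
\]
so the limit is strictly interior. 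By the weak convergence hypothesis applied to the measurable set $B_\eps^{(k)}$, we have $\pi_2(S_n\cap B_\eps^{(k)})\to\int_{B_\eps^{(k)}} f\,d\pi_2$, so for all sufficiently large $n$ both $\pi_2(S_n\cap B_\eps^{(k)})>0$ and $\pi_2(B_\eps^{(k)}\setminus S_n)>0$. Taking the maximum threshold over the finitely many $\eps\in\{0,1\}^k$, I pick $n_{k+1}>n_k$ beyond this threshold simultaneously for every $\eps$; the $2^{k+1}$ new atoms $B_\eps^{(k+1)}$ then all have positive measure.

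Since every atom of $\{S_{n_j}\}_{j=1}^{K}$ in $J_2$ contains the corresponding atom $B_\eps^{(K)}$ carved out inside $A$, qualitative independence in $A$ upgrades to qualitative independence in $J_2$. As $K$ is arbitrary, $\dim_{\rm DE}(\RR_W)=\infty$, which by Lemma \ref{LEM:THIN} contradicts the thinness of $W$. Hence $f$ is almost everywhere $\{0,1\}$-valued. I do not foresee a serious obstacle: the argument is a clean induction whose only essential input is the strict interiority $\int_{B_\eps^{(k)}} f\in(0,\pi_2(B_\eps^{(k)}))$, which holds precisely because $f$ is bounded away from $0$ and $1$ on $A$ and is exactly the feature that $\{0,1\}$-valuedness would destroy.
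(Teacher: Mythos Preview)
Your argument is correct and follows essentially the same approach as the paper's: assume $f$ takes values in $[\alpha,1-\alpha]$ on a positive-measure set, then use weak convergence to recursively split each current atom by a late-enough $S_n$, obtaining arbitrarily large qualitatively independent subfamilies of $\RR_W$ and contradicting Lemma~\ref{LEM:THIN}. The only cosmetic differences are that the paper intersects with the bad set $Y$ from the outset (defining $S_n=\supp(W(x_n,\cdot))\cap Y$), whereas you intersect at the level of the atoms and then observe that qualitative independence lifts from $A$ to $J_2$; both are equivalent.
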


Suppose not, then there is an $\eps>0$ and a set $Y\subseteq J_2$
with positive measure such that $\eps\le f(x)\le 1-\eps$ for $x\in
Y$. Let $S_n=\supp(W(x_n,.))\cap Y$. We select, for every $k\ge 1$,
$k$ indices $n_1,\dots n_k$ so that the Boolean algebra generated by
$S_{n_1},\dots S_{n_k}$ has $2^k$ atoms of positive measure. If we
have this for some $k$, then for every atom $A$ of the boolean
algebra
\[
\lambda(A\cap S_n) = \int\limits_A W(x,y_n)\,dx \longrightarrow
\int\limits_A f(x)\,dx \qquad (n\to\infty),
\]
and so if $n$ is large enough then
\[
\frac{\eps}2 \lambda(A) \le \lambda(A\cap S_n) \le
\Bigl(1-\frac{\eps}2\Bigr) \lambda(A).
\]
If $n$ is large enough, then this holds for all atoms $A$, and so
$S_n$ cuts every previous atom into two sets with positive measure,
and we can choose $n_{k+1}=n$.

But this means that the DE-dimension of the supports of the $W(x,.)$
is infinite, contradicting Lemma \ref{LEM:THIN}. This proves Claim
\ref{CLAIM:NON-RAND}.

So we know that $f(x)\in\{0,1\}$ for almost all $x$,  and hence
\[
\|f-W(.,y_n)\|_1 = \int\limits_{\{f=1\}} (1-W(x,y_n))\,dx +
\int\limits_{\{f=0\}} W(x,y_n)\,dx \longrightarrow 0.
\]
Thus $W(.,y_n)\to f$ in $L_1$, which we wanted to prove.

\medskip

(c) Let $F=(U_1,U_2,E)$ be a bigraph such that $\tbp_\ind(F,W)=0$,
and let $U_i=[k_i]$. We show that the packing dimension of $J_1$ is
at most $10k_2$. To this end, we show that if any two elements of a
finite set $Z\subseteq J_1$ are at a distance at least $\eps$, then
$|Z|\le c(k) \eps^{-2k_2}$. Let $\HH=\{\supp(W(x,.)):~x\in Z\}$, then
\begin{equation}\label{EQ:TAVOL}
\pi_2(X\triangle Y)\ge \eps
\end{equation}
for any two distinct sets $X,Y\in\HH$.

Let $A$ be the union of all atoms of the set algebra generated by
$\HH$ that have measure $0$. Clearly $A$ itself has measure $0$, and
hence the family $\HH'=\{X\setminus A:~X\in\HH\}$ still has property
\eqref{EQ:TAVOL}.

We claim that $\HH'$ has VC-dimension less than $k_2$. Indeed,
suppose that $J_2\setminus A$ contains a shattered set $S$ with
$|S|=k_2$. To each $j\in U_2$, assign a point $q_j\in S$ bijectively.
To each $i\in U_1$, assign a point $p_i\in Z$ such that $q_j\in
\supp(W(p_i,.))$ if and only if $ij\in E$. This is possible since $S$
is shattered. Now fixing the $p_i$, for each $j$ there is a subset of
$J_2$ of positive measure whose points are contained in exactly the
same members of $\HH'$ as $q_j$, since $q_j\notin A$. This means that
the function $t=\tbp_{J_1,\ind}(F,W;.):~V_1^{J_1}\to \R$ satisfies
$t(p)>0$. Corollary \ref{COR:DERAND} implies that $t(x)>0$ for a
positive fraction of the choices of $x\in J_1^{V_1}$, and hence
$\tbp_\ind(F,W)>0$, a contradiction.

Applying Lemma \ref{LEM:VC} we conclude that $|Z|=|\HH|\le
(80k_2)^{10k_2} \eps^{-20k_2}$.
\end{proof*}

\subsection{Hereditary properties and thin bigraphons}

A graph property $\PP$ is a class of finite graphs closed under
isomorphism. The property is called {\it hereditary}, if whenever
$G\in\PP$, then every induced subgraph is also in $\PP$.

Let $\PP$ be any graph property. We denote by $\overline{\PP}$ its
{\it closure}, i.e., the class of graphons $(J,W)$ that arise as
limits of graph sequences in $\PP$. For every graphon $W$, let
$\II(W)$ denote the set of those graphs $F$ for which
$t_\ind(F,W)>0$. Clearly, $\II(W)$ is a hereditary graph property.

Let $\PP$ be a hereditary property of graphs. Then
\begin{equation}\label{EQ:PPII}
\cup_{W\in\overline{\PP}} \II(W) \subseteq\PP.
\end{equation}
Indeed, if $F\notin\PP$, then $t_\ind(F,G)=0$ for every $G\in\PP$,
since $\PP$ is hereditary. This implies that $t_\ind(F,W)=0$ for all
$W\in\overline{\PP}$, and so $F\notin \II(W)$.

Equality does not always hold in \eqref{EQ:PPII}. For example, we can
always add a bigraph $G$ and all its induced subgraphs to $\PP$
without changing $\overline{\PP}$. As a less trivial example,
consider all bigraphs with degrees bounded by 10. This property is
hereditary, and $\overline{\PP}$ consists of a single bigraphon (the
identically $0$ function).

\begin{prop}\label{PROP:NOFRILL}
For a hereditary property $\PP$ of graphs equality holds in
\eqref{EQ:PPII} if and only if for every graph $G\in\PP$ and $v\in
V(G)$, if we add a new node $v'$ and connect it to all neighbors of
$v$, then at least one of the two graphs obtained by joining or not
joining $v$ and $v'$ has property $\PP$.
\end{prop}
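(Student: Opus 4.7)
The plan is to prove the two directions separately. \emph{Necessity ($\Rightarrow$).} Assuming equality holds in \eqref{EQ:PPII}, given $G\in\PP$ and $v\in V(G)$ I will produce one of the two twin-extensions of $v$ in $\PP$. The natural candidate is the graph $G'$ obtained from $G$ by adding a vertex $v'$ with $N(v')=N(v)$ and \emph{no} edge $vv'$. Let $W_G$ be the step function graphon of $G$ on $[0,1]$; since $W_G$ is the limit of the constant sequence $G,G,\ldots$, we have $W_G\in\overline\PP$. Because graphs have no loops, $W_G(x,y)=0$ whenever $x,y$ lie in the same step $L_u$. Consequently, choosing $x_u\in L_u$ for every $u\in V(G)$ and $x_{v'}\in L_v$ yields a set of positive measure on which the integrand of $t_\ind(G',W_G)$ equals $1$, so $t_\ind(G',W_G)>0$. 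By the equality, $G'\in\II(W_G)\subseteq\PP$.

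\emph{Sufficiency ($\Leftarrow$).} Let $G\in\PP$ with $V(G)=\{v_1,\dots,v_k\}$. Starting from $G$ and iteratively applying the twin-extension hypothesis---at each step, pick a vertex of the current graph, add a twin of it, and choose whichever of the two resulting adjacencies keeps the graph in $\PP$---I produce for every $n\ge 1$ a graph $G_n\in\PP$ that is a blow-up of $G$: each original vertex $v_i$ is replaced by a cluster $C_i$ of $n$ copies, the bipartite graph between $C_i$ and $C_j$ is complete when $v_iv_j\in E(G)$ and empty otherwise, and the within-cluster structure is arbitrary. The key observation making this work is that both twin operations preserve cross-cluster adjacency, since the new vertex inherits exactly the outside-cluster neighborhood of its model vertex.

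To finish I would pass to the graphon limit. For the step function graphon $W_{G_n}$ of $G_n$, the event that uniform samples $x_1,\dots,x_k\in[0,1]$ land one per cluster (with $x_i$ in the interval corresponding to $C_i$) has probability $k^{-k}$, and on this event the integrand of $t_\ind(G,W_{G_n})$ is identically $1$, because cross-cluster edges of $G_n$ match $E(G)$. Hence $t_\ind(G,W_{G_n})\ge k^{-k}$ for every $n$. By the compactness of the graphon space, a subsequence of $(W_{G_n})$ converges to some $W\in\overline\PP$; continuity of $t_\ind(G,\cdot)$ under graphon convergence yields $t_\ind(G,W)\ge k^{-k}>0$, so $G\in\II(W)$ and the reverse inclusion follows. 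The only real subtlety is the invariance of cross-cluster adjacency under iterated twin extensions, which is immediate from the definition of twin; within-cluster structure then remains irrelevant for the density $t_\ind(G,W_{G_n})$.
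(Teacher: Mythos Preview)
Your sufficiency argument ($\Leftarrow$) is correct and essentially identical to the paper's: build blow-ups $G_n\in\PP$ via iterated twin extensions, observe $t_\ind(G,G_n)\ge k^{-k}$, and pass to a subsequential limit $W\in\overline\PP$.

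Your necessity argument ($\Rightarrow$), however, has a genuine gap. You claim $W_G\in\overline\PP$ because ``$W_G$ is the limit of the constant sequence $G,G,\dots$'', but $\overline\PP$ consists of limits of \emph{growing} graph sequences in $\PP$, and a constant finite graph is not such a sequence. In fact $W_G\in\overline\PP$ can fail: take $\PP=\{\text{complete graphs}\}$ and $G=K_2$; every growing sequence of cliques converges to the constant-$1$ graphon, so $\overline\PP=\{1\}$, whereas $W_{K_2}$ has edge density $1/2$. More tellingly, your argument never actually invokes the equality hypothesis---you only use the inclusion $\II(W_G)\subseteq\PP$ from \eqref{EQ:PPII}---so if it were valid it would prove that the \emph{non-edge} twin $G'$ lies in $\PP$ for \emph{every} hereditary $\PP$ containing $G$. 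The same clique example refutes this: the non-edge twin of a vertex of $K_2$ is $P_3\notin\PP$, while only the edge twin $K_3$ survives.

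The paper's route for this direction is different and does use the hypothesis: given $G\in\PP$, equality furnishes some $W\in\overline\PP$ with $t_\ind(G,W)>0$; one then argues directly inside $W$ that for a positive-measure set $X$ of choices of $x_v$ the integrand is positive, and since on $X\times X$ one cannot have $W$ identically $0$ \emph{and} identically $1$, at least one of $t_\ind(G',W)$, $t_\ind(G'',W)$ is positive, placing that twin in $\II(W)\subseteq\PP$. Note this yields only one of the two twins, which is exactly what the proposition asserts.
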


\begin{proof}
Suppose that this condition holds. Let $F\in\PP$ have $n$ nodes, and
let $F(k)$ denote a graph in $\PP$ obtained from $F$ by a repetition
of this operation so that each original node has $k$ copies. Then
$t_\ind(F,F(k))\ge 1/n^n$. Let $W$ be the limit graphon of some
subsequence of the $F(k)$ ($k\to\infty$), then $W\in \overline{\PP}$.
Furthermore, clearly $t_\ind(F,W)>0$, and so $F\in\II(W)$.

Conversely, assume that $F=(V,E)\in\II(W)$ for some
$W\in\overline{\PP}$, so that $t_\ind(F,W)>0$. Let $F'$ and $F''$ be
the two graphs obtained from $F$ by doubling a node $v$ ($vv'\notin
E(F')$, but $vv'\in E(F'')$), then
\[
\int\limits_{J^V} t_\ind(F,W;x)\,dx>0
\]
implies that there is a positive measure of choices for the values of
$x_u$ $(u\in V(F)\setminus v)$, for which the set $X$ of the choices
of $x_v$ with $t_\ind(F,W;x)>0$ has positive measure. Clearly either
$W(x,y)<1$ for a positive measure of choices of $(x,y)\in Y$ or this
holds for $W(x,y)>0$. One or the alternative, say the first one,
holds for a positive measure of choices for the values of $x_u$
$(u\in V(F)\setminus v)$. But then $t(F',W)>0$.
\end{proof}

All of the above notions and simple facts extend to bigraphs and
bigraphons trivially.

Let us turn to thin graphons and bigraphons. The significance of thin
bigraphons is supported by the following observation:

\begin{prop}\label{PROP:HERED-THIN}
Let $\PP$ be a hereditary bigraph property that does not contain all
bigraphs. Then every bigraphon in its closure is thin.
\end{prop}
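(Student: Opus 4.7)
The plan is to deduce the proposition directly from the bigraph version of the containment \eqref{EQ:PPII}, which the paragraph immediately before the proposition records (and which the paper notes extends verbatim from graphs to bigraphs). That containment says $\bigcup_{W\in\overline{\PP}}\II(W)\subseteq\PP$, where $\II(W)=\{F:\tbp_\ind(F,W)>0\}$. Granting it, the proof collapses to a single line: by hypothesis there is some bigraph $F$ outside $\PP$, and then $F\notin\II(W)$ for every $W\in\overline{\PP}$, i.e.\ $\tbp_\ind(F,W)=0$, which is exactly the definition of thinness with $F$ as witness.

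Accordingly, the only genuine step is to verify \eqref{EQ:PPII} in the bigraph setting, and the argument is precisely the one sketched before the proposition for graphs. For a finite bigraph $G\in\PP$, heredity forbids $F$ from appearing as an injective induced sub-bigraph of $G$, so the injective contribution to $\tbp_\ind(F,G)$ vanishes, while the non-injective contributions are of order $|V(G)|^{-1}$. Since $\tbp_\ind(F,\cdot)$ is a polynomial combination of the ordinary homomorphism densities $\tbp(F',\cdot)$, which are continuous under graph convergence, we get $\tbp_\ind(F,W)=0$ in the limit for any convergent sequence $G_n\to W$ with $|V(G_n)|\to\infty$.

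The only mild obstacle is the degenerate case where $W\in\overline{\PP}$ is itself a finite bigraph, reached by a bounded-size sequence $G_n\in\PP$; there the non-injective contributions need not die in the limit. One handles this by replacing the original witness with a twin-free $F^*\notin\PP$ whose bipartition classes both have more vertices than those of $W$. Such $F^*$ exist at arbitrary size because every bigraph embeds as an induced sub-bigraph of a twin-free one, and so by heredity $\PP$ must be missing arbitrarily large twin-free bigraphs (otherwise $\PP$ would contain all bigraphs). Twin-freeness plus pigeonhole then force every term in the sum defining $\tbp_\ind(F^*,W)$ to vanish, completing the proof.
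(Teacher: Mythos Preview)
Your approach is correct and coincides with the paper's. The paper does not write out a proof of this proposition; it states it as an ``observation'' immediately after noting that \eqref{EQ:PPII} and the surrounding facts extend to bigraphs, so the intended argument is exactly your one-liner: pick any $F\notin\PP$, then $F\notin\II(W)$ for every $W\in\overline{\PP}$ by \eqref{EQ:PPII}, hence $\tbp_\ind(F,W)=0$.

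Your second and third paragraphs go further than the paper does, carefully justifying \eqref{EQ:PPII} in the bigraph setting and worrying about non-injective maps. This extra care is not wasted: the paper's terse justification of \eqref{EQ:PPII} (``if $F\notin\PP$, then $t_\ind(F,G)=0$ for every $G\in\PP$'') is literally false when $F$ has twins, since non-injective maps can contribute positively. Your remedy---either let the $O(|V(G_n)|^{-1})$ non-injective contribution die in the limit, or pass to a twin-free $F^*\notin\PP$---is exactly the right fix. The twin-free replacement is the cleaner route, since then $\tbp_\ind(F^*,G)=0$ holds on the nose for every $G\in\PP$, making the degenerate bounded-size case moot as well. (Alternatively, finite bigraphons are stepfunctions with finitely many distinct rows, hence automatically thin by Lemma~\ref{LEM:THIN}, so the degenerate case could be dispatched that way too.)
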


Proposition \ref{PROP:HERED-THIN} and Theorem \ref{THM:FINDIM} imply:

\begin{corollary}\label{COR:HER-PROP}
Let $\PP$ be a hereditary bigraph property that does not contain all
bigraphs. Then for every pure bigraphon $(J_1,J_2,,W)$ in its
closure, $W$ is $0$-$1$ valued almost everywhere, and $J_1$ and $J_2$
are compact and their dimension is bounded by a finite number
depending on $\PP$ only.
\end{corollary}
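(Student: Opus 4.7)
The plan is to combine Proposition \ref{PROP:HERED-THIN} with Theorem \ref{THM:FINDIM} and Remark \ref{REM:FINDIM}. The only extra observation needed is that one can choose a \emph{single} bigraph $F_0\notin\PP$ which serves as a thinness witness for every $W\in\overline{\PP}$ simultaneously; this uniformity is what allows the dimension bound to depend only on $\PP$.

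First I would fix an arbitrary bigraph $F_0\notin\PP$, which exists because $\PP$ does not contain all bigraphs. Since $\PP$ is hereditary, no $G\in\PP$ contains $F_0$ as an induced sub-bigraph, and therefore $\tbp_\ind(F_0,G)=0$ for every $G\in\PP$. The induced density $\tbp_\ind(F_0,\cdot)$ is continuous under graphon convergence, so this vanishing passes to the closure, giving $\tbp_\ind(F_0,W)=0$ for every $W\in\overline{\PP}$. This is exactly Proposition \ref{PROP:HERED-THIN} applied to this specific $F_0$.

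Now fix any pure $(J_1,J_2,W)\in\overline{\PP}$. Applying Theorem \ref{THM:FINDIM} with witness $F_0$ immediately yields (a) $W\in\{0,1\}$ almost everywhere and (b) compactness of $J_1$ and $J_2$. For the uniform dimension bound, Remark \ref{REM:FINDIM} gives the quantitative form $\dim J_i \le 10|F_0|$, a quantity depending only on $F_0$, hence only on $\PP$. I expect the only real content to lie in the passage from ``$\tbp_\ind(F_0,G)=0$ on $\PP$'' to ``$\tbp_\ind(F_0,W)=0$ on $\overline{\PP}$'' --- i.e., in Proposition \ref{PROP:HERED-THIN} itself --- which is what one would have to reprove if it were not already available; everything else in this corollary is a direct citation of the preceding results.
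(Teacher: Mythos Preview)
Your proposal is correct and follows exactly the route the paper takes: the corollary is stated immediately after Proposition~\ref{PROP:HERED-THIN} and Theorem~\ref{THM:FINDIM} with the remark that those two results imply it. Your explicit observation that a single excluded bigraph $F_0\notin\PP$ serves as a thinness witness uniformly over $\overline{\PP}$, together with the quantitative bound from Remark~\ref{REM:FINDIM}, is precisely what makes the dimension bound depend only on $\PP$; the paper leaves this implicit.
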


By this corollary, we can define, for every nontrivial hereditary
property of bigraphs, a finite dimension. It would be interesting to
find further combinatorial properties of this dimension.

The natural analogue of this corollary for graph properties fails to
hold.

\begin{example}\label{EXA:TRI-FREE}
Let $\PP$ be the property of a graph that it is triangle-free. Then
every bipartite graphon is in its closure, but such graphons need not
be $0$-$1$ valued, and their topology need not be finite dimensional
or compact.
\end{example}

However, if we include the (seemingly) simplest of the conclusions of
Corollary \ref{COR:HER-PROP} as a hypothesis, then we can extend it
to all graphs. A graph property $\PP$ is {\it random-free}, if every
$W\in\overline{\PP}$ is $0$-$1$ valued almost everywhere.

\begin{theorem}\label{THM:RF}
Let $\PP$ be a hereditary random-free graph property. Then for every
pure graphon $(J,W)$ in its closure, $J$ is compact and finite
dimensional.
\end{theorem}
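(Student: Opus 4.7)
Proof Proposal:

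My plan is to reduce Theorem~\ref{THM:RF} to Theorem~\ref{THM:FINDIM} applied to $W$ viewed as the bigraphon $(J,J,W)$. Since random-freeness already gives $W\in\{0,1\}$ almost everywhere, by Theorem~\ref{THM:FINDIM} it suffices to exhibit a single bigraph $F$ with $\tbp_\ind(F,W)=0$; the compactness and finite packing dimension of $J$ then follow at once.

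Suppose for contradiction that no such $F$ exists, i.e., $\tbp_\ind(F,W)>0$ for every bigraph $F$. Writing $W$ as a cut-distance limit of graphs $G_n\in\PP$ and using continuity of $\tbp_\ind(F,\cdot)$, this yields: for every bigraph $F$ there is a graph $G\in\PP$ with $\tbp_\ind(F,G)>0$. I will derive a contradiction by producing a non-$0$-$1$ graphon in $\overline{\PP}$.

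For each $n$, choose a balanced bipartite graph $F_n$ on two vertex classes of size $n$ whose bigraphon converges in cut distance to the constant-$\tfrac12$ bigraphon; a typical Erd\H{o}s--R\'enyi bipartite random graph with edge probability $\tfrac12$ works and, being generic, has pairwise distinct row- and column-neighborhoods with no row equal to any column. By the previous paragraph some $G_n\in\PP$ has $\tbp_\ind(F_n,G_n)>0$, and these genericity properties force the witnessing tuples of points in $V(G_n)$ to be pairwise distinct and to split into two disjoint subsets $V_1^n,V_2^n\subseteq V(G_n)$ on which $F_n$ is realized as an induced sub-bigraph. Hereditariness then gives $H_n=G_n[V_1^n\cup V_2^n]\in\PP$, a $2n$-vertex graph whose bipartite part between $V_1^n$ and $V_2^n$ is exactly $F_n$.

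To extract a non-$0$-$1$ graphon limit I form the $2$-coloured graphs $(H_n,c_n)$, where $c_n$ records the bipartition $V_1^n\sqcup V_2^n$, and pass to a subsequence converging in the coloured-graphon cut metric to some $(W^*,c^*)$; coloured compactness is a routine decoration of the standard uncoloured proof. Forgetting the colouring, $W^*\in\overline{\PP}$, while the restriction of $W^*$ to the cross-colour block $\{(x,y):c^*(x)\neq c^*(y)\}$ (of total Lebesgue measure $\tfrac12$) equals the bigraphon limit of the $F_n$, namely the constant-$\tfrac12$ bigraphon. So $W^*$ takes the value $\tfrac12$ on a set of positive measure, contradicting the random-freeness of $\PP$. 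The main obstacle is precisely this final step: the coloured-graphon compactness together with continuity of the cross-colour restriction. This is standard in the graph-limit literature but is the only ingredient that goes beyond what the paper itself has developed; everything else is a direct translation of hypotheses followed by an appeal to Theorem~\ref{THM:FINDIM}.
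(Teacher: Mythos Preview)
Your strategy---show that the given $W$ is thin as a bigraphon and then apply Theorem~\ref{THM:FINDIM}---is exactly the paper's; the paper simply packages the thinness step as Lemma~\ref{LEM:RF} (the implication (i)$\Rightarrow$(ii) there). However, there is a genuine gap in your extraction of the $H_n$. The genericity clause ``no row equal to any column'' does \emph{not} force $\{x_i\}\cap\{y_j\}=\emptyset$: distinct rows force the $x_i$ to be pairwise distinct and distinct columns force the $y_j$ to be pairwise distinct, but if $x_i=y_j$ then row $i$ of $F_n$ is read off from adjacencies of $x_i$ to $y_1,\dots,y_n$, while column $j$ is read off from adjacencies of $y_j$ to $x_1,\dots,x_n$, and there is no identification of the two vertex lists that lets you compare them. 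The fix is easy and does not need genericity at all: since $W=\lim G_m$ with $G_m\in\PP$ and (without loss) $|V(G_m)|\to\infty$, and since the non-injective maps contribute at most $\binom{2n}{2}/|V(G_m)|$ to $\tbp_\ind(F_n,G_m)$, choosing $m$ large enough (depending on $n$ and on $\tbp_\ind(F_n,W)>0$) produces an injective witness, hence disjoint $V_1^n,V_2^n$ of size $n$ each.

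Your coloured-graphon compactness step is correct but avoidable. The paper's proof of Lemma~\ref{LEM:RF}, (i)$\Rightarrow$(iii), handles exactly this situation without colours: because the $F_n$ are quasirandom, the bigraphons $W_{F_n}$ converge to the constant $\tfrac12$ in cut \emph{norm} under any vertex ordering; one then passes to a subsequence and orders the two side-graphs $H_n[V_1^n]$ and $H_n[V_2^n]$ so that each converges in cut norm (this is Lemma~4.16 of \cite{BCLSV1}); concatenating the two orderings, $W_{H_n}$ converges in cut norm to an explicit graphon on $[0,1]$ with a constant-$\tfrac12$ off-diagonal block. That graphon lies in $\overline{\PP}$ and is not $0$-$1$ valued, giving the contradiction while staying entirely within tools already cited in the paper.
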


Before proving this theorem, we need some preparation.

\begin{lemma}\label{LEM:RF}
For a hereditary graph property $\PP$, the following are equivalent:

\smallskip

{\rm(i)} $\PP$ is random-free;

\smallskip

{\rm(ii)} there is a bigraph $F$ such that $\tbp(F,W)=0$ for all
$W\in\overline{\PP}$;

\smallskip

{\rm(iii)} there is a bipartite graph $F$ with bipartition
$(U_1,U_2)$ such that no graph obtained from $F$ by adding edges
within $U_1$ and $U_2$ has property $\PP$.
\end{lemma}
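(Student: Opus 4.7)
The plan is to decompose the equivalences into (ii)\,$\Leftrightarrow$\,(iii) and (i)\,$\Leftrightarrow$\,(iii), using the hereditary bigraph property
\[
\PP^{\mathrm{bi}}=\{(A,B,E(G[A,B])):\ G\in\PP,\ A,B\subseteq V(G)\}.
\]
By hereditarity of $\PP$, condition (iii) is equivalent to the bigraph $F$ not belonging to $\PP^{\mathrm{bi}}$, which is the translation that lets the bigraph machinery of Section \ref{THIN} apply.

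For (iii)\,$\Rightarrow$\,(i), Corollary \ref{COR:HER-PROP} applied to $\PP^{\mathrm{bi}}$ gives that every pure bigraphon in $\overline{\PP^{\mathrm{bi}}}$ is $0$-$1$ valued almost everywhere. Any graphon $W\in\overline{\PP}$, viewed as the symmetric bigraphon $(J,J,W)$, is the cut-norm limit of the symmetric adjacency bigraphs of approximating graphs $G_n\in\PP$, each of which lies in $\PP^{\mathrm{bi}}$; hence $W$ is in $\overline{\PP^{\mathrm{bi}}}$ and is $0$-$1$ a.e., establishing (i). For the converse (i)\,$\Rightarrow$\,(iii) I argue contrapositively: if (iii) fails then every bipartite graph $F$ has an in-$\PP$ extension within its parts, so taking $F_n$ to be a quasi-random bipartite graph on $[n]\cup[n]$ of density $1/2$ and $G_n\in\PP$ its extension, a cut-norm subsequential limit $W\in\overline{\PP}$ has bipartite part equal to the constant $1/2$ on a positive-measure cross block, contradicting (i).

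For (ii)\,$\Rightarrow$\,(iii), the canonical embedding $(U_1,U_2)\hookrightarrow V(G)$ into any $F$-extension $G\in\PP$ gives $\tbp(F,G)\ge|V(G)|^{-|V(F)|}>0$, contradicting (ii). The converse (iii)\,$\Rightarrow$\,(ii) is the main obstacle: while $F\notin\PP^{\mathrm{bi}}$ immediately yields the vanishing of the induced density $\tbp_\ind(F,W)$ on $\overline{\PP}$, (ii) requires the vanishing of the non-induced density. Combining (iii)\,$\Rightarrow$\,(i) (so every $W\in\overline{\PP}$ is $0$-$1$ a.e.) with the bipartite induced Ramsey theorem, I choose a bigraph $F^\ast$ large enough that any $0$-$1$ bigraphon containing $F^\ast$ as a sub-bigraph must contain $F$ as an induced sub-bigraph; this yields $\tbp(F^\ast,W)=0$ uniformly on $\overline{\PP}$ as required.
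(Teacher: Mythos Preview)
Your (ii)$\Rightarrow$(iii) has a small gap: from an $F$-extension $G\in\PP$ you obtain $\tbp(F,G)>0$, but (ii) speaks of graphons $W\in\overline\PP$, and a single finite $G\in\PP$ need not represent a point of $\overline\PP$ (for the hereditary property ``maximum degree at most $10$'', for instance, the closure consists only of the zero graphon). This is easily repaired via your own (i)$\Leftrightarrow$(iii) together with the paper's route (ii)$\Rightarrow$(i), which is immediate from Theorem~\ref{THM:FINDIM}(a): a graphon that is not $0$-$1$ a.e.\ is not thin, so $\tbp(F,W)\ge\tbp_\ind(F,W)>0$ for every bigraph $F$.

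The real problem is your (iii)$\Rightarrow$(ii). The Ramsey statement you invoke---that one can choose $F^\ast$ so that every $0$-$1$ bigraphon containing $F^\ast$ as a sub-bigraph contains $F$ as an \emph{induced} sub-bigraph---is false whenever $F$ has a non-edge: the constant bigraphon $W\equiv 1$ has $\tbp(F^\ast,1)=1$ for every $F^\ast$, yet $\tbp_\ind(F,1)=0$. In fact this same example (take $\PP$ to be the class of complete graphs, so $\overline\PP=\{1\}$) shows that (ii), read literally with $\tbp$, is not implied by (i) and (iii); the paper's own argument for (iii)$\Rightarrow$(ii) shares this defect, asserting $t_\ind(F',W)>0$ without controlling $W$ on the cross non-edges of $F$. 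The intended reading of (ii) is with $\tbp_\ind$ in place of $\tbp$ (which is all that is used downstream in Theorem~\ref{THM:RF}); under that reading (iii)$\Rightarrow$(ii) is immediate from your observation $F\notin\PP^{\mathrm{bi}}$ together with the bigraph analogue of \eqref{EQ:PPII}, and no Ramsey step is needed.
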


\begin{proof}
(i)$\Rightarrow$(iii): Assume that (iii) does  not hold, then for
every bigraph $F$ there is a graph $\hat F\in\PP$ and a partition
$V(\hat F)=\{U_1(\hat F),U_2(\hat F)\}$ such that the bigraph between
$U_1(\hat F)$ and $U_2(\hat F)$ is isomorphic to $F$. We want to show
that $\PP$ is not random-free.

\smallskip

Let $(F_1,F_2,\dots)$ be a quasirandom sequence of bigraphs with edge
density $1/2$, with the same number of nodes in each bipartition
class. Consider the graphs $\hat F_n$, and let $F_n'$ and $F_n''$
denote the subgraphs of $\hat F_n$ induced by $U_1(\hat F_n)$ and
$U_2(\hat F_n)$, respectively. By selecting a subsequence we may
assume that the graph sequences $(F'_1,F'_2,\dots)$
$(F''_1,F''_2,\dots)$ are convergent. By Lemma 4.16 in \cite{BCLSV1},
we can order the nodes of $F'_n$ so that $W_{F_n'}$ converges to a
graphon $([0,1],W')$ in the cut norm $\|.\|_\square$, and similarly,
$W_{F_n''}$ converges to a graphon $([0,1],W'')$ in the cut norm. We
order the nodes of $\hat F_n$ so that the nodes in $F_n'$ preceed the
nodes of $F_n''$, and keep the above ordering otherwise. Then
trivially $W_{\hat F_n}$ converges to the graphon
\[
U(x,y)=
  \begin{cases}
    W'(2x,2y) & \text{if $x,y< 1/2$}, \\
    W''(2x-1,2y-1) & \text{if $x,y> 1/2$}, \\
    1/2 & \text{otherwise}.
  \end{cases}
\]
So $U\in\overline{\PP}$ is not $0$-$1$ valued, and so $\PP$ is not
random-free.

\smallskip

(ii)$\Rightarrow$(i): Suppose that $\PP$ is not random-free, and let
$(J,W)\in\overline{\PP}$ be a graphon that is not $0$-$1$ valued
almost everywhere. Then by Theorem \ref{THM:FINDIM}, it is not thin
as a bigraphon, which means that for every bigraph $F=(U_1,U_2,E)$,
$\tbp_\ind(F,W)>0$, so (ii) is not satisfied.

\smallskip

(iii)$\Rightarrow$(ii): Consider a bigraph $F=(U_1,U_2,E)$ as in
(iii), and consider it as a bipartite graph on $V=U_1\cup U_2$ (we
assume that $U_1\cap U_2=\emptyset$). Suppose that it does not
satisfy (ii), then there is a graphon $W\in \overline{\PP}$ such that
$t(F,W;x)>0$ for a positive measure of choices of the $x\in J^V$. For
every such choice, we define a graph $F'$ by connecting those pairs
$\{i,j\}$ of nodes of $F$ for which $W(x_i,x_j)>0$ and either $i,j\in
U_1$ or $i,j\in U_2$. The same supergraph $F'$ will occur for a
positive measure of choices of the $x_i$, and for this $F'$ we have
$t_\ind(F',W)>0$, so using \eqref{EQ:PPII}, we get $F'\in
\II(W)\subseteq\PP$, a contradiction.
\end{proof}

\begin{proof*}{Theorem \ref{THM:RF}}
By Lemma \ref{LEM:RF}, there is a bigraph $F$ such that $\tbp(F,W)=0$
for all $W\in\overline{\PP}$. Thus Theorem \ref{THM:FINDIM} implies
the assertion.
\end{proof*}

\section{Regularity partitions}\label{REGPART}

\subsection{Weak and strong regularity partitions}

The Regularity Lemma of Szemer\'edi \cite{Szem1,Szem2}, and various
weaker and stronger versions of it are basic tools in the study of
large graphs and graphons \cite{LSz3}. Our goal is to show that it is
also closely related to the topology of graphons.

Let $(J,W)$ be a graphon and $\PP$, a partition of $J$ into
measurable sets with positive measure. For $x\in J$, let $S(x)$
denote the partition class containing $x$. Define
\[
f_\PP(x)=\frac1{\pi(S(x))}\int\limits_{S(x)} f(x)\,dx
\]
for a function $f\in L_1(J)$, and
\[
W_\PP(x,y)=\frac1{\pi(S(x))\pi(S(y))}\int\limits_{S(x)\times S(y)}
W(x,y)\,dx.
\]
We say that $\PP$ is a {\it weak regularity partition} with error
$\eps$, if $\|W-W_\PP\|_\square\le\eps$.

We define a {\it Szemer\'edi partition} of a graphon with error
$\eps$ as a partition $\PP=\{S_1\cup\dots\cup S_k\}$ of $J$ into
measurable sets such that
\begin{equation}\label{EQ:SZ-PART}
|\langle W-W_\PP, H\rangle|\le\eps
\end{equation}
for every function $H:~J\times J\to[0,1]$ that is $0$-$1$ valued and
whose support is the union of product sets $R_{ij}=R'_{ij}\times
R_{ij}'\subseteq S_i\times S_j$ ($i,j\in[k]$). To relate this to the
weak partitions, we note that $\|W-W_\PP\|_\square\le\eps$ can be
expressed as \eqref{EQ:SZ-PART} for all functions $h$ of the form
$\one_{S\times T}$. (The formulation above is not a direct
generalization of Szemer\'edi's definition, but it is closest in our
setting; cf. \cite{LSz3}.)

A {\it strong regularity partition} of a graph was introduced by
Alon, Fischer, Krivelevich and M.~Szegedy \cite{AFKS}. Here the error
is specified by an infinite sequence
$\mathbf\EE=(\eps_0,\eps_1,\dots)$ of positive numbers. Again
recasting it in our setting, $\PP$ is a strong regularity partition
with error $\mathbf\EE$ of a graphon $(J,W)$ if there is a graphon
$(J,U)$ such that
\[
\|W-U\|_1\le\eps_0 \qquad\text{and}\qquad
\|U-W_\PP\|_\square\le\eps_{|\PP|}.
\]

Even stronger would be, of course, to require that $\|W-W_\PP\|_1\le
\eps$  (equivalently, \eqref{EQ:SZ-PART} holds for all measurable
functions $H:~J\times J\to[-1,1]$). In this case we call $\PP$ an
{\it ultra-strong regularity partition} with error $\eps$.

The following result is a graphon version of the original
Szemer\'edi's Regularity Lemma \cite{Szem1,Szem2}, its ``weak'' form
due to Frieze and Kannan \cite{FK}, and its strong form due to Alon,
Fischer, Krivelevich and M.~Szegedy \cite{AFKS}. It was proved for
graphons in \cite{LSz3}.

\begin{theorem}\label{THM:FRIEZE-KANNAN}
Let $(J,W)$ be a graphon on an atomfree probability space. Then

\smallskip

{\rm(a)} for every $\eps>0$ $(J,W)$ has a Szemer\'edi partition with
error $\eps$ into no more than $T(\eps)$ classes, where $T(\eps)$
depends only on $\eps$;

\smallskip

{\rm(b)}  for every $\eps>0$ $(J,W)$ has a weak regularity partition
with error $\eps$ into no more than $2^{2/\eps^2}$ classes.

\smallskip

{\rm(c)} for every sequence $\mathbf{\EE}=(\eps_0,\eps_1,\dots)$ of
positive numbers, $(J,W)$ has a strong regularity partition of
$(J,W)$ with error $\mathbf{\EE}$ into no more than $T({\mathbf\EE})$
classes, where $T({\mathbf\EE})$ depends only on ${\mathbf\EE}$.
\end{theorem}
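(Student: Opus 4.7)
The plan is a single mean-square-density increment scheme: I prove (b) by the Frieze--Kannan argument, upgrade to (a) by a richer refinement step, and deduce (c) by iterating (a). Throughout, set $\E(\PP)=\|W_\PP\|_2^2\in[0,1]$; if $\QQ$ refines $\PP$, then $W_\PP$ is the $L_2$-projection of $W_\QQ$ onto $\PP$-stepfunctions, so Pythagoras gives the increment formula $\E(\QQ)-\E(\PP)=\|W_\QQ-W_\PP\|_2^2\ge 0$.

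For (b), I would start from $\PP_0=\{J\}$. While $\PP_k$ fails weak regularity with error $\eps$, choose $S,T\subseteq J$ with $|\langle W-W_{\PP_k},\one_{S\times T}\rangle|>\eps$ and let $\PP_{k+1}$ be the common refinement of $\PP_k$ with $\{S,J\setminus S\}$ and $\{T,J\setminus T\}$; thus $|\PP_{k+1}|\le 4|\PP_k|$. Since $\one_{S\times T}$ is a $\PP_{k+1}$-stepfunction, orthogonality reduces the inner product to $\langle W_{\PP_{k+1}}-W_{\PP_k},\one_{S\times T}\rangle$, and Cauchy--Schwarz yields $\|W_{\PP_{k+1}}-W_{\PP_k}\|_2\ge\eps$, i.e., $\E(\PP_{k+1})-\E(\PP_k)\ge\eps^2$. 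Since $\E\le 1$ the iteration halts in $\le 1/\eps^2$ steps, producing $|\PP|\le 4^{1/\eps^2}=2^{2/\eps^2}$.

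For (a) the scheme is identical, but a violating $H$ supported on $\bigcup_{i,j}R'_{ij}\times R''_{ij}$ forces me to refine each cell $S_i$ of $\PP_k$ by every $R'_{ij}$ and every $R''_{ji}$ for $j\in[|\PP_k|]$, giving $|\PP_{k+1}|\le|\PP_k|\cdot 2^{2|\PP_k|}$. The orthogonality trick still gives energy gain $\ge\eps^2$ because $H$ is a $\PP_{k+1}$-stepfunction with $\|H\|_2\le 1$. With $\le 1/\eps^2$ iterations and this doubly-exponential refinement rule, $|\PP|$ is bounded by a tower function $T(\eps)$ of height $O(1/\eps^2)$.

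For (c), I iterate (a). Build a chain $\PP_1\preceq\PP_2\preceq\cdots\preceq\PP_K$ with $K=\lceil 1/\eps_0^2\rceil+1$, where $\PP_{j+1}$ is obtained by running the (a)-iteration starting from $\PP_j$ with error $\eps_{|\PP_j|}$, producing a Szemer\'edi refinement of $\PP_j$. Pigeonhole on $0\le\E(\PP_1)\le\cdots\le\E(\PP_K)\le 1$ yields an index $k$ with $\E(\PP_{k+1})-\E(\PP_k)\le\eps_0^2$, so $\|W_{\PP_{k+1}}-W_{\PP_k}\|_1\le\|W_{\PP_{k+1}}-W_{\PP_k}\|_2\le\eps_0$. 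Take $\PP=\PP_k$, set $V=W-W_{\PP_{k+1}}+W_{\PP_k}$, and let $U$ be its pointwise truncation to $[0,1]$. Before truncation, $\|W-V\|_1=\|W_{\PP_{k+1}}-W_{\PP_k}\|_1\le\eps_0$ and $\|V-W_\PP\|_\square=\|W-W_{\PP_{k+1}}\|_\square\le\eps_{|\PP_k|}$; the pointwise bound $|V-U|\le|V-W|$ combined with $\|\cdot\|_\square\le\|\cdot\|_1$ absorbs the truncation overhead into a fixed constant factor on the sequence $\mathbf\EE$. The main obstacle will be this last parameter-bookkeeping---tightening constants so that the truncation slack fits inside the prescribed sequence---and verifying that initializing the (a)-iteration at a nontrivial starting partition $\PP_j$ still terminates with a common refinement of $\PP_j$ whose size is bounded by a new tower in $|\PP_j|$ and $\eps_{|\PP_j|}$.
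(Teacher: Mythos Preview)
The paper does not actually prove Theorem~\ref{THM:FRIEZE-KANNAN}; immediately before the statement it says the result ``was proved for graphons in \cite{LSz3}'' and gives no argument of its own. So there is nothing in the paper to compare your proof to, and your write-up would effectively be supplying the omitted proof. Your arguments for (a) and (b) are the standard energy-increment proofs (Szemer\'edi and Frieze--Kannan respectively) and are correct as stated, including the arithmetic giving the bound $2^{2/\eps^2}$ in (b) and the tower bound in (a); the orthogonality/Cauchy--Schwarz step and the refinement-size bookkeeping are both fine.

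Part (c), however, has a genuine gap that is more than ``tightening constants''. With your $V=W-W_{\PP_{k+1}}+W_{\PP_k}$ and $U$ its truncation to $[0,1]$, your pointwise bound $|V-U|\le|V-W|$ indeed gives $\|V-U\|_1\le\|W_{\PP_{k+1}}-W_{\PP_k}\|_1\le\eps_0$, so that
\[
\|W-U\|_1\le 2\eps_0\qquad\text{and}\qquad \|U-W_{\PP}\|_\square\le \eps_{|\PP|}+\eps_0.
\]
The $\eps_0$-slack in the second inequality cannot be absorbed by rescaling the sequence $\mathbf{\EE}$: in the regime one actually cares about, $\eps_{|\PP|}$ is far smaller than $\eps_0$ (think $\eps_0=10^{-2}$ and $\eps_j=1/j!$), and then no choice of a modified sequence $\mathbf{\EE}'$ with positive entries will make $\eps'_0+\eps'_{|\PP|}\le \eps_{|\PP|}$. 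So the truncation trick is not enough to produce a graphon $U$ meeting the paper's definition.

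The standard repair does not go through $V$ at all. One uses the fact that $\PP_{k+1}$ is a Szemer\'edi (not merely weak) partition: for each pair $(P,P')\in\PP_k^2$ declare it \emph{bad} if the restriction of $W$ to $P\times P'$ is irregular at the relevant scale, and set $U=W$ on good pairs and $U=W_{\PP_k}$ on bad pairs. Then $U$ is automatically $[0,1]$-valued, $\|W-U\|_1$ is controlled by the total measure of bad pairs (which the energy-gap bound makes $\le\eps_0$), and $\|U-W_{\PP_k}\|_\square$ is controlled directly by the Szemer\'edi regularity of $\PP_{k+1}$ applied to test functions that live on unions of good sub-rectangles --- exactly the class of $H$ your part (a) handles. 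This is also why iterating (a), rather than (b), is the right move in (c). Your identification of the second obstacle (initializing the (a)-iteration at $\PP_j$ and bounding the resulting refinement) is routine and works just as you say.
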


\begin{remark}\label{REM:BALANCED}
(i) We note that every graphon has an ultra-strong partition with
error $\eps$ by standard results in analysis, but the number of
classes cannot be bounded uniformly by any function of $\eps$.

\smallskip

(ii) In the usual formulation, partitions in the Regularity Lemma are
equitable, i.e., the partition classes are as equal as possible. For
graphons on atomless probability spaces, the classes can be required
to have the same measure. In fact, it is easy to see that the
partitions constructed e.g. in Corollary \ref{COR:REG-PART} and
Theorem \ref{THM:SZEMER} below can be repartitioned so that the
classes will be as equal as possible, the error is at most doubled,
and the number of classes is increased by a factor of at most $\lceil
1/\eps\rceil$.
\end{remark}

Several other analytic aspects and versions of the Regularity Lemma
were proved in \cite{LSz3}. One of these results made a connection
between regularity partitions and partitions of $J$ into sets with
small diameter in the $r_{W\circ W}$ metric. Here we prove a
stronger, cleaner version of that result, and then show how to
combine it with our results on thin graphons to get better bounds on
the number of partition classes in weak regularity partitions of this
graphons.

\subsection{Voronoi cells and regularity partitions}

We show that Voronoi cells in the metric spaces $(J,R_W)$ and
$(J,R_{W\circ W})$ are intimately related to different versions of
the Regularity Lemma.

Let $(J,d)$ be a metric space and let $\pi$ be a probability measure
on its Borel sets. We say that a set $S\subseteq J$ is an {\it
average $\eps$-net}, if $\int_J d(x,S)\,d\pi(x) \le\eps$.

Let $S\subseteq J$ be a finite set and $s\in S$. The {\it Voronoi
cell} of $S$ with center $s$ is the set of all points $x\in J$ for
which $d(x,s)\le d(x,y)$ for all $y\in S$. Clearly, the Voronoi cells
of $S$ cover $J$. (We can break ties arbitrarily to get a partition.)

\begin{theorem}\label{THM:VORONOI}
Let $(J,W)$ be a graphon, and let $\eps>0$.

\smallskip

{\rm (a)} Let $S$ be an average $\eps$-net in the metric space
$(S,r_{W\circ W})$. Then the Voronoi cells of $S$ form a weak
regularity partition with error at most $8\sqrt{\eps}$.

\smallskip

{\rm (b)} Let $\PP=\{J_1,\dots,J_k\}$ be a weak regularity partition
with error $\eps$. Then there are points $v_i\in J_i$ such that the
set $S=\{v_1,\dots,v_k\}$ is an average $(4\eps)$-net in the metric
space $(S,r_{W\circ W})$.
\end{theorem}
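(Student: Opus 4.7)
For part (a): The plan hinges on one orthogonality. If \(Q(x,y)\) is a stepfunction in the first variable on \(\PP\) (that is, \(Q(x,y)=q_{S(x)}(y)\)) and \(\phi\) is zero-mean on every class, then \(\int Q(x,y)\phi(x)\phi(y)\,dx\,dy=\sum_s(\int_{V_s}\phi)\int q_s(y)\phi(y)\,dy=0\); the analogue holds when \(Q\) is a stepfunction in the second coordinate. With \(\PP\) the Voronoi partition of \(S\) and \(c(x)\) the Voronoi center of \(x\), fix \(A,B\subseteq J\) and set \(f=\one_A,\,g=\one_B,\,\phi=f-f_\PP,\,\psi=g-g_\PP\). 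Since \(f_\PP\otimes g_\PP\) is a stepfunction on \(\PP\times\PP\) and \(W_\PP=\E[W\mid\PP\times\PP]\), one computes
\(
\int_{A\times B}(W-W_\PP)=\int W(\phi\otimes g+f_\PP\otimes\psi),
\)
which Cauchy--Schwarz bounds by \(\|W\phi\|_2\|g\|_2+\|f_\PP\|_2\|W\psi\|_2\le\|W\phi\|_2+\|W\psi\|_2\). Expanding \(\|W\phi\|_2^2=\langle(W\circ W)\phi,\phi\rangle\) and applying the orthogonality with \(Q(x,y)=(W\circ W)(c(x),y)\) gives
\[
\|W\phi\|_2^2\le\|W\circ W-Q\|_1=\int_J r_{W\circ W}(x,c(x))\,dx\le\eps,
\]
the last inequality being precisely the average \(\eps\)-net hypothesis. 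The symmetric argument (with \(Q'(x,y)=(W\circ W)(x,c(y))\) and the symmetry of \(W\)) yields \(\|W\psi\|_2\le\sqrt\eps\), so \(|\int_{A\times B}(W-W_\PP)|\le 2\sqrt\eps\), which fits comfortably inside \(8\sqrt\eps\).

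For part (b): I would pick centers \(v_i\in V_i\) by an averaging argument. The crucial observation is that \(W_\PP\circ W_\PP\) is a stepfunction on \(\PP\times\PP\), so it is constant in the first coordinate within each class; hence for any \(x,v\in V_i\) the rows \((W_\PP\circ W_\PP)(x,\cdot)\) and \((W_\PP\circ W_\PP)(v,\cdot)\) agree pointwise, and the triangle inequality in \(L^1\) gives \(r_{W\circ W}(x,v)\le h(x)+h(v)\) where \(h(x):=\|(W\circ W)(x,\cdot)-(W_\PP\circ W_\PP)(x,\cdot)\|_1\). Choosing \(v_i\in V_i\) so that \(h(v_i)\le\mathrm{avg}_{V_i}h\) yields \(|V_i|h(v_i)\le\int_{V_i}h\), and summing,
\[
\int_J r_{W\circ W}(x,v_{S(x)})\,dx\le 2\int_Jh\,dx=2\|W\circ W-W_\PP\circ W_\PP\|_1.
\]
To bound the last \(L^1\) norm by a constant times \(\eps\), decompose \(W\circ W-W_\PP\circ W_\PP=W(W-W_\PP)+(W-W_\PP)W_\PP\), and for each factor estimate \(\|W\cdot U\|_1\le\|U\|_{\infty\to 1}\le 4\|U\|_\square\) by passing a \([-1,1]\)-valued test function through the \(W\)-factor to produce a bounded test function for \(U=W-W_\PP\); the regularity hypothesis \(\|W-W_\PP\|_\square\le\eps\) then yields the desired linear bound. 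The absolute constant can be tuned (via the symmetric form \(W\circ W-W_\PP\circ W_\PP=\tfrac12[(W+W_\PP)(W-W_\PP)+(W-W_\PP)(W+W_\PP)]\) or a sharpened symmetric \(\|\cdot\|_{\infty\to 1}\le C\|\cdot\|_\square\)) to recover the precise \(4\eps\) in the statement.

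The main technical obstacle in both parts is identifying the correct kernel to subtract: it must be a stepfunction in \emph{only one} coordinate, so that the mean-zero property of \(\phi\) kills it exactly once and the leftover is an operator square of \(W\) controlled by \(r_{W\circ W}\) rather than by \(r_W\). Were one to subtract a full stepfunction on \(\PP\times\PP\), the error would involve \(\|W-\text{stepfunction}\|_1\), which is generally large even for weak regularity partitions; the extra composition in \(W\circ W\) is precisely what makes the one-sided subtraction work and explains why the operator square, and not \(W\) itself, governs the metric on \(J\).
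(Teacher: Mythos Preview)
Your argument for (a) is correct and uses the same core device as the paper: subtract from $W\circ W$ a kernel that is a stepfunction in a single coordinate so that the mean-zero test function kills it. The paper reduces to the quadratic form $\langle f,(W-W_\PP)f\rangle$ and then polarizes (this is where the factor $8$ enters), whereas you work directly with the bilinear form $\int_{A\times B}(W-W_\PP)$ and hence obtain the sharper bound $\|W-W_\PP\|_\square\le 2\sqrt{\eps}$; the underlying computation $\|W\phi\|_2^2=\langle\phi,(W\circ W)\phi\rangle=\langle\phi,(W\circ W-Q)\phi\rangle\le\int r_{W\circ W}(x,c(x))\,dx$ is identical in both.

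For (b) your scheme is also essentially the paper's, but your choice of intermediate kernel costs a factor of $2$ that your proposed ``tuning'' does not recover. With $h(x)=\|(W\circ W)(x,\cdot)-(W_\PP\circ W_\PP)(x,\cdot)\|_1$ one indeed gets $\int r_{W\circ W}(x,v_{S(x)})\,dx\le 2\|W\circ W-W_\PP\circ W_\PP\|_1$, and fixing one variable and testing against $W(x,\cdot)\in[0,1]$ and a sign function gives $\|W\circ R\|_1,\ \|R\circ W_\PP\|_1\le 2\|R\|_\square$, hence $\|W\circ W-W_\PP\circ W_\PP\|_1\le 4\eps$ and only an $8\eps$-net. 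Neither the symmetric splitting $\tfrac12[(W+W_\PP)R+R(W+W_\PP)]$ nor sharpening $\|\cdot\|_{\infty\to1}\le C\|\cdot\|_\square$ removes this extra factor, because your intermediate $W_\PP\circ W_\PP$ is a \emph{two-sided} stepfunction and the decomposition genuinely has two comparable pieces. The paper instead uses the one-sided intermediate $W_\PP\circ W$: for $x,v$ in the same class one has $(W_\PP\circ W)(x,\cdot)=(W_\PP\circ W)(v,\cdot)$, so $r_{W\circ W}(x,v)\le F(x)+F(v)$ with $F(x)=\|(R\circ W)(x,\cdot)\|_1$, and $\int F\le 2\eps$ yields the stated $4\eps$. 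This is precisely the ``stepfunction in only one coordinate'' insight you emphasize at the end --- apply it here as well.
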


\begin{proof}
(a) Let $\PP$ be the partition into the Voronoi cells of $S$. Let us
write $R=W-W_\PP$. We want to show that $\|R\|_\square\le
8\sqrt{\eps}$. It suffices to show that for any 0-1 valued function
$f$,
\begin{equation}\label{F-GOAL}
\langle f, Rf \rangle\le 2\sqrt{\eps}.
\end{equation}
Let us write $g=f-f_\PP$, where $f_\PP(x)$ is obtained by replacing
$f(x)$ by the average of $f$ over the class of $\PP$ containing $x$.
Clearly $\langle f_\PP, Rf_\PP \rangle=0$, and so
\begin{equation}\label{EQ:FRF}
\langle f, Rf \rangle = \langle g, R f\rangle + \langle f_\PP, R
f\rangle = \langle f, R g\rangle + \langle f_\PP, R g\rangle \le 2\|R
g\|_1 \le 2\|R g\|_2.
\end{equation}
For each $x\in J$, let $\varphi(x)\in S$ be the center of the Voronoi
cell containing $x$, and define $W'(x,y)=W(x,\phi(y))$ and similarly
$R'(x,y)=R(x,\phi(y))$. Then using that $(W-R)g=W_\PP g=0$,
$W-W'=R-R'$ and $R'g=0$, we get
\begin{align*}
\|Rg\|_2^2&= \langle Rg, Rg \rangle = \langle Wg, (R-R')g\rangle
=\langle Wg, (W-W')g\rangle=\langle g, W(W-W')g\rangle\\
&\le \|W(W-W')\|_1 = \int\limits_{J^2} \Bigl|\int\limits_J
W(x,y)(W(y,z)-W(y,\varphi(z))\,dy\Bigr|\,dx\,dz \\
&= \int\limits_J  r_W(z,\varphi(z)) = \E_\Xb(r_W(\Xb,S)) \le \eps.
\end{align*}
This proves (\ref{F-GOAL}).

\medskip

(b) Suppose that $\PP$ is a weak Szemer\'edi partition with error
$\eps$. Let $R=W-W_\PP$, then we know that $\|R\|_\square \le \eps$.

For every $x\in[0,1]$, define
\[
F(x)=\int\limits_J \Bigl|\int\limits_J  R(x,y)W(y,z)\,dy\Bigr|\,dz.
\]
Then we have
\[
\int\limits_J  F(x)\,dx = \int\limits_{J^3}  s(x,z)
R(x,y)W(y,z)\,dx\,dy\,dz,
\]
where $s(x,z)$ is the sign of $\int R(x,y)W(y,z)$. For every $z\in
J$,
\[
\int\limits_{J^2} s(x,z) R(x,y)W(y,z)\,dx\,dy \le 2\|R\|_\square \le
2\eps,
\]
and hence
\begin{equation}\label{EQ:F-INT}
\int\limits_J F(x)\,dx \le 2\eps.
\end{equation}

Let $x,y\in J$ be two points in the same partition class of $\PP$.
Then $W_\PP(x,s)=W_\PP(y,s)$ for every $s\in J$, and hence
\begin{align*}
r_W(x,y) &= \int\limits_J \Bigl|\int\limits_J
(W(x,s)-W(y,s))W(s,z)\,ds\Bigr|\,dz\\
&=\int\limits_J \Bigl|\int\limits_J
(R(x,s)-R(y,s))W(s,z)\,ds\Bigr|\,dz\\
&\le \int\limits_J \Bigl|\int\limits_J  R(x,s)W(s,z)\,ds\Bigr|\,dz+
\int\limits_J \Bigl|\int\limits_J
R(y,s)W(s,z)\,ds\Bigr|\,dz\\
&=F(x)+F(y).
\end{align*}
For every set $T\in\PP$, let $v_T\in T$ be a point ``below average''
in the sense that
\[
F(v_T)\le \frac1{\pi(T)}\int\limits_T F(x)\,dx,
\]
and let $S=\{v_T:~T\in\PP\}$. Then using \eqref{EQ:F-INT},
\begin{align*}
\E_\Xb d(\Xb,S) &\le \sum_{T\in\PP} \int\limits_T d(x,v_T)\,dx \le
\sum_{T\in\PP} \int\limits_T (F(x)+F(v_T))\,dx\\
&\le \int\limits_J  F(x)\,dx +\sum_{T\in\PP} \lambda(T) F(v_T) \le
2\int\limits_J F(x)\,dx\le 4\eps.
\end{align*}
This proves the Theorem.
\end{proof}

Theorems \ref{THM:VORONOI} and \ref{THM:FINDIM} imply the following
Corollary (we prove a stronger result in the next section).

\begin{corollary}\label{COR:REG-PART}
For every bigraph $F=(V,E)$ there is a constant $c_F>0$ such that if
$G$ is a graph not containing $F$ as an induced sub-bigraph, then for
every $\eps>0$, $G$ has a weak regularity partition with error $\eps$
with at most $c_F\eps^{-10|V|}$ classes.
\end{corollary}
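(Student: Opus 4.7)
The plan is to combine Theorem \ref{THM:FINDIM} (compactness and finite packing dimension of the underlying metric space) with Theorem \ref{THM:VORONOI}(a) (Voronoi cells yield weak regularity partitions).

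First I associate to $G$ its standard graphon $W_G$ and pass, via Theorem \ref{THM:PURE}, to a weakly isomorphic pure graphon $(J,W)$. Since the property of avoiding $F$ as an induced sub-bigraph is hereditary and contains $G$, Proposition \ref{PROP:HERED-THIN} ensures that the bigraphon $(J,J,W)$ is thin; if necessary, I replace $F$ by an auxiliary bigraph $F'$ of size $O(|V|)$ (for instance, one that is twin-free on each side) to rule out measure-zero ``diagonal'' contributions to $\tbp_\ind$. Theorem \ref{THM:FINDIM} together with Remark \ref{REM:FINDIM} then yields that $(J,r_W)$ is compact with packing dimension bounded by a constant $d=d(|V|)$. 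In particular, a maximal $\delta$-separated subset $S\subseteq J$ has $|S|\le C(|V|)\,\delta^{-d}$ and, by maximality, is a $\delta$-cover of $J$.

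Next I transfer to the coarser metric demanded by the Voronoi argument. By the contraction inequality \eqref{EQ:CONTRACT}, $S$ is also a $\delta$-net, hence an average $\delta$-net, in $(J,r_{W\circ W})$. Setting $\delta=\eps^2/64$ and invoking Theorem \ref{THM:VORONOI}(a), the Voronoi cells of $S$ form a weak regularity partition of $(J,W)$ with cut-norm error at most $8\sqrt{\delta}=\eps$ and with at most $C(|V|)\cdot 64^d\cdot \eps^{-2d}$ classes. Pulling this partition back along the representation $V(G)\to[0,1]\to J$ (each partition class of $J$ is measurable, so its preimage carves $V(G)$ into a coarsening of the original vertex classes) gives a weak regularity partition of $G$ with the same number of parts and the same cut-norm error.

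The structural steps above are essentially forced by the tools already assembled. The main obstacle is bookkeeping the exponent: the bound from Remark \ref{REM:FINDIM} gives a packing dimension proportional to $|V|$, and the $\sqrt{\cdot}$-loss of Theorem \ref{THM:VORONOI}(a) doubles this in the final exponent. To match the claimed $\eps^{-10|V|}$, one must either trace the packing-dimension estimate sharply (using $k_2\le |V|$ in the proof of Theorem \ref{THM:FINDIM}(c) and keeping only the leading $\eps^{-d}$ behavior from Lemma \ref{LEM:VC} rather than the explicit $\eps^{-20k_2}$ form), absorb the remaining polynomial and constant slack into $c_F$, or invoke the improved regularity-to-covering correspondence that the authors announce for the next section.
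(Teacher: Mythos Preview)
Your approach is exactly the paper's: the authors give no argument beyond stating that Theorems~\ref{THM:VORONOI} and~\ref{THM:FINDIM} imply the corollary, and the chain you spell out---pass to a pure graphon, observe thinness, extract the packing-dimension bound, build a $\delta$-net in $(J,r_W)$, transfer it to $(J,r_{W\circ W})$ via~\eqref{EQ:CONTRACT}, then apply Theorem~\ref{THM:VORONOI}(a)---is precisely the intended one. Two minor points: the appeal to Proposition~\ref{PROP:HERED-THIN} is an unnecessary detour, since $\tbp_\ind(F,\cdot)$ is a weak-isomorphism invariant and hence vanishes on the pure representative once it vanishes on $W_G$; and your worry about the exponent is justified, as the paper is loose with these constants throughout (compare Remark~\ref{REM:FINDIM} with the $\eps^{-20k_2}$ actually derived at the end of the proof of Theorem~\ref{THM:FINDIM}(c)), so the $10|V|$ in the statement should be read as ``some fixed multiple of $|V|$'' with the slack absorbed into~$c_F$.
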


\begin{remark}\label{REM:NONBIP}
The conclusion does not remain true if the subgraph we exclude is
nonbipartite. Any bipartite graph will then satisfy the condition,
and some bipartite graphs are known to need an exponential (in
$1/\eps$) number of classes in their weak regularity partitions.
\end{remark}

\subsection{Edit distance}

We conclude with deriving bounds on the size of the Szemer\'edi
partitions and approximations in $L_1$, using the packing dimension
of $(J, r_W)$. In the graph theoretic case, this corresponds to
approximation in edit distance.

\begin{lemma}\label{unbalanced}
Let $W$ be a graphon such that $(J,r_W)$ can be covered by $m$ balls
of radius $\eps$. Then there is a stepfunction $U$ with $m(1/\eps)^m$
steps such that $\|W-U\|_1\leq 2\eps$.
\end{lemma}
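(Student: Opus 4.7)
The plan is to approximate $W$ in two stages: first discretize the rows by replacing each $W(x,\cdot)$ with one of $m$ representative rows obtained from the cover, then discretize the values appearing in these $m$ rows via a single grid on $[0,1]^m$.

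For the row stage, let $B_1,\dots,B_m$ be balls of radius $\eps$ in $(J,r_W)$ covering $J$, with centers $x_1,\dots,x_m$, and set $J_i=B_i\setminus\bigcup_{j<i}B_j$, so that $r_W(x,x_i)\le\eps$ for every $x\in J_i$. Define $W_1(x,y)=W(x_i,y)$ whenever $x\in J_i$. Fubini gives
\[
\|W-W_1\|_1=\sum_{i=1}^m\int_{J_i}r_W(x,x_i)\,dx\le\eps.
\]
For the value stage, consider the map $\Phi\colon J\to[0,1]^m$ with $\Phi(y)=(W(x_1,y),\dots,W(x_m,y))$, together with the standard half-open grid of cubes of side $\eps$ on $[0,1]^m$. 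Let $\QQ$ be the pullback partition of $J$; it has at most $(1/\eps)^m$ classes. For $x\in J_i$ and $y$ in the $\QQ$-class indexed by $(k_1,\dots,k_m)$, put $U(x,y)=k_i\eps$. Then $|W(x_i,y)-U(x,y)|<\eps$ pointwise, so $\|W_1-U\|_1<\eps$, and the triangle inequality yields $\|W-U\|_1\le 2\eps$.

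It remains to observe that $U(x,y)$ depends on $x$ only through its $\{J_i\}$-class and on $y$ only through its $\QQ$-class, hence $U$ is constant on each product cell of the common refinement $\PP$ of $\{J_1,\dots,J_m\}$ and $\QQ$, and $|\PP|\le m\cdot(1/\eps)^m$. The one point worth underlining (the nearest thing to an obstacle) is precisely this packaging step: the row discretization and the value discretization a priori give \emph{two different} partitions of $J$ (one controlling the $x$-variable, one the $y$-variable), and they must be merged into a single partition of $J$ to match the one-partition definition of a stepfunction. That merging contributes exactly the extra factor of $m$ outside $(1/\eps)^m$ and is the only source of the asymmetry in the bound.
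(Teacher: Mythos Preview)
Your proof is correct and follows essentially the same two-stage approach as the paper: first replace each row $W(x,\cdot)$ by the nearby representative $W(x_i,\cdot)$ (giving $\|W-W_1\|_1\le\eps$), then discretize the values of the $m$ representative functions to within $\eps$. The only cosmetic difference is in the packaging of the second stage: the paper builds $m$ separate partitions $\QQ_i$ (each with $1/\eps$ classes on which $W(x_i,\cdot)$ varies by at most $\eps$) and takes their common refinement with $\PP$, whereas you pull back the $\eps$-grid on $[0,1]^m$ via $\Phi(y)=(W(x_1,y),\dots,W(x_m,y))$ in one shot; these constructions coincide, and both yield the bound $m(1/\eps)^m$ on the number of steps.
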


\begin{remark}\label{01}
If $W$ is $0$-$1$ valued, then the bound on the number of classes can
be improved to $m2^m$.
\end{remark}

\begin{proof}
Let $\PP=\{J_1,J_2,\dots,J_m\}$ be a partition of $J$ into measurable
sets such that for every $i$ there is $x_i\in J$ with
$\|W(x_i,.)-W(x,.)\|_1\leq\eps$ for every $x\in J_i$. Let
$W'(x,y)=W(x_i,y)$ for $x\in J_i$, then trivially
$\|W-W'\|_1\le\eps$. Let $\QQ_i$ be a partition of $J$ into $1/\eps$
measurable classes so that $W(x_i,.)$ varies at most $\eps$ on each
class of $\QQ_i$. For $x\in J_i$ and $y\in S\in\QQ_i$, define
\[
U(x,y)=\frac1{\pi(S)} \int_S W'(x,z)\,dz.
\]
Then clearly $|U(x,y)-W'(x,y)|\le\eps$ for all $x,y\in J$, and hence
$\|U-W\|_1 \le \|U-W'\|_1 + \|W-W'\|_1\le 2\eps$. It is obvious that
$U$ is a stepfunction in the partition generated by $\PP$ and
$\QQ_1,\dots,\QQ_m$, which has at most $m(1/\eps)^m$ classes.
\end{proof}

We obtain from this lemma:

\begin{theorem}\label{THM:SZEMER}
Let $W$ be a graphon such that $(J,r_W)$ has packing dimension $d$,
then for every $0<\eps<1$ it has an ultra-strong partition with error
$\eps$ and with at most $\eps^{-O(\eps^{-d})}$ classes.
\end{theorem}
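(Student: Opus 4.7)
The plan is to apply Lemma \ref{unbalanced} together with the covering bound supplied by finite packing dimension, and then pass from the stepfunction approximation produced by the lemma to the canonical block-averaging $W_\PP$ required by the definition of ultra-strong partition, losing at most a factor $2$ in the $L_1$ error.

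First, by the equivalent formulation mentioned after the definition of packing dimension, a space of packing dimension $d$ can be covered, for each $\eps'>0$, by at most $C(\eps')^{-d}$ balls of radius $\eps'$ in the $r_W$ metric, for some constant $C$ depending only on the space (here one uses the monotonicity of the covering number to handle the transition scale). Taking $\eps' = \eps/4$ and applying Lemma \ref{unbalanced} then yields a stepfunction $U$ whose step partition $\PP$ has at most $m(4/\eps)^m$ classes with $m = C(\eps/4)^{-d}$, and satisfies $\|W-U\|_1 \le \eps/2$. Elementary bookkeeping gives $m(4/\eps)^m = \eps^{-O(\eps^{-d})}$.

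The second step is to replace $U$ by $W_\PP$ without losing too much. I would establish the following inequality, valid for any $\PP$-stepfunction $U$:
\[
\|U - W_\PP\|_1 \;\le\; \|W - U\|_1.
\]
On each product block $S_i \times S_j$, the functions $U$ and $W_\PP$ are the constants $u_{ij}$ and $\overline W_{ij} := \frac{1}{\pi(S_i)\pi(S_j)}\int_{S_i\times S_j} W$ respectively, so the one-line Jensen computation
\[
|u_{ij} - \overline W_{ij}| \;=\; \Bigl|\frac{1}{\pi(S_i)\pi(S_j)}\int_{S_i\times S_j}(u_{ij}-W)\Bigr| \;\le\; \frac{1}{\pi(S_i)\pi(S_j)}\int_{S_i\times S_j}|u_{ij}-W|
\]
gives the block-wise estimate, and summing over $(i,j)$ weighted by $\pi(S_i)\pi(S_j)$ proves the claim. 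The triangle inequality then yields
\[
\|W - W_\PP\|_1 \;\le\; \|W-U\|_1 + \|U - W_\PP\|_1 \;\le\; 2\|W-U\|_1 \;\le\; \eps,
\]
so $\PP$ is an ultra-strong regularity partition of $(J,W)$ with error $\eps$ and at most $\eps^{-O(\eps^{-d})}$ classes.

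The argument is essentially mechanical once Lemma \ref{unbalanced} is available, so I do not expect a substantive obstacle. The only point worth a moment's thought is that the averaging operator $W \mapsto W_\PP$ is automatically a $2$-approximation to the best $\PP$-stepfunction approximation in $L_1$; this is what lets us upgrade the \emph{existence} statement of Lemma \ref{unbalanced} to the \emph{canonical} partition-averaging statement required by the definition of an ultra-strong partition.
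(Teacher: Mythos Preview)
Your proof is correct and follows essentially the same route as the paper: cover $(J,r_W)$ by $m=O(\eps^{-d})$ balls of radius $\eps/4$, invoke Lemma \ref{unbalanced} to get a stepfunction $U$ with $m(4/\eps)^m=\eps^{-O(\eps^{-d})}$ steps and $\|W-U\|_1\le\eps/2$, and then pass to $W_\PP$ via $\|W-W_\PP\|_1\le 2\|W-U\|_1$. Your Jensen-type argument is exactly the ``easy computation'' the paper alludes to but does not spell out.
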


\begin{proof}
Consider a maximal packing in $(J,r_W)$ of balls with radius
$\eps/8$; this consists of $m=O(\eps^{-d})$ balls. The balls with the
same centers and with radius $\eps/4$ cover $J$, so Lemma
\ref{unbalanced} there is a stepfunction $U$ with $m(4/\eps)^m\le
\eps^{-c\eps^{-d}}$ steps such that $\|W-U\|_1\leq \eps/2$. For the
partition $\PP$ into the steps of $U$, we have
\[
\|W-W_\PP\|_1\le 2\|W-U\|_1\le\eps
\]
(the first inequality follows by easy computation).
\end{proof}

For thin graphons, we get a stronger bound.

\begin{theorem}\label{THM:THIN-ULTRA}
Let $W$ be a thin graphon in which a bigraph $F=(V,E)$ is excluded as
an induced sub-bigraph. Then for every $0<\eps<1$, it has an
ultra-strong partition with error $\eps$ and with $O(\eps^{-10
|V|^2})$ classes.
\end{theorem}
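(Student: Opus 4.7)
The plan is to combine the polynomial packing-dimension bound for $(J,r_W)$ coming from Theorem~\ref{THM:FINDIM} with a direct $L_1$-approximation by the product-averaged stepfunction on a Voronoi partition of $J$, thereby avoiding the $2^m$ blow-up inherent in Lemma~\ref{unbalanced}. First I would apply Theorem~\ref{THM:FINDIM} and Remark~\ref{REM:FINDIM} to $W$, regarded as a bigraphon on $J\times J$. Since $W$ is thin with excluded bigraph $F$, $W\in\{0,1\}$ almost everywhere, and the proof of Theorem~\ref{THM:FINDIM}(c) together with Lemma~\ref{LEM:VC} bounds the packing dimension of $(J,r_W)$ by $20|V|$. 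Concretely, a maximal $\delta$-packing in $(J,r_W)$ has at most $C(|V|)\,\delta^{-20|V|}$ points, and its Voronoi cells yield a partition $\PP=\{J_1,\dots,J_m\}$ of $J$ with $m\le C(|V|)\,\delta^{-20|V|}$ and $r_W$-diameter at most $2\delta$ on each class.

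Next I would take $\delta=\eps/4$ and let $W_\PP$ be the averaged stepfunction, with value $\bar W_{ij}=\E_{(x',y')\in J_i\times J_j}W(x',y')$ on $J_i\times J_j$. The target estimate is $\|W-W_\PP\|_1\le\eps$. Jensen's inequality gives
\[
|W(x,y)-\bar W_{ij}|\le\E_{x'\in J_i,\,y'\in J_j}|W(x,y)-W(x',y')|,
\]
and splitting the right-hand side by the triangle inequality into $|W(x,y)-W(x',y)|+|W(x',y)-W(x',y')|$, one integrates over $J_i\times J_j$ and sums over $i,j$. The row term collapses to $\sum_i\tfrac{1}{|J_i|}\int_{J_i\times J_i}r_W(x,x')\,dx\,dx'\le\eps/2$, and the column term collapses to the same quantity, using the symmetry of $W$ (which makes the bigraphon column distance equal to $r_W$). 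Adding the two contributions yields $\|W-W_\PP\|_1\le\eps$.

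Finally, $\PP$ is an ultra-strong regularity partition of $W$ with error $\eps$ and at most $C'(|V|)\eps^{-20|V|}=O(\eps^{-10|V|^2})$ classes, using $20|V|\le 10|V|^2$ for $|V|\ge 2$ (the cases $|V|\le 1$ being trivial) and absorbing the $|V|$-dependent constant into the $O$-notation.

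The main subtlety will be the averaging step: a direct appeal to Lemma~\ref{unbalanced} would refine each row-cell by its own column partition, producing a factor of $2^m$ (or $(1/\eps)^m$) in the total number of cells and thus destroying the polynomial-in-$1/\eps$ bound. The crucial observation is that for a symmetric graphon a single $r_W$-based partition of $J$ simultaneously controls both row and column variation within every product cell $J_i\times J_j$, so the untwisted stepfunction $W_\PP$ already delivers the ultra-strong approximation with only $m$ classes of $J$.
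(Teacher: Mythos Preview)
Your argument is correct and in fact sharper than the paper's. Both proofs start the same way: apply Theorem~\ref{THM:FINDIM} (with Remark~\ref{REM:FINDIM} and Lemma~\ref{LEM:VC}) to get a Voronoi partition $\PP=\{J_1,\dots,J_m\}$ of $(J,r_W)$ into $m=O(\eps^{-c|V|})$ cells of small $r_W$-diameter. From here the two proofs diverge.

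The paper follows the template of Lemma~\ref{unbalanced}: it uses the \emph{one-sided} approximation $W'(x,y)=W(x_i,y)$ for $x\in J_i$, which is a stepfunction in $x$ but not in $y$, and then refines in the $y$-direction by the atoms of the Boolean algebra generated by the supports $S_i=\supp W(x_i,\cdot)$. The $0$--$1$ valuedness of $W$ and the excluded bigraph $F$ bound the VC-dimension of $\{S_i\}$ by $|V|$, so Sauer--Shelah (Lemma~\ref{LEM:SaSh}) gives at most $O(m^{|V|-1})$ atoms; the common refinement has $O(m^{|V|})=O(\eps^{-10|V|^2})$ classes.

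You instead keep the single partition $\PP$ and take the \emph{symmetric} cell average $W_\PP$. The triangle-inequality/Jensen computation you sketch is valid and gives $\|W-W_\PP\|_1\le 4\delta=\eps$ directly, using only that the row and column $r_W$-diameters of each $J_i$ are at most $2\delta$ (the symmetry of $W$ makes the column distance coincide with $r_W$). No second refinement, no second invocation of Sauer--Shelah, and the $0$--$1$ valuedness of $W$ is not even needed for this step. The payoff is a strictly smaller exponent: your bound is $O(\eps^{-20|V|})$ (or $O(\eps^{-10|V|})$ if one accepts the $10|V|$ packing-dimension statement in Remark~\ref{REM:FINDIM}), comfortably inside the claimed $O(\eps^{-10|V|^2})$ for $|V|\ge 2$. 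The paper's route, on the other hand, exploits the finite-VC structure of the row supports more explicitly and would generalize to asymmetric bigraphons where your symmetry trick is unavailable.
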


\begin{proof}
Theorem \ref{THM:FINDIM} implies that $W$ is $0$-$1$ valued and it
has finite packing dimension at most $10|V|$. Similarly to the proof
of lemma \ref{unbalanced}, let $\PP=\{J_1,J_2,\dots,J_m\}$ be a
partition of $J$ with $m=O(\eps^{-|V|})$ into measurable sets such
that for every $i$ there is an $x_i\in J$ with
$\|W(x_i,.)-W(x,.)\|_1\leq\eps$ for every $x\in J_i$. Let
$W'(x,y)=W(x_i,y)$ for $x\in J_i$, then $\|W'-W\|_1\leq\eps$. Let
$S_i$ be the support of the function $W(x_i,.)$, and let $A$ be the
set of atoms of the Boolean algebra generated by
$\{S_1,S_2,\dots,S_m\}$ with positive measure. For every atom $a\in
A$, let $F_a\subseteq [m]$ denote the index set $\{i|a\subseteq
S_i\}$ and let $\mathcal{F}$ denote the set system $\{F_a|a\in A\}$.
Since $F$ is not an induced sub-bigraph, $\mathcal{F}$ has
VC-dimension less than $|V|$, and so by lemma \ref{LEM:SaSh} we
obtain that $|A|\leq O(m^{|V|-1})$. The joint refinement
$\mathcal{P}_2$ of $A$ and $\PP$ is of size at most
$O(\eps^{-10|F|^2})$. This completes the proof since $W'$ is a
stepfunction with steps in $\mathcal{P}_2$.
\end{proof}

It is easy to see that in the definition of ultra-strong regularity
partitions of $0$-$1$ valued graphons, we can replace $W_\PP$ by a
$0$-$1$ valued stepfunction with the same steps, at the cost of
doubling the error. Together with Remark \ref{REM:BALANCED}, we can
apply this to a (large) finite graph $G$. To state the result, we
need a definition. Let $H$ be a simple graph, and let us replace each
node $v$ of $H$ by a set $S_v$ of ``twin'' nodes, where two nodes
$x\in S_u$ and $y\in S_v$ are connected if and only if $uv\in E(H)$.
For each $u\in V(H)$, either connect all pairs of nodes in $S_u$, or
none of them. We call every graph obtained this way a {\it blow-up}
of $H$.

\begin{corollary}\label{COR:L1}
For every bigraph $F$ there is a constant $c_F>0$ such that if $G$ is
a graph not containing $F$ as an induced sub-bigraph, then for every
$\eps>0$, we can change $\eps|G|^2$ edges of $G$ so that the
resulting graph is a blow-up of a graph with at most
$c_F\eps^{-10|F|^2}$ nodes.
\end{corollary}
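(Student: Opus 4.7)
The plan is to translate Theorem~\ref{THM:THIN-ULTRA} from graphons to the finite graph $G$ via the standard graphon $W_G$ associated with $G$. First I would verify the hypothesis: since $G$ does not contain $F$ as an induced sub-bigraph, $\tbp_\ind(F,W_G)=0$, so $W_G$ is thin (and $0$-$1$ valued by construction). Applying Theorem~\ref{THM:THIN-ULTRA} with error $\eps/c_1$ for a suitable absolute constant $c_1$ produces an ultra-strong regularity partition $\PP$ of $W_G$ with at most $c_F'\eps^{-10|F|^2}$ classes and $\|W_G-(W_G)_\PP\|_1\le\eps/4$.

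Next I would use the observation stated immediately before the corollary to replace $(W_G)_\PP$ by a $0$-$1$ valued stepfunction $U$ on the same steps, doubling the error to $\|W_G-U\|_1\le\eps/2$. Then I would invoke Remark~\ref{REM:BALANCED}(ii) to repartition into classes of equal measure, doubling the error once more to $\eps$ and multiplying the class count by at most $\lceil 1/\eps\rceil$; this extra factor is absorbed into the final constant $c_F$ (possibly at the cost of a negligible bump in the exponent, which can be reinstated by tightening $c_1$). The outcome is a $0$-$1$ valued stepfunction $U'$ on equal-measure steps $S_1,\dots,S_{k'}$ with $k'\le c_F\eps^{-10|F|^2}$ and $\|W_G-U'\|_1\le\eps$.

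Translating back to $G$ is then routine. Refining the equitable partition so that each $S_i$ is a union of the equal intervals $L_v$ ($v\in V(G)$) induces an equitable partition $\{T_1,\dots,T_{k'}\}$ of $V(G)$. Since $U'$ is constant on each block $S_i\times S_j$, the $0$-$1$ graph $\tilde G$ on $V(G)$ that $U'$ defines joins every pair in $T_i\times T_j$ ($i\ne j$) uniformly and joins either all or no pairs inside each $T_i$; this is by definition a blow-up of the graph $H$ on $[k']$ with $ij\in E(H)$ iff $U'\equiv 1$ on $S_i\times S_j$, the clique/independent-set choice for each $T_i$ being read from $U'$ on $S_i\times S_i$. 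Finally, the identity $\|W_G-W_{\tilde G}\|_1 = 2|E(G)\triangle E(\tilde G)|/|V(G)|^2\le\eps$ gives the required bound on the number of modified edges.

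The hardest step will be the bookkeeping with constants across the three approximation stages---$0$-$1$ rounding, equitability, and $V(G)$-alignment---so that the final exponent really matches the announced $10|F|^2$ rather than $10|F|^2+1$. All the substantive combinatorial content is already contained in Theorem~\ref{THM:THIN-ULTRA}, and everything else is careful constant-chasing.
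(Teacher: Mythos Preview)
Your proposal is correct and mirrors exactly what the paper does: it states the corollary as a direct consequence of Theorem~\ref{THM:THIN-ULTRA}, the $0$-$1$ rounding observation immediately preceding the corollary, and Remark~\ref{REM:BALANCED}, applied to the graphon associated with $G$. The paper gives no further detail than the one sentence ``Together with Remark~\ref{REM:BALANCED}, we can apply this to a (large) finite graph $G$,'' so your write-up is in fact more explicit than the original; note, incidentally, that since the definition of a blow-up does not require the parts $S_v$ to have equal size, the equitability step (and hence the extra $\lceil 1/\eps\rceil$ factor you worry about) is not actually needed---working directly with $G$ as a graphon on the finite probability space $V(G)$ makes the partition automatically a vertex partition and keeps the exponent at $10|F|^2$.
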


Let us say that a graphon $W$ has {\it polynomial $L_1$-complexity}
if there is a $d>0$ such that for every $\eps>0$ there is a
stepfunction $W'$ with $O(\eps^{-d})$ steps satisfying
$\|W-W'\|_1\leq\eps$. We can define  {\it polynomial
$\square$-complexity} analogously. As we have pointed out, polynomial
$\square$-complexity corresponds to the finite dimensionality of the
metric space of $W\circ W$. Theorem \ref{THM:THIN-ULTRA} implies that
every thin graphon has polynomial $L_1$-complexity.

If $W$ has polynomial complexity, then the structure of $W$ can be
described by a polynomial number (in $1/\eps$) of real parameters
with an error $\eps$ in the appropriate norm. The set of graphons
with polynomial complexity is closed under many natural operations
such as operator product, tensor product, etc.

It could be interesting to study other aspects of this complexity
notion. We offer a conjecture relating our complexity notion to
extremal combinatorics. It is supported by examples in \cite{LSz6}.

\begin{conj}
Let $F_1,F_2,\dots,F_n$ be a set of finite graphs,
$t_1,t_2,\dots,t_m$ be real numbers in $[0,1]$ and $\mathcal{S}$ be
the set of graphons $W$ with $t(F_i,W)=t_i$ for $1\leq i\leq n$. Then
$\mathcal{S}$ is either empty or it contains a graphon of polynomial
$L_1$-complexity.
\end{conj}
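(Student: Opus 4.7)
The plan is to extract a canonical element of $\mathcal{S}$ by convex optimization. Since each functional $W \mapsto t(F_i,W)$ is continuous in the cut distance, $\mathcal{S}$ is a closed, hence compact, subset of the graphon space. Take $W^{*} \in \mathcal{S}$ to be the minimizer of the strictly convex continuous functional $\Phi(W) = \tfrac{1}{2}\|W\|_2^{2}$; such a minimizer exists by compactness and is unique up to weak isomorphism.

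The next step is a KKT analysis at $W^{*}$. A standard variational argument produces multipliers $\lambda_1,\dots,\lambda_n$ such that, setting
\[
\Psi(a,b) \;=\; \sum_{i=1}^{n} \lambda_i \sum_{uv \in E(F_i)} t_{\{u,v\}}\bigl(F_i - uv,\, W^{*};\, a,b\bigr),
\]
one has $W^{*}(a,b) = \max\bigl(0,\min(1,\Psi(a,b))\bigr)$ for almost every $(a,b)$. The crucial observation is that in the graph $F_i - uv$ the pair $\{u,v\}$ is independent, so Lemma \ref{LEM:CONTIN} applies and each summand in $\Psi$ is Lipschitz in $(a,b)$ with respect to $r_{W^{*}}$ on each coordinate. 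Truncating to $[0,1]$ preserves Lipschitzness, so $W^{*}$ itself is Lipschitz as a map from $(J \times J, r_{W^{*}} \times r_{W^{*}})$ to $[0,1]$.

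Given this, a polynomial metric-entropy bound $N(\eps) \le C\eps^{-d}$ on $(J, r_{W^{*}})$ would suffice to conclude. Picking a representative $\phi(x)$ in the $\eps$-ball around $x$ for every $x$, the stepfunction $(x,y) \mapsto W^{*}(\phi(x),\phi(y))$ would approximate $W^{*}$ pointwise to within $O(\eps)$ on at most $C^{2}\eps^{-2d}$ steps, yielding polynomial $L_1$-complexity directly. In particular, if one could additionally arrange that $W^{*}$ be $0$-$1$-valued (by replacing $\Phi$ by an appropriate concave ``roughening'' functional), then $W^{*}$ would be thin as soon as the constraints exclude some induced sub-bigraph, and Theorem \ref{THM:THIN-ULTRA} would apply directly.

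The principal obstacle is precisely this missing estimate: turning the self-referential Lipschitz fixed-point equation for $W^{*}$ into a polynomial (not merely finite) bound on the metric entropy of $(J, r_{W^{*}})$. In Section \ref{THIN} this bound comes for free from the VC-dimension of the family of neighborhood supports, because an excluded induced sub-bigraph restricts that family combinatorially; here no such handle is available, and a priori the fixed-point equation could admit self-similar solutions whose metric entropy grows faster than any polynomial. A natural attack is to iterate the equation so that a large $\eps$-separated set in $J$ produces a separated family of Lipschitz functions $\Psi(a,\cdot)$, and then bound the metric entropy of this function family directly from the smoothness supplied by Lemma \ref{LEM:CONTIN}; making this quantitative, however, appears to need genuinely new input beyond the tools developed here, which is why the statement is advanced only as a conjecture.
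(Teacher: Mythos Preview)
The statement is a \emph{conjecture}; the paper offers no proof, so there is nothing to compare against. What you have written is a research strategy, and you correctly flag its central gap: the self-referential Lipschitz relation $W^{*}=\mathrm{clip}(\Psi)$ does not by itself control the metric entropy of $(J,r_{W^{*}})$. In fact the bootstrap is weaker than you indicate. If $W^{*}$ is $L$-Lipschitz in each variable with respect to $r_{W^{*}}$, then integrating $|W^{*}(a,b)-W^{*}(a',b)|\le L\,r_{W^{*}}(a,a')$ over $b$ yields $r_{W^{*}}(a,a')\le L\,r_{W^{*}}(a,a')$; hence either $L\ge 1$ and the inequality is vacuous, or $W^{*}$ is constant. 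So the Lipschitz constant alone carries no dimensional information, and any attack along these lines must exploit the specific polynomial structure of $\Psi$ rather than mere smoothness.

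There are also earlier steps you glide over. The functional $\Phi(W)=\tfrac12\|W\|_2^{2}$ is \emph{not} continuous in the cut metric (for a quasirandom sequence $\|W_n\|_2^{2}\to\tfrac12$ while the limit has $\|\tfrac12\|_2^{2}=\tfrac14$); it is only lower semicontinuous, which still yields a minimizer but deserves a word. Strict convexity on $L^{2}$ does not descend to the quotient by weak isomorphism, since the constraints $t(F_i,W)=t_i$ are not affine and convex combinations of feasible graphons need not be feasible, so your uniqueness claim is unjustified---though uniqueness is not actually needed downstream. More seriously, the KKT step requires a constraint qualification: without checking that the derivatives of the maps $W\mapsto t(F_i,W)$, together with the active box constraints $0\le W\le 1$, are suitably independent at $W^{*}$, the existence of multipliers $\lambda_i$ is not guaranteed. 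None of these is the decisive obstruction, but each would have to be handled before the outline could become a proof.
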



\begin{thebibliography}{99}

\bibitem{AFKS}
N.~Alon, E.~Fischer, M.~Krivelevich and M.~Szegedy: Efficient testing
of large graphs, {\it Combinatorica} {\bf 20} (2000), 451--476.

\bibitem{BCL}
C.~Borgs, J.~Chayes, L.~Lov\'asz: Moments of Two-Variable Functions
and the Uniqueness of Graph Limits, {\it Geometric and Functional
Analysis} (to appear)

\url{http://www.cs.elte.hu/~lovasz/limitunique.pdf}

\bibitem{BCLSV0}
C.~Borgs, J.~Chayes, L.~Lov\'asz, V.T.~S\'os, K.~Vesztergombi:
Counting graph homomorphisms, in: {\it Topics in Discrete
Mathematics} (ed. M.~Klazar, J.~Kratochvil, M.~Loebl,
J.~Matou\v{s}ek, R.~Thomas, P.~Valtr), Springer (2006), 315--371.

\bibitem{BCLSSV}
C.~Borgs, J.T.~Chayes, L.~Lov\'asz, V.T.~S\'os, B.~Szegedy and
K.~Vesztergombi: Graph Limits and Parameter Testing, Proc.\ 38th
Annual ACM Symp.\ on Theory of Computing 2006, 261--270.

\bibitem{BCLSV1}
C.~Borgs, J.T.~Chayes, L.~Lov\'asz, V.T.~S\'os, and K.~Vesztergombi:
Convergent Graph Sequences I: Subgraph frequencies, metric
properties, and testing, {\it Advances in Math.} (2008),
10.1016/j.aim.2008.07.008.

\bibitem{ELS}
P.~Erd\"os, L.~Lov\'asz, J.~Spencer: Strong independence of graphcopy
functions, in: {\it Graph Theory and Related Topics}, Academic Press,
165-172.

\bibitem{FLS}
M.~Freedman, L.~Lov\'asz, A.~Schrijver: Reflection positivity, rank
connectivity, and homomorphisms of graphs, {\it J.~Amer.\ Math.\
Soc.} {\bf 20} (2007), 37--51.

\bibitem{FK}
A.~Frieze and R.~Kannan: Quick approximation to matrices and
applications, {\it Combinatorica} {\bf 19}, 175--220.

\bibitem{KPW}
J.~Koml\'os, J.~Pach and G.~Woeginger: Almost Tight Bounds for
epsilon-Nets, {\it Discr.\ Comput.\ Geometry} {\bf 7} (1992),
163--173.

\bibitem{LSos}
L.~Lov\'asz, V.T.~S\'os: Generalized quasirandom graphs, {\it
J.~Comb.\ Th. B} {\bf 98} (2008), 146--163.

\bibitem{LSz1}
L.~Lov\'asz, B.~Szegedy: Limits of dense graph sequences, {\it
J.~Comb.\ Theory B} {\bf 96}  (2006), 933--957.

\bibitem{LSz3}
L.~Lov\'asz and B.~Szegedy: Szemer\'edi's Lemma for the analyst, {\it
Geom.\ Func.\ Anal.} {\bf 17} (2007), 252--270.

\bibitem{LSz6}
L.~Lov\'asz and B.~Szegedy: Finitely forcible graphons (submitted)

\url{http://arxiv.org/abs/0901.0929}

\bibitem{Szem1}
E.~Szemer\'edi: On sets of integers containing no $k$ elements in
arithmetic progression", {\it Acta Arithmetica} {\bf 27} (1975)
199-–245.

\bibitem{Szem2}
E.~Szemer\'edi: Regular partitions of graphs, {\it Colloque Inter.\
CNRS} (J.-C.~Bermond, J.-C.~Fournier, M.~Las Vergnas and D.~Sotteau,
eds.) (1978) 399--401.

\bibitem{VC}
V.~Vapnik, A.~Chervonenkis: On the uniform convergence of relative
frequencies of events to their probabilities, {\it Theor. Prob.
Appl.} {\bf 16} (1971), 264--280.
\end{thebibliography}
\end{document}